\providecommand{\BBb}[1]{{\mathbb{#1}}}
\providecommand{\cal}[1]{{\mathcal{#1}}}   % visse (nyere) versioner har \cal
\newcommand{\ang}[1]{\langle#1\rangle}
\newcommand{\bignt}[1]{\lfloor#1\rfloor}
\newcommand{\Bcirc}{\overset{\lower 1.5pt%
              \hbox{$@,@,@,@,@,\scriptscriptstyle\circ$}}B{}}
\newcommand{\Binfty}{\overset{\lower 1.5pt%
              \hbox{$@,@,@,@,@,\scriptscriptstyle\infty$}}B{}}
\newcommand{\bigdot}{\mathbin{\raise.65\jot\hbox{$\scriptscriptstyle\bullet$}}}
\newcommand{\C}{{\BBb C}}
\newcommand{\Dm}{\BBb D}
\newcommand{\uDm}{\underline{\BBb D}}
\newcommand{\dual}[2]{\langle\,#1,\,#2\,\rangle}
\newcommand{\erd}{\overset{\lower 1pt\hbox{\large.}}{e}
                  \overset{\lower 1pt\hbox{\large.}}{r}}
\newcommand{\Fcirc}{\overset{\lower 1.5pt%
               \hbox{$@,@,@,@,@,\scriptscriptstyle\circ$}}F{}}
\newcommand{\fracc}[2]{{
                \textstyle\frac{#1}{\raise 1pt\hbox{$\scriptstyle #2$}}}}
\newcommand{\fracp}{\fracc1p}
\newcommand{\fracci}[2]{{\frac{#1}{\raise 1pt\hbox{$\scriptscriptstyle #2$}}}}
\newcommand{\fracpi}{\fracci1p}
\newcommand{\grad}{\operatorname{grad}}
\newcommand{\kd}[1]{\boldsymbol{[\![}{#1}\boldsymbol{]\!]}}
\newcommand{\lap}{\operatorname{\Delta}}
\newcommand{\M}{{\scriptstyle M}}
\newcommand{\Mi}{{\scriptscriptstyle M}}
\newcommand{\norm}[2]{\mathinner{\|}#1\,|#2\|}
\newcommand{\Norm}[2]{\mathinner{\bigl\|\,#1\,\big|#2\bigr\|}}
\newcommand{\op}[1]{\operatorname{#1}}
\newcommand{\N}{\BBb N}
\newcommand{\Pl}{\BBb P}
\newcommand{\R}{{\BBb R}}
\newcommand{\Rn}{{\BBb R}^{n}}
\providecommand{\rom}[1]{\upn{#1}}
\newcounter{enmcount}\renewcommand{\theenmcount}{{\rm\arabic{enmcount}}}
\renewenvironment{enumerate}{%
\begin{list}{{\llap{\rm(\theenmcount)}}}{\setlength{\labelwidth}{\leftmargin}%
\usecounter{enmcount}}}{\end{list}}
\newcounter{rmcount}\renewcommand{\thermcount}{{\rm\roman{rmcount}}}
\newcounter{Rmcount}\renewcommand{\theRmcount}{{\rm\Roman{Rmcount}}}
\newcommand{\smlnt}[1]{\lceil#1\rceil}
\newcommand{\supp}{\operatorname{supp}}
\newcommand{\Z}{\BBb Z}
\renewcommand{\check}[1]{\overset{{\scriptscriptstyle \vee}}{#1}}
\renewcommand{\hat}[1]{\overset{{\scriptscriptstyle \wedge}}{#1}}
\numberwithin{equation}{section}
\newtheorem{thm}{Theorem}
\numberwithin{thm}{section}
\newtheorem{prop}[thm]{Proposition}
\newtheorem{lem}[thm]{Lemma}
\newtheorem{cor}[thm]{Corollary}
\theoremstyle{definition}
\newtheorem{defn}[thm]{Definition}
\newtheorem{exmp}[thm]{Example}
\theoremstyle{remark}
\newtheorem{rem}[thm]{Remark}
\title[Pointwise multiplication]{Pointwise multiplication\\ of Besov and Triebel--Lizorkin spaces}
\author[Johnsen]{Jon Johnsen}
\address{Mathematical Institute\\University of Copenhagen\\%
Universitetsparken 5\\DK-2100 Copenhagen O\\Denmark}
\email{jjohnsen@math.ku.dk}
\begin{document}
\newcommand{\name}[1]{{#1\/}}           % Her var et \sc indsat aht. Math.Nachr.

%\vspace{50mm}
%\input{mlt1a}        % Abstract and Introduction. 2017: indsat

\begin{abstract}
It is shown that {\em para-multiplication\/} applies to a certain product
$\pi(u,v)$ defined for appropriate $u$ and $v\in\cal S'(\Rn)$.
Boundedness of $\pi(\cdot,\cdot)$ is investigated
for the anisotropic Besov and
Triebel--Lizorkin spaces\,---\,i.e., for $B^{\Mi,s}_{p,q}$ and
$F^{\Mi,s}_{p,q}$ with $s\in\R$ and $p$ and $q\in\,]0,\infty]$ (though
$p<\infty$ in the $F$-case)\,---\,with a treatment of the generic as well
as various borderline cases.

When $\max(s_0,s_1)>0$ the spaces $B^{\Mi,s_0}_{p_0,q_0}\oplus 
B^{\Mi,s_1}_{p_1,q_1}$ and $F^{\Mi,s_0}_{p_0,q_0}\oplus
F^{\Mi,s_1}_{p_1,q_1}$ to which $\pi(\cdot,\cdot)$ applies are
determined. For generic $F^{s_0}_{p_0,q_0}\oplus F^{s_1}_{p_1,q_1}$
the receiving $F^{s}_{p,q}$ spaces are characterised.

It is proved that $\pi(f,g)=f\cdot g$ holds for functions
$f$  and $g$ when $f\cdot g\in L_{1,\op{loc}}$, roughly speaking. In addition,
$\pi(f,u)=fu$ when $f\in\cal O_M$ and $u\in\cal S'$.

Moreover, for an {\em  arbitrary\/} open set $\Omega\subset\Rn$, a
product $\pi_\Omega(\cdot,\cdot)$ is
defined by {\em lifting\/} to $\Rn$. Boundedness of $\pi$ on $\Rn$ is
shown to carry over to $\pi_\Omega$ in general.
\end{abstract}
\maketitle
\section{Introduction} \label{intr-sect}
For the pointwise multiplication of functions, given as
 \begin{equation}
 \mu(f,g)(x)=f\cdot g(x)= f(x) g(x),
 \label{1.1}
 \end{equation}
the {\em differentiability and integrability\/} properties of $\mu(f,g)$ 
are examined and expressed in terms of such properties of the two 
factors $f$ and $g$.

To exemplify this, note that on one hand  $\mu(f,g)$ can have
integrability properties determined by $f$ and $g$, since for 
$0<p,q\le\infty$ H\"older's inequality shows the implication
 \begin{equation}
 f\in L_p(\Rn),\quad g\in L_q(\Rn),\quad
 \fracp +\fracc 1q =\fracc 1r\quad\Longrightarrow
 \quad\mu(f,g)\in L_r(\Rn).
 \label{1.2}
 \end{equation}
On the other hand $\mu(f,g)$ can have differentiability
properties determined by $f$ and $g$, since
it follows from Leibniz' rule that
 \begin{equation}
 f\in C^s(\Rn),\quad g\in C^t(\Rn),\quad r=\min(s,t)
 \quad\Longrightarrow\quad\mu(f,g)\in C^r(\Rn),
 \label{1.3}
 \end{equation}
where $C^s(\Rn)$ with $s\in\R_+\!\setminus\N$  is the following
H\"older space, with global properties suited for Fourier analysis,
 \begin{equation}
 C^s(\Rn)=\big\{\,u\in C^{\bignt s}(\Rn)\bigm|
 \smash[b]{
            \sum_{|\alpha|=\bignt s} }
 \sup_{x\ne y} \frac{|D^\alpha u(x)-D^\alpha u(y)|}{|x-y|^{s-\bignt s}}
 <\infty\,\big\};
 \label{1.4}
 \end{equation}
see Section \ref{notation-sect} for the notation.

More generally one can describe both properties simultaneously, even
with fractional derivatives in the $L_p$-sense.
To do so we follow \name{M. Yamazaki} and \name{W. Sickel}, cf.~\cite{Y1} and
\cite{S}, in their use of
Fourier analysis and para-multiplication in the framework of 
the scale of {\em anisotropic Besov spaces\/} $B^{\Mi,s}_{p,q}(\Rn)$,
for $s\in\R$ and $ 0<p,q\le\infty$, and the scale of anisotropic 
{\em Triebel--Lizorkin spaces\/} $F^{\Mi,s}_{p,q}(\Rn)$; 
the latter are only considered for $0<p<\infty$. See
Section~\ref{spaces-ssect} for details on these spaces.

\bigskip

However, for technical reasons it is convenient to replace
$\mu(\cdot,\cdot)$ by a product $\pi(u,v)$ defined
 for $u$ and $v\in\cal S'(\Rn)$ as 
 \begin{equation}
 \pi(u,v)=\lim_{k\to\infty}\cal F^{-1}(\psi_k\cal Fu)\cdot
                               \cal F^{-1}(\psi_k\cal Fv),
 \label{1.5}
 \end{equation}
when the limit exists in $\cal D'(\Rn)$ for each $\psi\in
C^\infty_0(\Rn)$ that equals $1$ on a neighbourhood of $0$; the limit
is required to be independent of the functions $\psi$. Hereby $\psi_k(\xi)=
\psi(2^{-k\Mi}\xi)$ denotes a quasi-homogeneous dilation, 
cf.~Section~\ref{notation-sect}.

An advantage of $\pi(\cdot,\cdot)$ is that it allows the use of
para-multiplication, cf.~Section~\ref{paramult-sect}, but it is a drawback
that one has to examine 

 \begin{enumerate}
  \item[(I)]\sl whether $ \pi(f,g)=\mu(f,g)=f\cdot g$,
        whenever $f$ and $g$ are functions, and whether $\pi(\cdot,\cdot)$ in
        general has properties similar to those of $\mu(\cdot,\cdot)$.
 \end{enumerate}
In this paper the analysis of $\pi(\cdot,\cdot)$ is centred around the
following 

 \medskip

{\bf Main questions\/}: For each $j=0$, $1$ and $2$, let $A_j$
denote either a Besov space $B^{\Mi,s_j}_{p_j,q_j}(\Rn)$ or a
Triebel--Lizorkin space $F^{\Mi,s_j}_{p_j,q_j}(\Rn)$.
 \begin{enumerate}
  \item[(II)] {\sl Which conditions on $(s_0,p_0,q_0)$ and
         $(s_1,p_1,q_1)$ are necessary and sufficient for 
         $\pi(\cdot,\cdot)$ to be a bounded bilinear operator 
         \begin{equation}
         \pi(\cdot,\cdot)\colon A_0\oplus A_1\to A
         \label{1.6}
         \end{equation}
        for some Besov or Triebel--Lizorkin space $A$? }
  \item[(III)] {\sl And in the affirmative case,
        \sl which conditions on $(s_2,p_2,q_2)$ are necessary and
        sufficient for obtaining \eqref{1.6} with $A=A_2$?}
 \end{enumerate}

For convenience,
any case where, e.g., $A_0$ and $A_1$ are Besov spaces and $A_2$ 
is a Triebel--Lizorkin space is referred to as a $BBF$ case. 
It is also practical to let ``$\bigdot$''
denote a space which can be either a Besov or a Triebel--Lizorkin space.
In this terminology question (III) above has a version for each of the
$\bigdot\bigdot\bigdot$ cases, whereas (II) has $BB\bigdot$,
$BF\bigdot$ and $FF\bigdot$ versions.

A solution to the problem for $\mu(\cdot,\cdot)$ in \eqref{1.1} ff.
above is gathered by establishing answers to (I), (II) and (III).
In these directions it is obtained in this article that:
 \begin{enumerate}
  \item Para-multiplication allows an {\em almost exhaustive\/} discussion
        of (II) and (III) in the $BBB$ and $FFF$ cases.

   In more details, in Section~\ref{ncss-sect} below 
   the set of necessary conditions is enlarged, and afterwards, in 
   Section~\ref{suff-sect}, para-multiplication is used to show that 
   the new set of conditions is sufficient too, except in some
   borderline cases.

   In fact, for $\max(s_0,s_1)>0$ a complete answer is given to
   question~(II) in the $BB\bigdot$ and $FF\bigdot$ cases. For the
   isotropic $FFF$ cases the receiving $A_2$ spaces in (III) is 
   completely characterised for generic $A_0$ and $A_1$. In the
   general generic $BBB$ and $FFF$ cases 
   a few gaps remain open concerning (III).
 
  \item The identity $\pi(f_0,f_1)=f_0\cdot f_1$ holds when $f_j\in
    L_{p_j,\op{loc}}\cap\cal S'$ for $p_j\in\,]0,\infty]$ such that 
    $\fracc1{p_0}+\fracc1{p_1}\le1$
    (i.e., when \eqref{1.2} gives rise 
    to a product in $L_{1,\op{loc}}\subset\cal D'$).

    Moreover, $\pi(f,u)=fu$ for $f\in\cal O_M$ and $u\in\cal S'$.
 
  \item For an {\em arbitrary\/} open set $\Omega\subset\Rn$, a
    product, $\pi_\Omega(\cdot,\cdot)$, on $\Omega$ can be defined by
    lifting to $\Rn$, that is to say, by letting
     \begin{equation}
      \pi_\Omega(u,v)=\lim_{k\to\infty}r_\Omega( \cal F^{-1}(\psi_k\cal Fu')
             \cdot\cal F^{-1}(\psi_k\cal F v')),
     \label{1.6'}
     \end{equation}
    when the limit exists in $\cal D'(\Omega)$ for $u'$ and $v'\in\cal
    S'(\Rn)$ such that $r_\Omega u'=u$ and $r_\Omega v'=v$
    (cf.~Definition~\ref{subset-defn} below).

    As a consequence boundedness of $\pi(\cdot,\cdot)$ as in \eqref{1.6} 
    generally implies boundedness of $\pi_\Omega(\cdot,\cdot)$
    in the corresponding spaces over $\Omega$.

   \item When $\omega\subset\Omega$ is an open set, then
    $\pi_\Omega(u,v)=0$ in $ \omega$, if either $r_\omega u=0$ or
    $r_\omega v=0$ (which trivially holds for $\mu$).
 \end{enumerate}

Concerning earlier contributions to this subject the paper
\cite{Y1} deals with the situation where $p_0=p_1=p_2$, whereas
\cite{S} treats the cases with $p_0=p_1\ne p_2$. Since then the general
problem with $p_0\ne p_1$ has been addressed by \name{H.~Amann} 
in \cite{Ama} and
by \name{Sickel} in \cite{Sic91}, and in fact it is done in both papers for
$m$ factors, with $m\ge2$. However, the former of these treats
only Besov spaces $B^{s}_{p,q}$ with $p$ and
$q\in[1,\infty]$ and Sobolev spaces $W^s_p$ (of functions with values
in Banach spaces); and in some cases only the closures of $\cal S$ in
these spaces are covered. In the latter paper the full scale of 
$F^{s}_{p,q}$ spaces is considered. However, these references do not
provide any new necessary conditions for the general problem. 

In a recent work of \name{Sickel} and \name{H.~Triebel} 
\cite{ST} the case $p_0\ne p_1$
is also studied and a rather complete set of necessary conditions is 
given. However,
there the scope is restricted to the isotropic situation where $0\le
s_0=s_1=s_2$ and $s_j-\fracc n{p_j}\in\,]-n,0[$ holds for $j=0$ and $1$. 

It should be mentioned, that the questions (II) and (III) have
been considered much earlier even for $p_0\ne p_1$. In fact, for
Sobolev spaces $W^s_p$ there is a treatment (for $m$ factors) by
\name{R.~Palais} \cite{Pal}, and Besov (as well as Sobolev) spaces were
considered by \name{J.~L.~Zolesio} in \cite{Zol}. \name{B.~Hanouzet} 
\cite{Han} treated Besov spaces with $p$ and $q\in[1,\infty]$.

As a general reference the recent monograph by \name{M.~Oberguggenberger}
\cite{Ober} is mentioned. The more classical multiplier subject is
treated by \name{V.~A.~Maz'ya} and \name{T.~O.~Shaposhnikova} \cite{MazShap}
and in \name{R.~S.~Strichartz'} paper \cite{Strch}, e.g. 

\bigskip

The definition of $\pi(\cdot,\cdot)$, cf.~\eqref{1.5}, has been introduced
independently in \cite{Sic91} and \cite{JJ93} but without the
$\psi$-independence. In a related context, this requirement has been
shown to be necessary by  \name{J.~F.~Colombeau} and
\name{Oberguggenberger} \cite{ColOber90}, cf.~also Remark~\ref{indp-rem} below.

The results in (2)--(4)\,---\,that address (I) above\,---\,are 
important for the applications of para-multiplication, in particular for
problems on domains $\Omega\subset\Rn$. In \cite{S} the result in (2)
was observed  in the rather restricted case with {\em
global\/} spaces for which $\fracp+\fracc1q=1$. Seemingly (3) and (4)
are unprecedented (at least when $\max(p,q)=\infty$ or when $\Omega$
is non-smooth). (4) is used to show (2) and (3).

In comparison with \cite{Ama} and \cite{Sic91} the present article
treats anisotropic spaces (though only for $m=2$) and it gives 
sufficient conditions which in 
 \begin{itemize}
  \item the $BBB$ cases generalise those in \cite{Ama}, since we 
allow $p$ and $q$ to be arbitrary in $\,]0,\infty]$ and treat the full
spaces (instead of closures of $\cal S(\Rn)$), and in addition our
statements on the sum-exponents $q$ are sharper, 
  \item the $BBB$ and $FFF$ cases cover various 
{\em borderline cases\/} in (II) above, whereas \cite{Sic91} 
does not deal with these at all. (\name{Sickel}, however, also treats 
intersections like $F^{s}_{p,q}\cap L_\infty$.)
 \end{itemize}
The sufficient conditions here are supplemented by a set of necessary
conditions which in the generic $BBB$ and $FFF$ cases leaves only a
few open questions, cf.~(1).

When restricted to the case $0<s_0=s_1=s_2$, our sufficient conditions
coincide with those contained in \cite{ST}, whereas the
necessary conditions there are slightly sharper, in fact also sufficient.
Moreover, they include a study
of the case $0=s_0=s_1=s_2$, cf.~Remark~\ref{ST-rem} below. 

Altogether new sufficient conditions for 
the $BBB$ and $FFF$ cases are given here. In addition
we include a rather sharp set of necessary conditions, valid for general
anisotropic problems. Moreover, the results (2), (3) and (4) above
are proved.

\bigskip

An overview of the necessary and
sufficient conditions for multiplication, cf.~(II) and (III), is given
in the beginning of Section~\ref{suff-sect} below. Comments on the  
applications can be found in Section~\ref{appl-sect}.

\bigskip

{\em Thanks\/} are due to M.~Yamazaki and to W.~Sickel for conversations that
have led to improvements of the results.

\section{Notation and preliminaries} \label{notation-sect} 
 
For a normed or quasi-normed space $X$
we denote by $\norm xX$ the norm of the vector $x$. (Recall that 
$X$ is quasi-normed when the triangle inequality is weakened to
$\norm{x+y}X\le c(\norm xX+\norm yX)$ for some 
$c\ge1$ independent of $x$ and $y$. The prefix ``quasi-'' is omitted
when confusion is unlikely to occur.) For $X_1\times X_2$
the quasi-norm $\norm{x_1}{X_1}+\norm{x_2}
{X_2}$ is used, and considered in this way we write $X_1\oplus X_2$.

As simple examples there is $L_p(\Rn)$ and $\ell_p:=\ell_p(\N_0)$ for 
$p\in\,]0,\infty]$, where $c=2^{\fracpi-1}$ is possible for $p<1$.
However, it is a stronger fact that 
 \begin{equation}
 \norm{f+g}{L_p}\le(\norm{f}{L_p}^p+\norm {g}{L_p}^p)^{\fracpi},
 \quad\text{ for}\quad0<p\le1,
 \label{1.27}
 \end{equation}
which has an exact analogue for the $\ell_p$ spaces.

For a bilinear operator $B(\cdot,\cdot)\colon X_1\oplus
X_2\to Y$, continuity is equivalent to the
existence of a constant $c$ such that $\norm{B(x_1,x_2)}{Y}\le
c\norm{x_1}{X_1}\norm{x_2}{X_2}$ and to boundedness. 
A map $T\colon X\to\cap Y_j$ 
is continuous, if $T\colon X\to Y_j$ is continuous for each~$j$.

The space of compactly supported smooth functions is denoted 
by $C^\infty_0(\Omega)$ or $\cal D(\Omega)$, 
when $\Omega\subset\Rn$ is open;
then $\cal D'(\Omega)$ is the dual space of distributions on $\Omega$.
$\ang{u,\varphi}$ denotes the duality between $u\in\cal D'(\Omega)$
and $\varphi\in C^\infty_0(\Omega)$.

The Schwartz space is denoted by $\cal S(\Rn)$, and the tempered distributions
by $\cal S'(\Rn)$. The seminorms on $\cal S(\Rn)$ are taken to be
$\norm{\psi}{\cal S,\alpha,\beta}=
\sup\bigl\{\,|x^\alpha D^\beta\psi| \bigm| x\in\Rn\,\bigr\}$ for
$\alpha,\beta\in\N_0^n$, or equivalently $\norm{\psi}{\cal
S,N}=\max\bigl\{\,\norm{\psi}{\cal S,\alpha,\beta}\bigm|
|\alpha|,|\beta|\le N\,\bigr\}$ for $N\in\N_0$.

The Fourier transform is denoted by $\cal Fu(\xi)=\hat u(\xi)=\int_{\Rn}
e^{-ix\cdot\xi}u(x)\,dx$, and  the notation $\cal F^{-1}v(x)=\check
v(x)$ is used for its inverse. As customary in Fourier analysis 
 \begin{equation}
 C(\Rn)=\{\,f\in L_\infty(\Rn)\mid \text{$f$ is uniformly continuous}\,\} 
 \end{equation}
and $\norm{f}{C(\Rn)}=\sup|f|$. Moreover,
$C^k(\Rn)=\{\,f\mid D^\alpha f\in C(\Rn),\ |\alpha|\le k\,\}$ 
with the semi-norms $\norm{f}{C^k}=
\sup\bigl\{\,|D^\alpha f(x)|\bigm| x\in\Rn,\ |\alpha|\le k\,\bigr\}$. 
$C^\infty=\cap_k C^k$. 

When $\Omega\subset\Rn$ is open, the restriction
$r_\Omega\colon \cal D'(\Rn)\to\cal D'(\Omega)$ is the
transpose of the extension by 0 outside of $\Omega$, denoted
$e_\Omega\colon C^\infty_0(\Omega)\to C^\infty_0(\Rn)$.

For $t\in\R$, $t_\pm=\max(0,\pm t)$ and
$\bignt t$  denotes the largest integer $\le t$, whereas  $\smlnt t$
is the smallest integer $\ge t$. Moreover, $\min^+(s,t):=\min(s,t,s+t)$.

For each given assertion  we shall follow \name{D. E. Knuth}'s
suggestion in \cite{K} and let $\kd{assertion}$ denote 1 respectively 0
when the assertion is true respectively false.

\subsection{The spaces} \label{spaces-ssect}
To make the considerations in this paper more applicable the
aniso\-tro\-pic versions of the Besov and Triebel--Lizorkin spaces are treated
(at a marginal extra cost). The reader can easily specialise to the 
isotropic case, if desired, since then $\M=(1,\dots,1)$, $|\M|=n$ 
and $[x]=|x|$ below.

The definitions are recalled from 
\cite{Y1}: First each coordinate $x_j$ in $\Rn$ is given a weight
$m_j\ge1$, such that $\min m_j=1$, and $\M=(m_1,\dots,m_n)$ with
$|\M|=m_1+\dots+m_n$. The action of $t\in\R_+=\,]0,\infty[\,$ on
$x\in\Rn$ is defined by $t^\Mi x=(t^{m_1}x_1,\dots,t^{m_n}x_n)$, and
$t^{s\Mi}x=(t^s)^\Mi x$ for $s\in\R$, so that $t^{-\Mi}x=(t^{-1})^\Mi x$.
The anisotropic numerical value $[x]$ associated with $\M$ is 
introduced for $x=0$ as $[0]=0$ and otherwise  as the unique 
positive $t$ such that $t^{-\Mi}x\in S^{n-1}$, i.e., such that
 \begin{equation}
 \left(\frac {x_1}{t^{m_1}}\right)^2+\dots+
 \left(\frac {x_n}{t^{m_n}}\right)^2=1\,.
 \label{1.13}
 \end{equation}
See  for example \cite{Y1} for properties of and remarks on $[x]$. As
examples one could let $\M=(1,\dots,1,2)$ in a treatment of, say, the
Navier--Stokes equations or of the parabolic operator
$\partial_n-(\partial^2_1+\dots+\partial^2_{n-1})$; then
$[x]=\bigl(\frac12(|x'|^2+(|x'|^4+4x_n^2)^{\frac12})\bigr)^{\frac12}$ for
$|x'|^2=x^2_1+\dots+x^2_{n-1}$. 

Secondly a partition of unity, $1=\sum_{j=0}^\infty\Phi_j$, is 
constructed: From a fixed $\Psi\in C^\infty(\R)$, such
that $\Psi(t)=1$ for $0\le t\le\tfrac{11}{10}$ and $\Psi(t)=0$
for $\tfrac{13}{10}\le t$, the functions
 \begin{equation}
 \Psi_j(\xi)= \kd{j\in\N_0} \Psi(2^{-j}[\xi])
 \label{1.14} 
 \end{equation}
are introduced and used to define
 \begin{equation}
  \Phi_j(\xi)=\Psi_j(\xi)-\Psi_{j-1}(\xi),
  \quad\text{ for}\quad j\in\Bbb Z\,.
  \label{1.15} 
 \end{equation}
Thirdly there is then a decomposition, with (weak) convergence 
in $\cal S'$,
 \begin{equation}
 u =\sum_{j=0}^\infty\,\cal F^{-1}\Phi_j\cal Fu\,,
 \quad\text{ for every}\quad u\in\cal S'\,.
 \label{1.16} 
 \end{equation}
Here it is understood that $\cal F^{-1}\psi\cal F u=\cal
F^{-1}(\psi\hat u)$ for $\psi\in\cal S$ and $u\in\cal S'$. 
Moreover, $u_k:=\cal F^{-1}\Phi_k\cal F u$ and
$u^k:=\cal F^{-1}\Psi_k\cal Fu$, and also for general $\psi\in\cal S$
we write $u^k=\cal F^{-1}\psi_k\cal Fu$ when $\psi_k=\psi(2^{-k\Mi}\cdot)$.

\smallskip

Now the {\em anisotropic Besov space}, $B^{\Mi,s}_{p,q}(\Rn)$, with {\em weight
$\M$,  smoothness
index $s\in\R$, integral-exponent $p\in\left]0,\infty\right]$ 
{\rm and} sum-exponent} $q\in\left]0,\infty\right]$, is defined as
 \begin{equation}
 B^{\Mi,s}_{p,q}(\Rn)=\bigl\{\,u\in\cal S'(\Rn)\bigm|
 \Norm{ \{2^{sj} \norm{\cal F^{-1}\Phi_j\cal
 Fu(\cdot)}{L_p} \}_{j=0}^\infty}{\ell_q} <\infty\,\bigr\},
 \label{1.17} 
 \end{equation}
and the {\em anisotropic Triebel--Lizorkin space}, $F^{\Mi,s}_{p,q}(\Rn)$, 
with {\em weight $\M$, smoothness
index $s\in\R$, integral-exponent $p\in\left]0,\infty\right[\,$ 
{\rm and} sum-exponent} $q\in\left]0,\infty\right]$ as
 \begin{equation}
 F^{\Mi,s}_{p,q}(\Rn)=\bigl\{\,u\in\cal S'(\Rn)\bigm|
 \Norm{ \norm{ \{2^{sj}\cal F^{-1}\Phi_j\cal
 Fu\}_{j=0}^\infty}{\ell_q} (\cdot)}{L_p} <\infty\,\bigr\}\,.
 \label{1.18}
 \end{equation} 
For the history of (the isotropic versions of) the
spaces we refer to \cite{T2,T3}. 

The spaces $B^{\Mi,s}_{p,q}$ and $F^{\Mi,s}_{p,q}$ are quasi-Banach
spaces with the quasi-norms  given by the finite expressions in
\eqref{1.17} and \eqref{1.18}. Concerning an analogue of \eqref{1.27} one has
 \begin{equation}
 \norm{f+g}{B^{\Mi,s}_{p,q}}\le(\norm f{B^{\Mi,s}_{p,q}}^\lambda+
 \norm {g}{B^{\Mi,s}_{p,q}}^\lambda)^{\frac{1}{\lambda}},
 \quad\text{ for $\lambda=\min(1,p,q)$},
 \label{1.27'}
 \end{equation}
with a similar result for the Triebel--Lizorkin spaces.

\subsection{Embeddings} \label{embd-ssect}
In the rest of this subsection the explicit mention of the
restriction $p<\infty$ concerning the $F^{s}_{p,q}$ spaces is omitted. E.g., 
\eqref{1.19} below should be read with $p\in\,]0,\infty]$ in the~Besov part,
and with $p\in\,]0,\infty[\,$ in the Triebel--Lizorkin part. 

The spaces $B^{\Mi,s}_{p,q}(\Rn)$ and $F^{\Mi,s}_{p,q}(\Rn)$ are
complete, for $p$ and $q\ge1$ they are Banach spaces, and in any case
$\cal S(\Rn)\hookrightarrow B^{\Mi,s}_{p,q}(\Rn),\,F^{\Mi,s}_{p,q}(\Rn)
\hookrightarrow\cal S'(\Rn)$ are continuous. 

By a modification of the proof in \cite{T2} 
of the cases with $\M=(1,\dots,1)$ and $p<\infty$, the image of 
$\cal S(\Rn)$ is dense in 
$B^{\Mi,s}_{p,q}(\Rn)$ and in $F^{\Mi,s}_{p,q}(\Rn)$ for 
$p$ and $q<\infty$, and similarly $C^\infty(\Rn)$ is dense 
in $B^{\Mi,s}_{\infty,q}(\Rn)$ for $q<\infty$. 

The definitions imply that $B^{\Mi,s}_{p,p}(\Rn)=F^{\Mi,s}_{p,p}(\Rn)$,
and they imply the existence of {\em simple\/} embeddings for 
$s\in\R,\,\,p\in\left]0,\infty\right]$ 
and $o$ and $q\in\left]0,\infty\right]$: 
  \begin{gather}
  B^{\Mi,s}_{p,q}(\Rn)\hookrightarrow B^{\Mi,s}_{p,o}(\Rn),
          \quad F^{\Mi,s}_{p,q}(\Rn)\hookrightarrow F^{\Mi,s}_{p,o}(\Rn),
                              \quad\text{when $q\le o$}, 
  \label{1.19}\\ 
  B^{\Mi,s}_{p,q}(\Rn)\hookrightarrow 
                  B^{\Mi,s-\varepsilon}_{p,o}(\Rn),
          \quad F^{\Mi,s}_{p,q}(\Rn)\hookrightarrow 
                  F^{\Mi,s-\varepsilon}_{p,o}(\Rn),\quad\text{when
                  $ \varepsilon>0,$}
   \label{1.19'} \\ 
  B^{\Mi,s}_{p,\min(p,q)}(\Rn)\hookrightarrow F^{\Mi,s}_{p,q}(\Rn)
     \hookrightarrow B^{\Mi,s}_{p,\max(p,q)}(\Rn).
 \label{1.19''}
 \end{gather}
There are Sobolev embeddings if $s-\fracc{|\M|}p\ge
t-\fracc{|\M|}r$ and $r>p$, cf.\ \cite{Y1}, 
 \begin{gather}
 B^{\Mi,s}_{p,q}(\Rn)\hookrightarrow B^{\Mi,t}_{r,o}(\Rn),
 \quad\text{ provided $q\le o$ when $ s-\fracc{|\M|}p
 =t-\fracc{|\M|}r$},
 \label{1.20} \\
 F^{\Mi,s}_{p,q}(\Rn)\hookrightarrow F^{\Mi,t}_{r,o}(\Rn),
 \quad\text{ for any $ o$ and $q\in\,]0,\infty].$}
 \label{1.20'}
 \end{gather}
Furthermore, Sobolev embeddings also exist between the two scales,
in fact under the assumptions $\infty\ge p_1>p>p_0>0$ and 
$s_0-\fracc{|\M|}{p_0}=s-\fracc{|\M|}p=s_1-\fracc{|\M|}{p_1}$ 
 \begin{equation}
  \begin{gathered}
  B^{\Mi,s_0}_{p_0,q_0}(\Rn)\hookrightarrow F^{\Mi,s}_{p,q}(\Rn)
  \hookrightarrow B^{\Mi,s_1}_{p_1,q_1}(\Rn),\\
  \text{for $q_0< p< q_1$ and for $q_0=p\le q$ and $q\le
  p=q_1$, if $\M \ne(1,\dots,1)$;}\\
  \text{and for $q_0\le p$ and $p\le q_1$, if $\M=(1,\dots,1)$.}  
  \end{gathered}
  \label{1.21} 
 \end{equation}
This is obtained from (\ref{1.20}), \eqref{1.20'} and (\ref{1.19''}) 
except for the sharpened 
results for $\M=(1,\dots,1)$, which are interpolation results due 
to \name{J.~Franke}, \cite{F3}, and \name{B.~Jawerth}, \cite{J}, respectively.
 
\bigskip

Concerning relations to other spaces, one has that
$B^{\Mi,s}_{\infty,\infty}(\Rn)=C^{\Mi,s}(\Rn)$ when $s>0$ and 
$\frac{s}{m_k}\notin\N$ for $k=1$,\dots,$n$ 
(the anisotropic H\"older spaces),
and that $L_p(\Rn)=F^{\Mi,0}_{p,2}(\Rn)$ for $1<p<\infty$. Moreover, the 
$F^{\Mi,s}_{p,2}$ equal the anisotropic Bessel-potential spaces 
$H^{\Mi,s}_p$, for $s\in\R$ and $1<p<\infty$, hence\,---\,in the
isotropic case, which is indicated by omission of $\M$\,---\,the 
$F^m_{p,2}(\Rn)$ equal the classical
Sobolev spaces $W^m_p(\Rn)$ for $m\in\N$, and $H^s_{2}(\Rn)=
F^s_{2,2}(\Rn)=B^s_{2,2}(\Rn)$ for $s\in\R$. 
See \cite{Y1}, \cite[Rem.\ 4.4]{Y2} and  
\cite{T2,T3} for these and other identifications.

Furthermore, one finds by use of
(\ref{1.16}), (\ref{1.17}) and (\ref{1.20}), when $0<p,q\le\infty$, that
 \begin{equation}
  \begin{gathered}
  B^{\Mi,s}_{p,q}(\Rn)\hookrightarrow B^{\Mi,0}_{\infty, 1}(\Rn)\hookrightarrow
  C(\Rn)\hookrightarrow L_\infty(\Rn)\hookrightarrow B^{\Mi,0}_{\infty,\infty}
  (\Rn),\\
  \text{if $s>\fracc{|\M|}p$, or if $s=\fracc{|\M|}p$ and $q\le1$}. 
  \end{gathered}
  \label{1.22} 
 \end{equation} 
Then \eqref{1.21} gives for the Triebel--Lizorkin spaces that for 
$0<q\le\infty$
 \begin{equation}
  \begin{gathered}
  F^{\Mi,s}_{p,q}(\Rn)\hookrightarrow B^{\Mi,0}_{\infty,1}(\Rn)\hookrightarrow
  C(\Rn)\hookrightarrow L_\infty(\Rn),\\
  \text{if $s>\fracc{|\M|}p$, or $s=\fracc{|\M|}p$ and either $p<1$}
  \text{ or $p=1\ge q$;}\\
  \text{when $\M=(1,\dots,1)$ also for $s=\fracc{|\M|}p$ with
  $p\le1$}.
  \end{gathered}
  \label{1.23}
 \end{equation}
Moreover, when $|\M|(\fracc{1}p-1)_+\le s<\fracc {|\M|}p$
one has, with $\tfrac{|\M|}{t}=\fracc{|\M|}p-s$, that
 \begin{equation}
 \begin{gathered}
   F^{\Mi,s}_{p,q}(\Rn)\hookrightarrow\bigcap\{\, L_r(\Rn)\mid
  p\le r\le t\,\},\\
  \text{ provided $q \le 1+\kd{1<p} $ if $s=0$}.
 \end{gathered}
 \label{1.24}
 \end{equation}
Indeed, when $s>|\M|(\fracp-1)_+$ and $r=t$ one can use \eqref{1.20'} 
and the fact that $F^{\Mi,0}_{t,2}=L_t$. Hence \eqref{1.16} is a series 
of functions in $L_t$, that converges in $L_t$ since $F^{M,0}_{t,1}
\hookrightarrow L_t$. Then, from $F^{\Mi,s}_{p,q}\hookrightarrow 
F^{\Mi,0}_{p,1}$, it follows that there is also convergence 
in $L_p$, the limits being the same a.e. Thus \eqref{1.24} follows 
for $r=p$ and therefore also for the intermediate values.
This extends to $s=|\M|(\fracp-1)$ when $s\ge0$, since 
$F^{\Mi,s}_{p,q}\hookrightarrow B^{\Mi,0}_{1,1}\hookrightarrow L_1$ then.
 
Likewise (\ref{1.21}) implies for $|\M|(\fracc1p-1)_+\le s
<\fracc{|\M|}p$ and $0<p,q\le\infty$ that, with $t$ as above,
 \begin{equation}
  B^{\Mi,s}_{p,q}(\Rn)\hookrightarrow\bigcap\{\, L_r(\Rn)\mid
  p\le r< t\,\}
 \label{1.25}
 \end{equation}
Here $r=t$ can be included in general when $q<t$, and if either $t\le2$ 
or $\M=(1,\dots,1)$ also when $q\le t$. For $s=0$, one has
$B^{\Mi,0}_{p,q}\hookrightarrow L_p$ for $q\le\min(2,p)$. 
(Cf.~\cite[p.~97]{T3} for the pitfalls in the case $p<1$.) 

The `intermediate value property' for the $L_p$ spaces (used above) gives that
 \begin{equation}
 F^{\Mi,s}_{p_0,q}(\Rn)\cap F^{\Mi,s}_{p_1,q}(\Rn) \subset
  \bigcap\{\,F^{\Mi,s}_{r,q}(\Rn)\mid p_0\le r\le p_1\,\}, 
 \label{1.26}
  \end{equation} 
holds for $p_0\le p_1$, and similarly for the Besov spaces.

\subsection{Convergence theorems} \label{Ythm-ssect} 
As a basic tool Yamazaki's theorems are recalled. They will be applied to
(the series defining) 
the para-multiplication operators $\pi_1$, $\pi_2$ and $\pi_3$ in
Theorem~\ref{basic-thm} below.

\begin{thm} \label{Y1-thm} \ Let $s\in\R$, let $p$ and
$q\in\,]0,\infty]$ and suppose $u_j\in\cal S'(\Rn)$ satisfies
 \begin{equation}
  \supp \hat u_j\subset\bigl\{\,\xi\bigm| \kd{j>0}A^{-1}2^j\le [\xi]\le
 A2^j\,\bigr\},\quad\text{ for}\quad j\in\N_0,
 \label{1.30}
 \end{equation}
for some $A>0$. Then the following holds, if $p<\infty$ in \rom{(2)}:
 \begin{itemize}
 \item[{\rm (1)\/}] If $\Norm{\{2^{sj}
                     \norm{u_j}{L_p}\}^\infty_{j=0}}{\ell_q}=B<\infty$, 
 then the series
 $\sum_{j=0}^\infty u_j$ converges in $\cal S'(\Rn)$ to a limit $u\in
 B^{\Mi,s}_{p,q}(\Rn)$ and the estimate $\norm{u}{B^{\Mi,s}_{p,q}}\le CB$
 holds for some constant $C=C(n,\M,A,s,p,q)$. 

 \item[{\rm (2)\/}]  If 
 $\Norm{\norm{\{2^{sj}u_j\}^\infty_{j=0}}{\ell_q}(\cdot)}{L_p}=B<\infty$, 
 then the series
 $\sum_{j=0}^\infty u_j$ converges in $\cal S'(\Rn)$ to a limit $u\in
 F^{\Mi,s}_{p,q}(\Rn)$ and the estimate $\norm{u}{F^{\Mi,s}_{p,q}}\le CB$
 holds for some constant $C=C(n,\M,A,s,p,q)$. 
 \end{itemize}
\end{thm}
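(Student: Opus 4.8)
The plan is to reduce both parts to three ingredients: an almost-orthogonality observation, a uniform Fourier-multiplier estimate on $L_p$, and (in the $F$-case only) a vector-valued maximal inequality. First I would record the \emph{almost-orthogonality}: since each $\Phi_k$ is supported where $c^{-1}2^k\le[\xi]\le c\,2^k$ for $k\ge1$ (and in a ball for $k=0$), the quasi-homogeneity of the dilations $2^{-k\Mi}$ supplies a constant $N_0=N_0(A,\M)$ such that $\Phi_k\cal Fu_j=0$ whenever $|k-j|>N_0$; here the support hypothesis \eqref{1.30} is used. Consequently, once $u:=\sum_j u_j$ is known to exist in $\cal S'$, one has $\cal F^{-1}\Phi_k\cal Fu=\sum_{|k-j|\le N_0}\cal F^{-1}\Phi_k\cal Fu_j$, a sum of at most $2N_0+1$ terms. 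Next I would establish the \emph{multiplier estimate} $\norm{\cal F^{-1}\Phi_k\cal Fu_j}{L_p}\le c\,\norm{u_j}{L_p}$ with $c$ independent of $j$ and $k$: for $p\ge1$ this is Young's inequality, since $\cal F^{-1}\Phi_k\cal Fu_j=(\cal F^{-1}\Phi_k)*u_j$ and $\norm{\cal F^{-1}\Phi_k}{L_1}$ is uniformly bounded because each $\Phi_k$ is a quasi-homogeneous dilate of $\Phi_1$; for $p<1$ it follows from the Nikol'ski\u\i--Plancherel--P\'olya inequality for band-limited functions, the bandwidth $\sim2^k$ being matched to the dilation.

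For the \emph{convergence in $\cal S'$} I would exploit the spectral localisation of $u_j$ against a test function $\varphi\in\cal S$. Fix $\Theta\in C_0^\infty$ equal to $1$ on the relevant annulus and put $\Theta_j=\Theta(2^{-j\Mi}\cdot)$, so that $\Theta_j\hat u_j=\hat u_j$ and hence $\ang{u_j,\varphi}=\ang{u_j,\varphi_j}$ with $\varphi_j:=\cal F^{-1}(\Theta_j\hat\varphi)$. Because $\hat\varphi$ is Schwartz while $\Theta_j$ lives where $[\xi]\sim2^j$, moving derivatives onto $\varphi$ gives, for every $N$, a bound $\norm{\varphi_j}{L_\infty}\le c_N2^{-jN}$; pairing with $u_j$ (which, being band-limited and in $L_p$, lies in $L_1$ with at most polynomial-in-$2^j$ loss by Nikol'ski\u\i) yields $|\ang{u_j,\varphi}|\le c_N'2^{-jN}$ for all $N$. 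In either part the hypothesis forces $\norm{u_j}{L_p}\le B\,2^{-sj}$, so choosing $N>|s|$ makes $\sum_j\ang{u_j,\varphi}$ absolutely convergent; the partial sums therefore converge in $\cal S'$ to a limit $u$, independent of the order of summation.

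For part~(1) I would then simply combine these facts. Using the quasi-triangle inequality on the finite sum together with the multiplier estimate,
\[
\norm{\cal F^{-1}\Phi_k\cal Fu}{L_p}\le c\sum_{|k-j|\le N_0}\norm{u_j}{L_p}.
\]
Multiplying by $2^{sk}$, writing $2^{sk}=2^{s(k-j)}2^{sj}$ and bounding $2^{s(k-j)}\le2^{|s|N_0}$, one sees that the sequence $\{2^{sk}\norm{\cal F^{-1}\Phi_k\cal Fu}{L_p}\}_k$ is a finite sum of boundedly weighted shifts of $\{2^{sj}\norm{u_j}{L_p}\}_j$; since shifts preserve the $\ell_q$-quasi-norm, its $\ell_q$-quasi-norm is $\le CB$, which is the assertion $\norm{u}{B^{\Mi,s}_{p,q}}\le CB$.

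Part~(2) runs along the same lines, but the interchange of the $\ell_q$- and $L_p$-norms forces a genuinely vector-valued estimate, which I expect to be the main obstacle. The replacement for the scalar multiplier bound is
\[
\Norm{\norm{\{\cal F^{-1}\Phi_k\cal Fg_k\}_k}{\ell_q}(\cdot)}{L_p}\le c\,\Norm{\norm{\{g_k\}_k}{\ell_q}(\cdot)}{L_p},
\]
valid for band-limited $g_k$, which I would obtain from the pointwise domination
\[
|\cal F^{-1}\Phi_k\cal Fg_k(x)|\le c\,\bigl(M(|g_k|^{r})\bigr)(x)^{1/r}
\]
by a Peetre maximal function, with $M$ the anisotropic Hardy--Littlewood maximal operator, followed by the Fefferman--Stein vector-valued maximal inequality applied with exponents $p/r$ and $q/r$, choosing $0<r<\min(p,q)$. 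Granting this, the reindexing of part~(1) carries over verbatim and yields $\norm{u}{F^{\Mi,s}_{p,q}}\le CB$. The delicate point is precisely this vector-valued maximal inequality in the quasi-Banach range $p<1$ or $q<1$: it is where the anisotropy and the low integrability interact, and it is the one step that genuinely goes beyond Young's inequality and elementary support considerations.
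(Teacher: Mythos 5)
A preliminary remark on the comparison: the paper contains no proof of Theorem~\ref{Y1-thm} at all\,---\,it is quoted from Yamazaki, and the text explicitly refers the reader to \cite{Y1}. So your proposal can only be measured against the standard argument, and in its architecture it agrees with it: almost orthogonality of the spectra of the $u_j$ against the decomposition $\{\Phi_k\}$, a uniform dilation-invariant multiplier bound on $L_p$, and, in the $F$-case, pointwise domination by a Peetre-type maximal function followed by the Fefferman--Stein vector-valued maximal inequality for the anisotropic maximal operator with exponents $p/r$ and $q/r$, $0<r<\min(p,q)$. Your attribution of the $p<1$ multiplier bound to Nikol'ski\u\i--Plancherel--P\'olya is loose (what is really used is the convolution inequality for band-limited functions in $L_p$, $p\le1$, which belongs to the same circle of ideas), but as a sketch this is harmless.

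There is, however, one step that is wrong as written: your proof of convergence of $\sum_j u_j$ in $\cal S'(\Rn)$. You claim that $u_j$, ``being band-limited and in $L_p$, lies in $L_1$ with at most polynomial-in-$2^j$ loss by Nikol'ski\u\i''. The Nikol'ski\u\i\ inequality \emph{raises} the integrability index of a band-limited function; it never lowers it. The claim is correct for $p\le1$, but false for $1<p\le\infty$\,---\,and $p=\infty$ is admitted in part~(1). Concretely, with $m_{j_0}=1$ the function $u_j(x)=e^{i2^jx_{j_0}}$ has $\cal Fu_j$ equal to a point mass at $2^j\zeta$, where $[2^j\zeta]=2^j$, so \eqref{1.30} holds, and $\norm{u_j}{L_\infty}=1$, yet $u_j\notin L_1(\Rn)$; the same failure occurs for $e^{i2^jx_{j_0}}f$ with any $f\in L_p\setminus L_1$ having small spectrum. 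The conclusion you want does survive, but the estimate must be placed on the test function rather than on $u_j$: keeping your reduction $\ang{u_j,\varphi}=\ang{u_j,\varphi_j}$ with $\varphi_j=\cal F^{-1}(\Theta_j\hat\varphi)$, apply H\"older with the conjugate exponent, $|\ang{u_j,\varphi}|\le\norm{u_j}{L_p}\norm{\varphi_j}{L_{p'}}$, and note that \emph{every} Schwartz seminorm of $\Theta_j\hat\varphi$ is $O(2^{-jN})$ for every $N$: the derivatives $D^\alpha\Theta_j=2^{-j\Mi\cdot\alpha}(D^\alpha\Theta)(2^{-j\Mi}\cdot)$ are bounded uniformly in $j$ because $\M\cdot\alpha\ge0$, while $\hat\varphi$ and its derivatives decay rapidly on $\supp\Theta_j\subset\{\,\xi\mid[\xi]\ge c2^j\,\}$, where also $|\xi|\ge c2^j$. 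Since $\cal F^{-1}$ is continuous on $\cal S(\Rn)$ and $\cal S(\Rn)\hookrightarrow L_{p'}(\Rn)$, this gives $\norm{\varphi_j}{L_{p'}}\le c_N2^{-jN}$ for all $N$, and summability follows as before on choosing $N>|s|$. With this repair (your Nikol'ski\u\i\ argument retained only for $p\le1$, where it is valid), the remainder of your proof goes through.
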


The second of these theorems states that the spectral conditions on
the series $\sum_{j=0}^\infty u_j$ can be relaxed if the smoothness
index $s$ is sufficiently large.

\begin{thm} \label{Y2-thm} \ Let $s\in\R$, let $p$ and 
$q\in\,]0,\infty]$ and suppose $u_j\in\cal S'(\Rn)$ satisfies
 \begin{equation}
 \supp \hat u_j\subset\bigl\{\,\xi\bigm| [\xi]\le A2^j\,\bigr\},\quad
 \text{ for}\quad j\in\N_0,
 \label{1.31}
 \end{equation}
for some $A>0$. Then the following holds, if $p<\infty$ in \rom {(2)}:
 
\begin{itemize}
\item[{\rm (1)\/}] If $s>|\M|(\fracp-1)_+$ and
if $\Norm{\{2^{sj}\norm{u_j}{L_p}\}^\infty_{j=0}}{\ell_q}=B<\infty$, 
then the series
$\sum_{j=0}^\infty u_j$ converges in $\cal S'(\Rn)$ to a limit $u\in
B^{\Mi,s}_{p,q}(\Rn)$ and the estimate $\norm{u}{B^{\Mi,s}_{p,q}}\le CB$
holds for some constant $C=C(n,\M,A,s,p,q)$. 

\item[{\rm (2)\/}] If $s>|\M|(\tfrac1{\min(p,q)}-1)_+$, and
if $\Norm{\norm{\{2^{sj}u_j\}^\infty_{j=0}}{\ell_q}(\cdot)}{L_p}=B<\infty$, 
then the series
$\sum_{j=0}^\infty u_j$ converges in $\cal S'(\Rn)$ to a limit $u\in
F^{\Mi,s}_{p,q}(\Rn)$ and the estimate $\norm{u}{F^{\Mi,s}_{p,q}}\le CB$
holds for some constant $C=C(n,\M,A,s,p,q)$. 
\end{itemize}
\end{thm}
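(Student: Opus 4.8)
The plan is to re-expand the series with the Littlewood--Paley operators $\cal F^{-1}\Phi_k\cal F$, for which the ball-type condition \eqref{1.31} forces an almost-diagonal behaviour, and to let the gain in $s$ absorb the resulting one-sided overlap. Write $\sigma=|\M|(\fracp-1)_+$ and $\tau=|\M|(\fracc1{\min(p,q)}-1)_+$, so the two hypotheses read $s>\sigma$ and $s>\tau$ (and $\tau\ge\sigma$). \emph{First} I would settle convergence of $\sum_{j}u_j$ in $\cal S'(\Rn)$: pairing with $\varphi\in\cal S(\Rn)$ and using that $\hat u_j$ lives in $\{[\xi]\le A2^j\}$, the anisotropic Nikolskii inequality for band-limited $L_p$-functions gives $|\ang{u_j,\varphi}|\le C2^{j\sigma}\norm{u_j}{L_p}\,\norm{\varphi}{\cal S,N}$ for a suitable $N$. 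Since $\norm{u_j}{L_p}\le 2^{-sj}a_j$ with $a_j:=2^{sj}\norm{u_j}{L_p}$ bounded, the gap $s>\sigma$ produces geometric decay, so the partial sums are Cauchy in $\cal S'$ and $u:=\sum_j u_j\in\cal S'(\Rn)$ is well defined.

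\emph{Next}, since $\cal F^{-1}\Phi_k\cal F$ is continuous on $\cal S'(\Rn)$, the $k$-th Littlewood--Paley piece of $u$ is $v_k:=\cal F^{-1}\Phi_k\cal Fu=\sum_{j}\cal F^{-1}\Phi_k\cal Fu_j$, and because $\Phi_k$ is carried by the annulus $\{[\xi]\sim2^k\}$ while $\supp\hat u_j\subset\{[\xi]\le A2^j\}$, only the indices $j\ge k-N'$ survive, where $N'=N'(A)$ is fixed. The technical heart is the uniform multiplier estimate
\[
  \norm{\cal F^{-1}\Phi_k\cal Fu_j}{L_p}\le C\,2^{(j-k)\sigma}\norm{u_j}{L_p},\qquad k\le j+N',
\]
which for $p\ge1$ is Young's inequality with the dilation-invariant bound $\norm{\cal F^{-1}\Phi_k}{L_1}=\norm{\cal F^{-1}\Phi_1}{L_1}$, and for $p<1$ is the Nikolskii-type convolution inequality for functions band-limited to $\{[\xi]\le A2^j\}$; the displayed power $2^{(j-k)\sigma}$ is sharp, as the example $\hat u_j\equiv1$ on $\{[\xi]\le A2^j\}$ shows.

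\emph{For case (1)} I would insert this into the $\lambda$-triangle inequality of \eqref{1.27'} with $\lambda=\min(1,p,q)$, obtaining
\[
  2^{sk}\norm{v_k}{L_p}\le C\Big(\sum_{j\ge k-N'}2^{(j-k)(\sigma-s)\lambda}a_j^{\lambda}\Big)^{1/\lambda}.
\]
As $s>\sigma$, the kernel $\{2^{i(\sigma-s)\lambda}\}_{i\ge-N'}$ lies in $\ell_1$, so Young's inequality for sequences yields $\Norm{\{2^{sk}\norm{v_k}{L_p}\}^{}_k}{\ell_q}\le C\Norm{(a_j)_j}{\ell_q}=CB$. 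By definition \eqref{1.17} this last quantity is $\norm{u}{B^{\Mi,s}_{p,q}}$, so $u\in B^{\Mi,s}_{p,q}(\Rn)$ with $\norm{u}{B^{\Mi,s}_{p,q}}\le CB$; its finiteness also upgrades the $\cal S'$-convergence of $\sum_j\cal F^{-1}\Phi_k\cal Fu_j$ to convergence in $L_p$, confirming that $v_k$ is the Littlewood--Paley piece of $u$. One may equally feed the annularly supported family $\{v_k\}$ into Theorem~\ref{Y1-thm}.

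\emph{Case (2)} runs along the same lines but inside the mixed norm $\Norm{\norm{\{2^{sk}v_k\}^{}_k}{\ell_q}(\cdot)}{L_p}$ of \eqref{1.18}, where the scalar triangle inequality is unavailable. Here I would replace $|\cal F^{-1}\Phi_k\cal Fu_j|$ by a constant times $2^{(j-k)\tau}$ times the Peetre maximal function $u_j^{*}(x)=\sup_y|u_j(x-y)|/(1+[2^{j\Mi}y])^{a}$ with $a>|\M|/\min(p,q)$, and then control the $\ell_q$-valued convolution by the Fefferman--Stein vector-valued maximal inequality together with $\Norm{\norm{\{u_j^{*}\}^{}_j}{\ell_q}}{L_p}\le C\Norm{\norm{\{u_j\}^{}_j}{\ell_q}}{L_p}$. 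The exponent $\min(p,q)$ supersedes $p$ precisely because both the spatial integrability and the inner $\ell_q$-summation enter these maximal estimates, and $s>\tau$ is exactly what keeps the sequence-convolution kernel $\{2^{i(\tau-s)\lambda}\}_{i\ge-N'}$ in $\ell_1$. I expect this step to be the \textbf{main obstacle}: it is the sole point requiring the vector-valued maximal inequality and the sharp maximal-function bound with the correct power, and one must verify that the chosen $a$ simultaneously validates the Fefferman--Stein inequality and leaves the kernel summable; the remaining interchange-of-limit and convergence issues are routine once the uniform bounds are in hand.
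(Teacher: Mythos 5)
Your part (1) is correct, and it is essentially the argument behind the result: note that the paper itself does not prove Theorem~\ref{Y2-thm} but refers to Yamazaki \cite{Y1}, and your combination of the Nikolski\u{\i}-type bound $\norm{\cal F^{-1}\Phi_k\cal Fu_j}{L_p}\le c\,2^{(j-k)\sigma}\norm{u_j}{L_p}$ (for $k\le j+N'$) with the $\lambda$-triangle inequality and a one-sided discrete convolution is exactly that route; the discrete step is what the paper records as Lemma~\ref{Y-lem}. The preliminary $\cal S'$-convergence argument is also fine.

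Part (2), however, rests on a false estimate. You claim
\begin{equation*}
 \bigl|\cal F^{-1}\Phi_k\cal Fu_j(x)\bigr|\le c\,2^{(j-k)\tau}\,u_j^{*}(x),
 \qquad u_j^{*}(x)=\sup_y\frac{|u_j(x-y)|}{(1+[2^{j\Mi}y])^{a}},
 \qquad a>\tfrac{|\M|}{\min(p,q)},
\end{equation*}
but no such bound holds with any exponent smaller than $a$; the trivial loss $2^{(j-k)a}$ coming from $\int|\cal F^{-1}\Phi_k(y)|(1+[2^{j\Mi}y])^{a}dy\sim 2^{(j-k)a}$ is sharp. Already in the isotropic case ($[y]=|y|$, $|\M|=n$): write $\cal F^{-1}\Phi_k(y)=2^{(k-1)n}G(2^{k-1}y)$ with $G=\cal F^{-1}\Phi_1$, pick $\chi\in C^\infty_0(\Rn)$, $\chi\ge0$, supported in $\{1\le|z|\le2\}$, and let $u_j=\cal F^{-1}(\Theta_j\cal Fg)$, where $g(y)=\chi(2^{k-1}y)\,\overline{G(-2^{k-1}y)}$ and $\Theta_j$ is a bump equal to $1$ on $\{|\xi|\le2^{j-1}\}$ and supported in $\{|\xi|\le2^{j}\}$. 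Then $\supp\hat u_j$ satisfies \eqref{1.31}, and since $g\in C^\infty_0$ one has $\norm{u_j-g}{L_\infty}=O(2^{-(j-k)N})$ for every $N$. Consequently $\cal F^{-1}\Phi_k\cal Fu_j(0)=\int\chi(z)|G(-z)|^{2}dz+O(2^{-(j-k)N})\ge c>0$ for $j-k$ large (the integral is positive because $G$ is analytic and not identically zero), whereas $g$ vanishes on $\{|y|\le2^{1-k}\}$, so that $u_j^{*}(0)\le C\bigl(2^{-(j-k)N}+2^{-(j-k)a}\bigr)\le C'2^{-(j-k)a}$. Since $\tau<a$ always (indeed $\tau=|\M|(\tfrac1{\min(p,q)}-1)_+<\tfrac{|\M|}{\min(p,q)}<a$), your inequality would force $c\le C'2^{(j-k)(\tau-a)}\to0$, a contradiction. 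The damage is real: with the Peetre maximal function the summability of the sequence kernel requires $s>a>\tfrac{|\M|}{\min(p,q)}$, a strictly stronger hypothesis than $s>\tau$ (for $p,q\ge1$ it excludes every $0<s\le|\M|$, where the theorem only asks $s>0$).

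The repair is to use the Hardy--Littlewood maximal operator $\cal M$ instead: for $0<t\le1$ and $k\le j+N'$,
\begin{equation*}
 \bigl|\cal F^{-1}\Phi_k\cal Fu_j(x)\bigr|
 \le c\,2^{(j-k)|\M|(\frac1t-1)}\bigl(\cal M(|u_j|^{t})(x)\bigr)^{\frac1t}.
\end{equation*}
This is proved by applying the Plancherel--Polya--Nikolski\u{\i} inequality $\norm{h}{L_1}\le cR^{|\M|(\frac1t-1)}\norm{h}{L_t}$ to the \emph{product} $h(y)=\cal F^{-1}\Phi_k(y)\,u_j(x-y)$, whose spectrum lies in a ball of radius $\sim2^{j}$, and then estimating $\norm{h}{L_t}^{t}\le c\,2^{k|\M|(t-1)}\cal M(|u_j|^{t})(x)$ using only the size bound $|\cal F^{-1}\Phi_k(y)|\le c_m2^{k|\M|}(1+[2^{k\Mi}y])^{-m}$; because the kernel's factor $2^{k|\M|}$ is itself raised to the power $t$, the exponent $\tfrac1t-1$ (rather than $\tfrac1t$ or $a$) emerges. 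Choosing $t<\min(1,p,q)$ with $|\M|(\tfrac1t-1)<s$, which is possible precisely because $s>|\M|(\tfrac1{\min(1,p,q)}-1)=\tau$, the rest of your plan goes through verbatim: the geometric $\ell_q$-convolution, then the anisotropic Fefferman--Stein inequality in $L_{p/t}(\ell_{q/t})$ (the vector-valued Peetre maximal inequality you quoted is then not needed at all).
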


For the proofs of Theorems \ref{Y1-thm} and \ref{Y2-thm} the reader 
is referred to \cite{Y1}. In part Theorem \ref{Y2-thm} is based on 
\cite[Lemma 3.8]{Y1}, which we for later reference shall state for
$s<0$ in a slightly generalised version (that is proved similarly):

\begin{lem} \label{Y-lem} 
For each $s<0$ and $q$ and $r\in\,]0,\infty]$ there exists $c<\infty$ such that
 \begin{equation}
 \Norm{ \bigl\{ 2^{sj}(\textstyle{\sum_{k=0}^j} |a_k|^r)^{\fracci1r} \bigr\}
 ^\infty_{j=0} }{\ell_q}
 \le c\norm{ \{2^{sj}a_j\}^\infty_{j=0} }{\ell_q}
 \label{3.7'} 
 \end{equation} 
holds for any sequence $\{a_j\}_{j=0}^\infty$ of complex numbers (with
modification for $r=\infty$).
\end{lem}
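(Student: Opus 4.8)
The plan is to reduce \eqref{3.7'} to a one-sided discrete convolution estimate with a \emph{geometric} kernel, the geometry being supplied precisely by the hypothesis $s<0$. First I would normalise by setting $c_k:=2^{sk}|a_k|$, so that the right-hand side of \eqref{3.7'} is exactly $\norm{\{c_j\}}{\ell_q}$. Rewriting the inner term with this substitution gives, for each $j$,
\[
2^{sj}\Bigl(\sum_{k=0}^j|a_k|^r\Bigr)^{\fracci1r}
 =\Bigl(\sum_{k=0}^j 2^{sr(j-k)}c_k^r\Bigr)^{\fracci1r},
\]
since $2^{srj}|a_k|^r=2^{sr(j-k)}\,(2^{sk}|a_k|)^r$. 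Because $s<0$ and $r>0$ we have $\theta:=2^{sr}\in\,]0,1[\,$, so the factor $2^{sr(j-k)}=\theta^{j-k}$ (for $j\ge k$) is the tail of a summable geometric sequence. This is the only place the sign of $s$ enters, and it is what makes the constant $c$ finite.

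With $f_k:=c_k^r\ge0$ and the kernel $g_m:=\theta^m\kd{m\ge0}$, the inner quantity above is the convolution $(g*f)_j=\sum_{k=0}^j\theta^{j-k}f_k$. Raising both sides of \eqref{3.7'} to the power $q$ then turns the claimed inequality into the single assertion that convolution by $g$ is bounded on $\ell_Q$, where $Q:=q/r$; explicitly, it becomes $\norm{g*f}{\ell_Q}\le c^r\norm{f}{\ell_Q}$. Thus everything reduces to one Young-type inequality for the fixed geometric kernel~$g$.

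The one genuine subtlety is that $Q=q/r$ need not be $\ge1$, so I cannot simply invoke Young's inequality. For $Q\ge1$ (which includes $q=\infty$) Young gives $\norm{g*f}{\ell_Q}\le\norm{g}{\ell_1}\norm{f}{\ell_Q}$ with $\norm{g}{\ell_1}=(1-\theta)^{-1}$. For $0<Q<1$ I would instead use the elementary $Q$-subadditivity $(x+y)^Q\le x^Q+y^Q$ valid for $x,y\ge0$: applied termwise it yields $(g*f)_j^Q\le\sum_m g_m^Q f_{j-m}^Q$, and summing in $j$ gives $\norm{g*f}{\ell_Q}\le\norm{g}{\ell_Q}\norm{f}{\ell_Q}$ with $\norm{g}{\ell_Q}=(1-\theta^Q)^{-1/Q}<\infty$. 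In either regime the constant depends only on $s$, $q$ and $r$. The main obstacle is therefore purely the sub-unit exponent range $0<Q<1$, which is circumvented by this $Q$-power trick rather than by Young's inequality.

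Finally, for $r=\infty$ the inner $\ell_r$-sum is replaced by $\max_{k\le j}|a_k|$. Here $2^{sj}\max_{k\le j}|a_k|=\max_{k\le j}2^{s(j-k)}c_k$, and since $2^{s(j-k)}\le1$ the case $q=\infty$ follows at once from $\sup_j\max_{k\le j}c_k=\norm{\{c_j\}}{\ell_\infty}$; for $q<\infty$ one bounds the maximum by the corresponding $\ell_q$-sum, $(\max_{k\le j}2^{s(j-k)}c_k)^q\le\sum_{k\le j}2^{sq(j-k)}c_k^q$, which is again a convolution by a geometric kernel and so falls under the $Q=1$ estimate already obtained. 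This handles the modification flagged in the statement.
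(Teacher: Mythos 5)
Your proof is correct, and all the exponent ranges (including $q=\infty$ and the $r=\infty$ modification) are genuinely covered. Note that the paper supplies no argument of its own for this lemma: it is stated as a slight generalisation of Lemma 3.8 in \cite{Y1} ``that is proved similarly'', so there is no internal proof to compare against. Your route\,---\,normalising $c_k=2^{sk}|a_k|$, recognising the left-hand side of \eqref{3.7'} as an $\ell_{q/r}$-norm (to the power $\fracci1r$) of a one-sided convolution with the geometric kernel $\theta^m$, $\theta=2^{sr}\in\,]0,1[\,$, and then invoking Young's inequality when $Q=q/r\ge1$ and termwise $Q$-subadditivity when $0<Q<1$\,---\,is the standard proof of such discrete Hardy-type inequalities, and is in essence the argument the paper implicitly defers to; it is the hypothesis $s<0$ that makes the kernel summable, exactly as you point out.
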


We shall also pay attention to the cases with $s=|\M|(\fracp-1)_+$.

\begin{exmp} \label{Y2-exmp}
In case (1) of Theorem \ref{Y2-thm} above it is not possible to relax 
the~condition
$s>|\M|(\fracp-1)_+$ much: 

On one hand, when $s=|\M|(\fracp-1)$ and $q>1$ the sequence 
$\{ k^{-1}\check\Psi_k \}_{k\in\N}$ has finite norm as required there,
but the series $\sum_{k=1}^\infty k^{-1}\check\Psi_k$ is not convergent
in $\cal S'(\Rn)$ since
$\ang{\check\Psi_1+\dots+k^{-1}\check\Psi_{k},\varphi}$ equals
$\varphi(0)(1+\dots+k^{-1})$ when 
$\supp \hat\varphi\subset\{\,\xi\mid\Psi_0=1\,\}$.

On the other hand, for $s=0$ and $q>1$ one may consider $\{ k^{-1}\check
\Psi_0\}_{k\in\N}$. Again $\{k^{-1}\norm{\check\Psi_0}{L_p} \}_{k\in\N}\in
\ell_q$, and with $\varphi$ as above
$\dual{(1+\dots+k^{-1})\check\Psi_0}{\varphi}$ diverges when
$\varphi(0)\ne0$.
Obviously the second example applies also to case (2) of
Theorem~\ref{Y2-thm}. 
\end{exmp}

The cases with $s=0$ may be partly covered, even without
spectral conditions:

\begin{prop} \label{zero-prop}
Let $q\le1\le p\le\infty$ and let $u_j\in\cal S'(\Rn)$ for $j\in\N_0$.
 \begin{itemize}
  \item[(1)] If $\bigl(\sum_{j=0}^\infty\norm{u_j}{L_p}^q\bigr)^{\fracci1q}
  =:B<\infty$,
  then $\sum_{j=0}^\infty u_j$ converges in $L_p(\Rn)$ to a limit
  $u$ with $\norm{u}{L_p}\le B$.
   \item[(2)]  If, for $p<\infty$,
  $\Norm{(\sum_{j=0}^\infty |u_j(\cdot)|^q)^{\fracci1q}}{L_p}=:B<\infty$,
  then $\sum_{j=0}^\infty u_j$ converges in $L_p(\Rn)$ to a limit
  $u$ with $\norm{u}{L_p}\le B$.
 \end{itemize}
\end{prop}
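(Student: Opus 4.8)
The plan is to exploit the hypothesis $q\le1$ through the embedding $\ell_q\hookrightarrow\ell_1$, which reduces both parts to absolute convergence combined with completeness of $L_p$ (for part (1)) or dominated convergence (for part (2)). The single inequality doing all the work is the subadditivity of $t\mapsto t^q$ for $0<q\le1$, i.e.\ $(\sum_j a_j)^q\le\sum_j a_j^q$ for nonnegative $a_j$, or equivalently
\[
\sum_{j=0}^\infty a_j\le\Bigl(\sum_{j=0}^\infty a_j^q\Bigr)^{\fracci1q}
\qquad(a_j\ge0).
\]

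For part (1) I would put $a_j=\norm{u_j}{L_p}$, noting first that finiteness of $B$ forces each $u_j$ to be represented by a genuine $L_p$ function, so that the quasi-norms are the usual norms. The displayed inequality then gives $\sum_{j=0}^\infty\norm{u_j}{L_p}\le B$, so $\sum_j u_j$ is absolutely convergent in the Banach space $L_p(\Rn)$ (here $p\ge1$ enters). By completeness it converges in $L_p$ to a limit $u$, and the triangle inequality yields $\norm{u}{L_p}\le\sum_{j=0}^\infty\norm{u_j}{L_p}\le B$. Convergence in $L_p$ also gives convergence in $\cal S'(\Rn)$ via the continuous embedding $L_p\hookrightarrow\cal S'$, so nothing more is needed.

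For part (2) I would apply the same inequality \emph{pointwise}: for a.e.\ $x$ one has $\sum_{j=0}^\infty|u_j(x)|\le\bigl(\sum_{j=0}^\infty|u_j(x)|^q\bigr)^{\fracci1q}$. Taking $L_p$-norms and using monotonicity of the norm under $0\le f\le g$ gives $\Norm{\sum_{j=0}^\infty|u_j(\cdot)|}{L_p}\le B$. Hence the function $g:=\sum_{j=0}^\infty|u_j|$ lies in $L_p(\Rn)$, so $g(x)<\infty$ for a.e.\ $x$ (as $p<\infty$); this makes $\sum_j u_j(x)$ absolutely convergent a.e.\ to a limit $u(x)$ with $|u|\le g$, whence $u\in L_p$ and $\norm{u}{L_p}\le\norm{g}{L_p}\le B$.

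It then remains to upgrade this a.e.\ convergence to $L_p$-convergence of the partial sums $S_N=\sum_{j=0}^N u_j$. The remainders satisfy $|u-S_N|\le\sum_{j>N}|u_j|=:r_N\le g$ a.e., with $r_N\to0$ a.e.\ as $N\to\infty$ (tails of a convergent series). Since $r_N^p\le g^p\in L_1(\Rn)$ and $r_N^p\to0$ a.e., dominated convergence gives $\norm{u-S_N}{L_p}\le\norm{r_N}{L_p}\to0$, so $S_N\to u$ in $L_p(\Rn)$. I expect no genuine obstacle: the only step needing care is precisely this last one, i.e.\ manufacturing the single $L_p$-majorant $g$ and invoking dominated convergence to convert a.e.\ absolute convergence into norm convergence, everything else being the $\ell_q\hookrightarrow\ell_1$ estimate and completeness of $L_p$.
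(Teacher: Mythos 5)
Your proof is correct and follows essentially the same route as the paper: part (1) is exactly the $\ell_q\hookrightarrow\ell_1$ embedding plus completeness of $L_p$, and part (2) rests on the same pointwise inequality and the same integrable majorant $\bigl(\sum_{j}|u_j|^q\bigr)^{p/q}$. The only difference is organizational: the paper phrases part (2) as a Cauchy estimate on tail blocks $J=\{m,\dots,m+k\}$ followed by majorisation, whereas you pass through a.e.\ absolute convergence and then dominated convergence --- the same argument in different packaging.
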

\begin{proof} In case (1) the embedding $\ell_q\hookrightarrow \ell_1$
implies that $\sum_{j=0}^\infty \norm{u_j}{L_p}\le B$. Then the
completeness of $L_p$ gives the convergence of $\sum u_j$. Concerning
(2) one has that 
$\norm{\sum_{j\in J}u_j}{L_p}\le 
\Norm{(\sum_{j\in J}|u_j|^q)^{\fracci1q}}{L_p}$ when $J\subset\N_0$. For 
$J$ of the form $\{\,m,\dots,m+k\}$ this gives the  convergence by
majorisation with $(\sum_{j=0}^\infty |u_j|^q)^{\fracci pq}$. Then
$J=\{\,0,\dots,m\,\}$ gives $\norm{u}{L_p}\le B$.
\end{proof}

It is seen from Example~\ref{Y2-exmp} above that this result can not
be extended to higher values of $q$. 
For the borderline cases with $s=\fracc{|\M|}p -|\M|$ in
Theorem~\ref{Y2-thm} we have

\begin{prop} \label{border-prop}
Let $p$ and $q\in\,]0,1]$ and let $s=\fracc{|\M|}p-|\M|$. Let moreover
$u_j\in\cal S'(\Rn)$ satisfy the spectral condition in \eqref{1.31}.
 \begin{itemize}
  \item[(1)] If $\bigl(\sum_{j=0}^\infty 2^{sjq}\norm{u_j}{L_p}^q
  \bigr)^{\fracci1q}=:B<\infty$,
  then $\sum_{j=0}^\infty u_j$ converges in $L_1(\Rn)$ to a limit
  $u$ with $\norm{u}{L_1}\le cB$.
   \item[(2)]  If
  $\Norm{(\sum_{j=0}^\infty |2^{sj}u_j(\cdot)|^q)^{\fracci1q}}{L_p}
  =:B<\infty$,
  then $\sum_{j=0}^\infty u_j$ converges in $L_1(\Rn)$ to a limit
  $u$ with $\norm{u}{L_1}\le c B$.
 \end{itemize}
\end{prop}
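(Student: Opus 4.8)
The plan is to reduce both parts to \emph{absolute} convergence of the series $\sum_{j=0}^\infty u_j$ in $L_1(\Rn)$, in the spirit of the proof of Proposition~\ref{zero-prop}. The single new ingredient is a Nikolskii-type inequality for band-limited functions, whose gain in integrability matches the smoothness index exactly because $s=\fracc{|\M|}p-|\M|=|\M|(\fracci1p-1)$ is precisely the borderline value. First I would record that, for $0<p\le1$ and any $v\in\cal S'(\Rn)$ with $\supp\hat v\subset\{\,\xi\mid[\xi]\le R\,\}$,
\begin{equation}
 \norm v{L_1}\le c\,R^{|\M|(\fracci1p-1)}\norm v{L_p},
\label{nik-plan}
\end{equation}
with $c$ independent of $v$ and $R$. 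Such $v$ is a smooth function by the Paley--Wiener--Schwartz theorem, and \eqref{nik-plan} follows from the reproducing identity $v=\cal F^{-1}\bigl(\chi(R^{-\Mi}\cdot)\,\hat v\bigr)$ for a fixed $\chi\in C^\infty_0$ equal to $1$ on $\{[\xi]\le1\}$ (so that $\chi(R^{-\Mi}\xi)=1$ on $\supp\hat v$, using $[R^{-\Mi}\xi]=R^{-1}[\xi]$): splitting $\int|v|=\int|v|^{1-p}|v|^p\le\norm v{L_\infty}^{1-p}\norm v{L_p}^p$ reduces matters to the elementary band-limited estimate $\norm v{L_\infty}\le c\,R^{\fracci{|\M|}p}\norm v{L_p}$, after which the powers of $R$ combine to $R^{|\M|(\fracci1p-1)}$. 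Applying \eqref{nik-plan} with $R=A2^j$ (and absorbing $A^{|\M|(\fracci1p-1)}$ into $c$) yields the crucial bound $\norm{u_j}{L_1}\le c\,2^{sj}\norm{u_j}{L_p}$ for every $j$.

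For part~(1) this is immediate: since $q\le1$ the embedding $\ell_q\hookrightarrow\ell_1$ gives
\begin{equation*}
 \sum_{j=0}^\infty\norm{u_j}{L_1}\le c\sum_{j=0}^\infty 2^{sj}\norm{u_j}{L_p}
 \le c\Bigl(\sum_{j=0}^\infty 2^{sjq}\norm{u_j}{L_p}^q\Bigr)^{\fracci1q}=cB,
\end{equation*}
so the partial sums are Cauchy in the complete space $L_1(\Rn)$, converging to a limit $u$ with $\norm u{L_1}\le cB$.

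For part~(2) I would set $a_j=2^{sj}u_j$ and, arguing as above, reduce everything to the mixed-norm estimate $\sum_j\norm{a_j}{L_p}\le c\,\Norm{(\sum_j|a_j|^q)^{\fracci1q}}{L_p}=cB$, which I would prove in two cases. When $q\le p$, the inclusion $\ell_p\hookrightarrow\ell_1$, Tonelli, and the pointwise inclusion $\ell_q\hookrightarrow\ell_p$ give
\begin{equation*}
 \sum_j\norm{a_j}{L_p}\le\Bigl(\sum_j\norm{a_j}{L_p}^p\Bigr)^{\fracci1p}
 =\Norm{(\textstyle\sum_j|a_j|^p)^{\fracci1p}}{L_p}
 \le\Norm{(\textstyle\sum_j|a_j|^q)^{\fracci1q}}{L_p}.
\end{equation*}
When $q\ge p$, the embedding $\ell_q\hookrightarrow\ell_1$ gives $\sum_j\norm{a_j}{L_p}\le(\sum_j\norm{a_j}{L_p}^q)^{\fracci1q}$, and Minkowski's integral inequality applied with exponent $q/p\ge1$ to the functions $|a_j|^p$ bounds the right-hand side by $\Norm{(\sum_j|a_j|^q)^{\fracci1q}}{L_p}$. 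In either case $\sum_j\norm{u_j}{L_1}\le cB<\infty$, and completeness of $L_1$ finishes the proof exactly as in part~(1).

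The only genuinely nontrivial point is the Nikolskii inequality \eqref{nik-plan} in the quasi-normed range $p<1$; everything else is bookkeeping with the $\ell_q$- and $L_p$-(quasi)norms. I would also expect the case distinction in part~(2) to be essential: for $q>p$ one cannot estimate pointwise and must commute the $\ell_q$-sum past the $L_p$-integral via Minkowski's integral inequality, while it is precisely the ball-type spectral assumption \eqref{1.31} (rather than the annular one, as in Theorem~\ref{Y2-thm}) that makes \eqref{nik-plan} available and saves the borderline $s=\fracc{|\M|}p-|\M|$.
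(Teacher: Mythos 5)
Your proof is correct and follows essentially the same route as the paper's: the quasi-homogeneous Nikolski\u\i --Plancherel--Polya inequality $\norm{u_j}{L_1}\le c\,2^{sj}\norm{u_j}{L_p}$ obtained from the ball-type spectral condition, then $\ell_q\hookrightarrow\ell_1$ plus completeness of $L_1$ for (1), and for (2) the reduction to (1) via the mixed-norm interchange (Minkowski's integral inequality) underlying the embedding $F^{\Mi,s}_{p,q}\hookrightarrow B^{\Mi,s}_{p,\max(p,q)}$. The only difference is cosmetic: you sketch a proof of the Nikolski\u\i{} inequality inline (reducing it to the $L_\infty$--$L_p$ band-limited estimate), whereas the paper simply cites it from \cite[Prop.~2.13]{Y1}.
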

\begin{proof} In view of \eqref{1.31} the
quasi-homogeneous Nikolski\u\i --Plancherel--Polya inequal\-ity  asserts
that $\norm{u_j}{L_1}\le
C(A2^j)^{\fracci{|\Mi|}p-|\Mi|}\norm{u_j}{L_p}= C_1
2^{sj}\norm{u_j}{L_p}$, cf.~\cite[Prop.~2.13]{Y1}. In (1) the preceding
proposition therefore yields the claim. In (2) it is not a restriction
to assume assume that $p\le q\le 1$, and then one may reduce to case~(1)
cf.~the proof of
$F^{\Mi,s}_{p,q}\hookrightarrow B^{\Mi,s}_{p,\max(p,q)}$.
\end{proof}

It would be interesting and useful to know if the limits in
Proposition~\ref{border-prop} belong to $B^{\Mi,s}_{p,q}$ and
$F^{\Mi,s}_{p,q}$, respectively.

\section{Products of tempered distributions} \label{paramult-sect}

The results for $\mu(f,g)$ will be obtained from the more general
product $\pi(u,v)$ defined\,---\,for each $\M$\,---\,as follows:

\begin{defn} \label{pi-defn}
Let $\psi\in C^\infty_0(\Rn)$ satisfy $\psi(x)=1$ for $x$ in a
neighbourhood of $x=0$, and let $\psi_k(x)=\psi(2^{-k\Mi}x)$.
Denote then, for $u$ and $v\in\cal S'(\Rn)$,
 \begin{equation}
 \pi_\psi(u,v)=\lim_{k\to\infty}\cal F^{-1}(\psi_k\cal Fu)\cdot
                               \cal F^{-1}(\psi_k\cal Fv),
 \label{1.7}
 \end{equation}
when the limit exists in $\cal D'(\Rn)$.

The {\em product\/} $\pi(u,v)$ is defined as $\pi(u,v)=\pi_\psi(u,v)$,
when $\pi_\psi(u,v)$ exists for all such $\psi$ and is independent of $\psi$. 
\end{defn}

Since $\cal F^{-1}\psi\cal Fu$ is a smooth function by the 
Paley--Wiener theorem, the multiplication on the right hand side 
of \eqref{1.7} makes sense. The limit is taken in $\cal D'(\Rn)$ in
order that Definition~\ref{subset-defn} below gives back
Definition~\ref{pi-defn} when it is applied to $\Omega=\Rn$.

\begin{exmp} \label{pi-exmp}   
One has $\pi(\chi,\delta_0)=\frac{1}{2}\delta_0$ (for any $\M$), by
a direct computation, when $\chi(x)=\kd{x_n>0}$ denotes the
characteristic function of the half-space $\Rn_+$ and $\delta_0$ is
the delta measure at the origin in $\Rn$.

On the real line, $\pi(x_+^{-\frac12},x_-^{-\frac12})=
\pi\delta_0$ for the locally integrable functions 
$x_{\pm}^{-\frac{1}{2}}:=\kd{x\lessgtr0}|x|^{-\frac{1}{2}}$, as one
may verify similarly to \cite[Ex.~2.3]{Ober}. 
Thus it may be said that $\pi(\cdot,\cdot)$ differs significantly 
from $\mu(\cdot,\cdot)$, cf.~(I) in the introduction.
\end{exmp}

For the analysis of $\pi(u,v)$ it is convenient to
introduce the para-multiplication operators $\pi_1(\cdot,\cdot)$, 
$\pi_2(\cdot,\cdot)$ and $\pi_3(\cdot,\cdot)$. Let $\psi$ be as in
Definition~\ref{pi-defn}. 

With $u_j=\cal F^{-1}\varphi_j\cal Fu$\,---\,where $\varphi_j=
\psi_j-\psi_{j-1}$ (and $\psi_j\equiv0$ for $j<0$) similarly to 
\eqref{1.15} above\,---\,the operators
$\pi_1,\pi_2$ and $\pi_3$ are defined as follows, cf. \cite{Y1},
 \begin{align} 
 \pi_1(u,v)&=\sum_{j=0}^\infty\, (u_0+\dots+u_{j-2})\cdot v_j,
 \label{1.8}\\
 \pi_2(u,v)&=\sum_{j=0}^\infty\, (u_{j-1}\cdot v_j+u_j\cdot v_j
                +u_j\cdot v_{j-1}),
 \label{1.9}\\
 \pi_3(u,v)&=\sum_{j=0}^\infty\, u_j\cdot (v_0+\dots+v_{j-2})
                 =\pi_1(v,u).
 \label{1.10}
 \end{align}
This technique is due to \name{J.~Peetre} and  \name{H.~Triebel},
see \cite{Pee}, \cite{T-pmlt} and  \cite{T0}. The para-{\em differential\/}
notion was introduced by \name{J.~M.~Bony} in \cite{Bon} and extended in 
\cite{Y1,Y2}. 

Here the series defining $\pi_2$ is
regrouped to have $u_j\cdot v_{j-1}$ instead of $u_{j+1}\cdot
v_j$, for this facilitates the passage to the second line in \eqref{1.11}
below. Moreover, the $\pi_j$ are considered here for each $\psi$
ocurring in Definition~\ref{pi-defn} above.

\begin{lem} \label{pi-lem}
The limit $\pi_\psi(u,v)$ exists and
 \begin{equation}
 \pi_\psi(u,v)=\pi_1(u,v)+\pi_2(u,v)+\pi_3(u,v),
 \label{1.10'}
 \end{equation} 
whenever the series defining $\pi_j(u,v)$ converges in $\cal D'(\Rn)$
for $j=1$, $2$ and $3$ (for a $\psi$ as in Definition~\ref{pi-defn}).
\end{lem}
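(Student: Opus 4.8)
The plan is to show that for every $k$ the truncated product appearing in \eqref{1.7} equals, \emph{exactly}, the sum of the $k$-th partial sums of the three series \eqref{1.8}--\eqref{1.10}; the asserted identity \eqref{1.10'} then follows immediately by passing to the limit. First I would record that, since $\varphi_j=\psi_j-\psi_{j-1}$ telescopes with $\psi_{-1}\equiv0$, the partial sums of the $u_j$ reconstruct the smoothing operators, $\sum_{j=0}^k u_j=\cal F^{-1}(\psi_k\cal Fu)=u^k$, and likewise for $v$. Hence the factor in \eqref{1.7} is the finite double sum
\begin{equation*}
u^k\cdot v^k=\Bigl(\sum_{i=0}^k u_i\Bigr)\Bigl(\sum_{j=0}^k v_j\Bigr)=\sum_{0\le i,j\le k} u_i\cdot v_j .
\end{equation*}

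Next I would partition the index square $\{0,\dots,k\}^2$ according to the value of $j-i$: the region $j-i\ge2$, the diagonal band $|j-i|\le1$, and the region $j-i\le-2$. A direct inspection of the definitions shows these three pieces are precisely the $k$-th partial sums $S_1^k$, $S_2^k$, $S_3^k$ of $\pi_1$, $\pi_2$, $\pi_3$. Indeed, the $\pi_1$-sum $\sum_{j\le k}(u_0+\dots+u_{j-2})v_j$ collects exactly the pairs with $i\le j-2$ (and the constraint $i\le k$ is then automatic since $i\le j-2\le k-2$); $\pi_3=\pi_1(v,u)$ gives the symmetric region $j\le i-2$; and the three terms $u_{j-1}v_j$, $u_jv_j$, $u_jv_{j-1}$ in \eqref{1.9} furnish exactly the band $j-i\in\{-1,0,1\}$ within $\{0,\dots,k\}^2$. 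Since the three regions are disjoint and exhaust the square, one obtains the exact identity $u^k\cdot v^k=S_1^k+S_2^k+S_3^k$ for each $k$, with no leftover terms.

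Finally, under the hypothesis that each of the three series converges in $\cal D'(\Rn)$, the partial sums $S_m^k$ tend to $\pi_m(u,v)$, so the right-hand side converges to $\pi_1(u,v)+\pi_2(u,v)+\pi_3(u,v)$. Consequently $u^k\cdot v^k$ converges in $\cal D'(\Rn)$ as well, which by Definition~\ref{pi-defn} means that $\pi_\psi(u,v)$ exists and equals this sum, proving \eqref{1.10'}. There is no genuine analytic obstacle here: the whole content is the combinatorial bookkeeping verifying that, at a common truncation level $k$, the square splits without remainder into the three diagonal regions. The only point requiring mild care is the off-by-one matching near the diagonal (the band $|j-i|\le1$), where one must check that the ranges of the three summands in \eqref{1.9} line up with the pairs of the square; convergence in $\cal D'$ is then inherited term-for-term from the hypothesis, so no estimates are needed.
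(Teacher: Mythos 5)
Your proof is correct and is essentially the paper's own argument: both rest on the exact combinatorial identity $u^k\cdot v^k=S_1^k+S_2^k+S_3^k$ at each truncation level $k$ (the paper obtains it by collecting the $j$-th terms of the three series into the gnomons $(u_0+\dots+u_j)\cdot v_j+u_j\cdot(v_0+\dots+v_{j-1})$, you by partitioning the index square according to the value of $j-i$), after which one passes to the limit in $\cal D'(\Rn)$. The diagonal bookkeeping you flag is exactly why the paper regroups $\pi_2$ to contain $u_j\cdot v_{j-1}$ instead of $u_{j+1}\cdot v_j$, and your verification of it is accurate.
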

\begin{proof} It is found by the construction of the 
$\varphi_j$ that $\varphi_0+\dots+\varphi_k=\psi_k$, so 
 \begin{equation}
  \begin{aligned}
 \pi_1(u,v)+\pi_2(u,v)+\pi_3(u,v)&=\lim_{k\to\infty}
 \sum_{j=0}^k\,((u_0+\dots+u_j)\cdot v_j 
 \\[-3.5\jot]
 &\hphantom{=\lim_{k\to\infty}\sum_{j=0}^k\,((u_0+}
  \  +u_j\cdot(v_0+\dots+v_{j-1}))
 \\
 &=\,\smash[b]{\lim_{k\to\infty}}\cal F^{-1}(\psi_k\cal Fu)\cdot
                               \cal F^{-1}(\psi_k\cal Fv).
 \end{aligned}
 \label{1.11}
 \end{equation}
This proves \eqref{1.10'}.
\end{proof}

Hence, when each $A_0,A_1$ and $A_2$ is chosen independently 
as a Besov space or as a Triebel--Lizorkin space and the
$\pi_j(\cdot,\cdot)$ are continuous $A_0\oplus A_1\to A_2$, it follows
that $\pi_\psi(\cdot,\cdot)\colon A_0\oplus A_1\to A_2$ is a continuous 
bilinear operator.

Then when $A_0\hookrightarrow L_{p}$ and $A_1\hookrightarrow
L_{q}$ with $0\le\fracc{1}{p}+\fracc{1}{q}\le1$, e.g., Proposition~%
\ref{lp-prop} below yields $\pi_\psi=\mu$ (and hence
$\psi$-independence), and thus $\mu\colon A_0\oplus A_1\to A_2$ is continuous.

In general it is necessary to verify that the results obtained for
$\pi_\psi$ by use of \eqref{1.10'} do not depend on $\psi$. 
In Section~\ref{indp-ssect} below it is seen that when the $A_j$ are
Besov or Triebel--Lizorkin spaces,
$\pi_\psi(\cdot,\cdot)\colon A_0\oplus A_1\to A_2$ is an extension by 
continuity of (a restriction of) $\mu$ or of the product on 
$\cal O_M\times\cal S'$. For this reason we shall not emphasise 
the $\psi$-dependence of $\pi_1$, $\pi_2$  and $\pi_3$ in the following.

\bigskip

The applications of Theorems~\ref{Y1-thm} and \ref{Y2-thm} to the 
$\pi_j$ are contained in Section~\ref{estm-sect} below. However,
here it is observed that the spectral 
conditions in Theorems~\ref{Y1-thm} and \ref{Y2-thm} are
satisfied by the terms in the sums in \eqref{1.8}, \eqref{1.9} 
and \eqref{1.10} (recall \eqref{1.16} ff.): 

If $[\xi]\le r\Rightarrow \psi(\xi)=1$ and 
$[\xi]>R\Rightarrow\psi(\xi)=0$, then the identity
$[t^{\Mi}\xi]=t[\xi]$ gives $\supp\psi_k\subset\{\,\xi\mid [\xi]\le
R2^{k}\,\}$ while $\supp\varphi_k\subset\{\,\xi\mid
r2^{k-1}\le[\xi]\le R2^k\,\}$. So for $u$ and $v\in\cal S'$ and $j\in\N_0$,
 \begin{gather} 
 \supp \cal F(u^{j-2} v_j+u_j v^{j-2})\subset
 \bigl\{\,\xi\bigm| |\tfrac{R}{4}-\tfrac{r}{2}|2^k\le[\xi]\le
 \tfrac{5R}{4} 2^k\,\bigr\},
 \label{3.5'}\\[\jot]
 \supp \cal F(u_{j-1} v_j+u_j v_j+u_j v_{j-1})\subset
 \bigl\{\,\xi\bigm| [\xi]\le R2^{k+1}\,\bigr\},
 \label{3.6}
 \end{gather}
since $\supp\cal F(u_j\cdot v_k)\subset\supp\varphi_j\hat u+
\supp\varphi_k\hat v$ and 
$|[\xi]-[\eta]|\le[\xi+\eta]\le[\xi]+[\eta]$. Note that
$R\ne 2r$ yields $|\tfrac{R}{4}-\tfrac{r}{2}|>0$.

\begin{rem} \label{PSI-rem}
It will be convenient later on to assume that $\psi=\Psi_0$, where
$\Psi_0$ is defined in \eqref{1.14}. For this purpose it is observed
that Theorem~\ref{Y1-thm} gives the inequality
 \begin{equation}
 \Norm{ \{2^{sj}\norm{\cal F^{-1}\varphi_j\cal F u}{L_p}\}_{j=0}^\infty
    }{\ell_q}\le c\norm{u}{B^{\Mi,s}_{p,q}}
 \end{equation}
for a constant $c$ independent of $u$ and a similar inequality
for the $F^{\Mi,s}_{p,q}$ spaces.
\end{rem}

\begin{rem}  \label{indp-rem}
The product $\pi(u,v)$ is by definition obtained by a
simultaneous regularisation of both factors. On one hand it may be
seen as in \cite[Ex.~2.3]{Ober} that regularisation of one
factor only gives a limit depending on $\psi$ when one considers the
product of $x_+^{-\frac12}$ and $x_-^{-\frac12}$.
 
On the other hand, this dependence may occasionally
disappear by regularising both
factors by means of the same $\psi$ (as is the case with the
product of $x_+^{-\frac12}$ and $x_-^{-\frac12}$), but even then the limit
$\pi_\psi(u,v)$ may still depend on $\psi$ in some cases. For this
reason $\psi$-independence is required in Definition~\ref{pi-defn}. 

For a thourough discussion of such questions for the
Antosik--Mikusi\'nski--Sikorski product one may consult
\cite[Ch.~2]{Ober}. For this product there is in \cite{ColOber90}
given an example of $\psi$-dependence, namely when multiplying
$H\otimes\delta_0$ and $\delta_0\otimes H$ in dimension $n=2$, 
but the example does not carry over with its conclusions to 
$\pi(\cdot,\cdot)$. 
\end{rem}

\subsection{Relations to other products} \label{pi-ssect}
Recall that the usual product $\cal E(\Rn)\times\cal D'(\Rn)
\to \cal D'(\Rn)$ restricts to a bilinear operator
 \begin{equation}
 \cal O_M(\Rn)\times\cal S'(\Rn)\to \cal S'(\Rn),
 \label{-3.1}
 \end{equation}
where $\cal O_M$ denotes the spaces of {\em slowly increasing\/}
functions. With $\ang{x}=(1+|x|^2)^{\frac{1}{2}}$
 \begin{equation}
 \cal O_M(\Rn)=\bigl\{\,f\in\cal E(\Rn)\bigm| \forall\alpha\in\N_0^n
\exists a,c>0: |D^\alpha f(x)|\le c\ang{x}^a\,\bigr\}.
 \label{-3.2}   
 \end{equation}
($\cal O_M(\Rn)$ consists of the pointwise multipliers of
$\cal S(\Rn)$ and $\cal S'(\Rn)$.)

\begin{prop} \label{slw-prop}
Let $\psi\in\cal S(\Rn)$ with $\psi(0)=1$ and a weight $\M$ be given.
Then 
 \begin{equation}
 \cal F^{-1}(\psi_k\cal Ff)\cdot\cal F^{-1}(\psi_k\cal Fu)
 \xrightarrow[k\to\infty]{\ } fu \quad\text{in}\quad\cal D'(\Rn)
 \label{-3.3}
 \end{equation}
for every $f\in\cal O_M(\Rn)$ and every $u\in\cal S'(\Rn)$. 

In particular $\pi(f,u)$ is defined and equal to $fu$.
\end{prop}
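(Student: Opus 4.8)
The plan is to establish the convergence in \eqref{-3.3} directly, after which the second assertion follows immediately from Definition~\ref{pi-defn}: since the limit $fu$ does not involve $\psi$ beyond the requirement $\psi(0)=1$, the limit will be $\psi$-independent, so $\pi(f,u)$ is defined and equals $fu$. The key reduction is to write $f_k:=\cal F^{-1}(\psi_k\cal Ff)$ and $u_k:=\cal F^{-1}(\psi_k\cal Fu)$ and to test against an arbitrary $\varphi\in C^\infty_0(\Rn)$. Since $f\in\cal O_M$ is an ordinary (smooth, slowly increasing) function, I would exploit that $fu\in\cal S'$ is a genuine tempered distribution and split
 \begin{equation}
 \dual{f_k u_k}{\varphi}-\dual{fu}{\varphi}
 =\dual{(f_k-f)u_k}{\varphi}+\dual{f(u_k-u)}{\varphi}.
 \label{plan-split}
 \end{equation}
The second term is handled first and is the easy half: $f\varphi\in C^\infty_0(\Rn)\subset\cal S(\Rn)$, and $u_k=\cal F^{-1}(\psi_k\hat u)\to u$ in $\cal S'$ because $\psi_k\to\psi(0)=1$ pointwise and boundedly, so $\psi_k\hat u\to\hat u$ in $\cal S'$; hence $\dual{f(u_k-u)}{\varphi}=\dual{u_k-u}{f\varphi}\to0$.

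The first term in \eqref{plan-split} is where the work lies. Here I need two facts: that $f_k\to f$ in a sense strong enough to pair against $u_k\varphi$, and that the factors $u_k\varphi$ stay in a bounded (equicontinuous) subset of $\cal S$. For the former, $f\in\cal O_M$ and $\psi(0)=1$ give $f_k\to f$ together with all derivatives, uniformly on compact sets, with the $f_k$ and their derivatives uniformly slowly increasing; this is the standard regularisation statement for $\cal O_M$ and I would invoke it as known, since the restriction \eqref{-3.1} of the $\cal E\times\cal D'$ product to $\cal O_M\times\cal S'$ is exactly the continuity being used. For the latter, I would write $\dual{(f_k-f)u_k}{\varphi}=\dual{u_k}{(f_k-f)\varphi}$ and observe that $(f_k-f)\varphi$ is supported in the fixed compact set $\supp\varphi$ and tends to $0$ in $\cal S(\Rn)$ (indeed in $C^\infty_0$ with fixed support, where convergence of all derivatives is uniform because $f_k\to f$ in $C^\infty$ on that compact set). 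Since $\{u_k\}$ converges in $\cal S'$, it is in particular a bounded set in $\cal S'$, i.e.\ equicontinuous by the Banach--Steinhaus theorem, so $\dual{u_k}{(f_k-f)\varphi}\to0$ uniformly; this kills the first term.

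The main obstacle is the first term, and specifically the need to combine a convergence and a boundedness statement: neither $f_k-f\to0$ nor $u_k$ bounded suffices alone, so the argument rests on pairing the equicontinuity of $\{u_k\}$ in $\cal S'$ against the convergence $(f_k-f)\varphi\to0$ in $\cal S$. I would make sure to record that this equicontinuity is legitimate precisely because $\{u_k\}$ converges (hence is weakly bounded, hence strongly bounded, hence equicontinuous) in the dual of the Fréchet space $\cal S$. With both terms of \eqref{plan-split} shown to vanish as $k\to\infty$, the convergence \eqref{-3.3} follows for every $\varphi\in C^\infty_0(\Rn)$, which is exactly convergence in $\cal D'(\Rn)$; the in-particular statement is then immediate.
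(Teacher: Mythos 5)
Your proposal is correct and takes essentially the same route as the paper: the algebraically identical splitting into $\dual{u_k}{(f_k-f)\varphi}+\dual{u_k-u}{f\varphi}$, the Banach--Steinhaus equicontinuity of $\{u_k\}$ on $\cal S$ to reduce the problem to showing $(f_k-f)\varphi\to0$ in $\cal S(\Rn)$, and the standard convergence $u_k\to u$ in $\cal S'$ for the second term. The only difference is presentational: you invoke as known the regularisation fact that $f_k\to f$ with all derivatives uniformly on compact sets for $f\in\cal O_M$, whereas the paper proves exactly this statement explicitly via a mean-value estimate adapted to the anisotropic dilation (using $|2^{-k\Mi}z|\le2^{-k}|z|$ and the polynomial bounds on $\grad f$).
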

\pagebreak[5]
\begin{proof} From $\ang{x-y}^s\le c_s\ang{x}^s\ang{y}^s$, valid for 
$s>0$, it follows that $f^k=\check\psi_k*f$ is an element of $\cal O_M(\Rn)$, 
so $f^ku^k=(\check\psi_k*f)\cdot
\cal F^{-1}(\psi_k\hat u)$ makes sense in $\cal S'(\Rn)$.

For each $\varphi\in C^\infty_0(\Rn)$, it suffices for the convergence
 \begin{equation}
 \dual{f^ku^k}{\varphi}-\dual{fu}{\varphi}
 =\dual{u^k}{f^k\varphi-f\varphi}+ \dual{u^k-u}{f\varphi}
 \xrightarrow[k\to\infty]{\ } 0, 
 \label{-3.6}
 \end{equation}
that $f^k\varphi\to f\varphi$ in $\cal S(\Rn)$ for
$k\to\infty$, for the family $\{u^k\}_{k\in\N}$ of operators $\cal
S(\Rn)\to \C$ is equicontinuous. Thus we have arrived at a ``linear'' 
problem.

However, that $f^k\varphi\to f\varphi$ in $\cal S(\Rn)$ follows if 
$\sup\bigl\{\,|f^k(x)-f(x)|\bigm| x\in K\,\bigr\}\to 0$
for $k\to\infty$ for every $f\in\cal O_M$, when $K:=\supp\varphi$. 
Concerning the latter convergence we may assume $f$ to be real so that   
 \begin{equation}
 |\check\psi_k*f(x)-f(x)|
 \le\int|\check\psi(z)||\grad f(x-\theta(z)2^{-k\Mi}z)\cdot2^{-k\Mi}z)|\,dz,
 \label{-3.7}
 \end{equation}
where $|2^{-k\Mi}z|\le 2^{-k}|z|$.
Since $\theta(z)\in\,]0,1[\,$, the estimate $|\grad
f(x-\theta(z)2^{-k\Mi}z)|\le c_N\ang{x}^N\ang{\theta(z)2^{-k\Mi}z}^N\le
c_N\ang{x}^N\ang{z}^N$ holds for a big $N\in\N$. Hence
 \begin{equation}
 \sup_{x\in K}|f^k(x)-f(x)|\le  2^{-k}c_N\sup_{x\in K}\ang{x}^N 
 \int\ang{z}^{N+1}|\check\psi(z)|\,dz,
 \label{-3.8}   
 \end{equation}
and it is seen that the left hand side tends to zero for $k\to\infty$.
\end{proof}

The result above generalises the observation made in \cite{S} that
$\pi(f,u)=fu$ when $f\in\cal S(\Rn)$ and $u\in\cal S'(\Rn)$.

Since $u$ and $v$ in Definition~\ref{pi-defn} are assumed only to lie
in $\cal S'(\Rn)$, one may now ask for stricter conditions on $u$
which allows $f$ to be more general than an element of the space $\cal
O_M(\Rn)$.

Carried to the extreme, when $u\in L_{p,\op{loc}}\cap\cal S'$ it is possible to
take $f\in L_{q,\op{loc}}\cap\cal S'$ provided only that $\fracp+\fracc1q\le1$.
Before we show this in Proposition~\ref{lp-prop} below,
the next result on a local property (of $\pi$) is included as a preparation.

\begin{prop} \label{local-prop} Let $\psi\in\cal S(\Rn)$ with
$\psi(0)=1$ and let $\M$ be a weight.
 
If $u$ and $v\in\cal S'(\Rn)$ and $\Omega\subset\Rn$ is an open set 
such that either $r_\Omega u=0$ or $r_\Omega v=0$, then
 \begin{equation}
 r_\Omega\cal F^{-1}(\psi_k\cal Fu)\cdot\cal F^{-1}(\psi_k\cal F v)
 \to 0\quad\text{ in}\quad\cal D'(\Omega)\quad\text{ for}\quad k\to\infty.
 \label{3.1'}
 \end{equation}
In particular, $r_\Omega\pi(u,v)=0$ when $\pi(u,v)$ is defined.
\end{prop}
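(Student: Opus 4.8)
The plan is to write $u^k:=\cal F^{-1}(\psi_k\cal Fu)=\check\psi_k*u$ and $v^k:=\check\psi_k*v$, where $\check\psi_k=2^{k|\M|}\check\psi(2^{k\Mi}\cdot)$ is an approximate unit because $\psi(0)=1$, and to exploit that the pointwise product $u^k v^k$ is symmetric, so that I may assume $r_\Omega u=0$. First I would fix $\varphi\in C^\infty_0(\Omega)$, set $K=\supp\varphi$, and choose $\chi\in C^\infty_0(\Omega)$ with $\chi\equiv1$ on a neighbourhood of $K$; let $\delta>0$ be the distance from $K$ to $\{\,\chi\ne1\,\}$. For each fixed $x$ the function $y\mapsto\chi(y)\check\psi_k(x-y)$ lies in $C^\infty_0(\Omega)$, so $r_\Omega u=0$ yields $\ang{u,\chi\check\psi_k(x-\cdot)}=0$ and hence the key identity
\[
\partial^\beta_x u^k(x)=\ang{u,(1-\chi)\,\partial^\beta_x[\check\psi_k(x-\cdot)]}\qquad(\beta\in\N_0^n),
\]
which transfers the vanishing of $u$ on $\Omega$ into a spatial decay of the mollification on $K$.

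The step I expect to be the main obstacle is the estimate
\[
\sup_{x\in K}|\partial^\beta_x u^k(x)|\le C_{\beta,L}\,2^{-kL}\qquad\text{for every $\beta$ and every $L$.}
\]
On $\supp(1-\chi)$ one has $|x-y|\ge\delta$ for all $x\in K$, so in $|\check\psi_k(x-y)|=2^{k|\M|}|\check\psi(2^{k\Mi}(x-y))|$ I would combine $|2^{k\Mi}w|\ge2^k|w|$ with the rapid decay of $\check\psi$; taking the decay order large enough absorbs the factor $2^{k|\M|}$, the factors $2^{k(\beta\cdot\M)}$ generated by the $x$-derivatives, and\,---\,the only genuinely delicate feature\,---\,the polynomial growth $|y^\alpha|$ stemming from the \emph{non-compact} support of $1-\chi$ (for large $y$ one has $|x-y|\ge|y|/2$ on $K$, so the Schwartz decay still wins). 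This makes every seminorm $\norm{(1-\chi)\partial^\beta_x[\check\psi_k(x-\cdot)]}{\cal S,N}$ tend to $0$ faster than any power of $2^{-k}$, uniformly in $x\in K$. Since $u\in\cal S'$ satisfies $|\ang{u,\theta}|\le C\norm{\theta}{\cal S,N}$ for suitable $C,N$, the displayed bound follows, and in particular $u^k\varphi\to0$ in $\cal S(\Rn)$ faster than any power of $2^{-k}$.

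Finally I would conclude as follows. Since $\psi(0)=1$, the family $\{v^k\}_k$ converges to $v$ in $\cal S'(\Rn)$ and is therefore equicontinuous on $\cal S(\Rn)$ (as already noted for the analogous family in the proof of Proposition~\ref{slw-prop}): there are $C,N$ with $|\ang{v^k,\theta}|\le C\norm{\theta}{\cal S,N}$ for all $k$. Writing $\ang{u^k v^k,\varphi}=\ang{v^k,u^k\varphi}$ and inserting the rapid decay of $u^k\varphi$ gives $|\ang{u^k v^k,\varphi}|\le C\norm{u^k\varphi}{\cal S,N}\to0$, which is \eqref{3.1'}; the case $r_\Omega v=0$ is identical by symmetry. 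For the last assertion it suffices to note that every $\psi$ admitted in Definition~\ref{pi-defn} lies in $\cal S(\Rn)$ with $\psi(0)=1$, so the above applies; hence, whenever $\pi(u,v)=\pi_\psi(u,v)$ is defined, $r_\Omega\pi(u,v)=\lim_{k\to\infty}r_\Omega(u^k v^k)=0$.
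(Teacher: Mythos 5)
Your proof is correct and follows essentially the same route as the paper's: reduce, via equicontinuity of $\{v^k\}$, to showing that $u^k\varphi\to0$ in $\cal S(\Rn)$, and prove this by inserting a cutoff separating $\supp\varphi$ from the region where $u$ can act, then exploiting the rapid decay of the dilated kernel $\check\psi_k$ on that region. The only difference is bookkeeping of derivatives: you let $\partial^\beta_x$ fall on $\check\psi_k$ and absorb the resulting growth factors $2^{k\M\cdot\beta}$ by taking the decay order large enough, whereas the paper in effect puts the derivatives on $u$ (whose derivatives also restrict to zero on $\Omega$), so the two estimates are interchangeable.
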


\begin{proof} It can be assumed that $r_\Omega u=0$, for if not the
roles of $u$ and $v$ can be interchanged. It suffices 
for each $\varphi\in e_\Omega C^\infty_0(\Omega)$ to show the convergence
 \begin{equation}
 \dual{r_\Omega u^kv^k}{r_\Omega\varphi}=
 \dual{v^k}{u^k\varphi}\xrightarrow[k\to\infty]{\ }0.
 \label{3.1}
 \end{equation}
By equicontinuity and the relation $\dual{v^k}{u^k\varphi}=
\dual{v^k-v}{u^k\varphi}+\dual{v}{u^k\varphi}$, \eqref{3.1} follows if
$u^k\varphi\to0$ in $\cal S(\Rn)$ for $k\to\infty$.

For completeness' sake we supply a proof of the fact that
$(\psi_k*u)\cdot\varphi\to 0$ in $\cal S(\Rn)$ for $k\to\infty$, 
when $u\in\cal S'(\Rn)$ and $\psi\in\cal S(\Rn)$ are arbitrary such
that $r_\Omega u=0$, and $\psi_k(x)=2^{k|\Mi|}\psi(2^{k\Mi}x)$.
It suffices to show, for $K=\supp\varphi\subset\Omega$, that
 \begin{equation}
 \sup\bigl\{\,|\psi_k*u|\bigm| x\in K\,\bigr\}\to0\quad\text{ for}\quad
 k\to\infty,
 \label{3.3}
 \end{equation}
for then $\norm{(\psi_k*u)\cdot\varphi}{\cal S,\alpha,\beta}\to0$ for
every $\alpha$ and $\beta$. 

With closed sets $F_1$ and $F_2$ satisfying 
$K\subset F_1^\circ\subset F_1\subset F_2^\circ\subset
F_2\subset\Omega$, we take $\eta\in C^\infty(\Rn)$ such that $\eta=1$ on
$\Rn\setminus F_2^\circ$ and $\eta=0$ on $F_1$. Then
 \begin{equation}
 |\psi_k*u|=|\ang{u,\eta\cdot\psi_k(x-\cdot)}|
 \le\,c \norm{\eta\psi_k(x-\cdot)}{\cal S,\alpha,\beta}
 \label{3.4}
 \end{equation}
For each $x\in K$ it follows here that $\eta(y)\psi_k(x-y)=0$ when 
$|x-y|<a:=\op{dist}(K,\Rn\setminus F_1^\circ)$. Because 
$2^k|\cdot|\le|2^{k\Mi}\cdot|$ one has 
$|z|<2^ka\Rightarrow |x-y|<a$, when $z=2^{k\Mi}(x-y)$. Moreover 
$2^{kb}|x-y|^b\le|z|^b$ holds for $b>0$, so for each $\gamma\le\beta$ 
\begin{equation}
 \begin{split}
 |y^\alpha D^{\beta-\gamma}_y\eta D^\gamma_y\psi_k(x-\cdot))|&\le
 |D^{\beta-\gamma}\eta|\cdot2^{|\alpha|}(|x-y|^{|\alpha|}+|x|^{|\alpha|})
 \\
 &\hphantom{\le|D^{\beta-\gamma}}
 \times 2^{k(|\Mi|+\Mi\cdot\gamma)}|D^\gamma\psi(z)|\cdot\kd{|z|\ge2^{k}a}
  \\
 &\le\,2^{|\alpha|}a^{-|\Mi|-\Mi\cdot\gamma}
 \norm{\eta}{C^{|\beta|}}\norm{\psi}{\cal S,|\alpha|+\M\cdot\gamma+|\M|+1}
  \\
 &\hphantom{\le|D^{\beta-\gamma}}
 \times(2^{-k|\alpha|}+\sup_{x\in K}|x|^{|\alpha|})
 \sup_{z\in\Rn}|z|^{-1}\kd{|z|\ge2^ka}.
 \end{split}  
 \label{3.5}
\end{equation}
Because the last factor $\to0$ for $k\to\infty$ the convergence in
\eqref{3.3} is inferred from Leibniz' formula, \eqref{3.5} and \eqref{3.4}.
 \end{proof}

In Section \ref{subset-sect} the full generality of the result 
above will be used 
to define the product $\pi$ on an open set $\Omega$. Here Proposition
 \ref{local-prop} is used to prove the next result, where the simple case 
with $f\in L_p$, $g\in L_q$ and $\fracp+\fracc1q=1$ is included in \cite{S}.

\begin{prop} \label{lp-prop}
Let $\psi\in\cal S(\Rn)$ with
$\psi(0)=1$ and a weight $\M$ be given.
 
For $f\in L_{p,\op{loc}}(\Rn)\cap\cal S'(\Rn)$ and 
$g\in L_{q,\op{loc}}(\Rn)\cap\cal S'(\Rn)$ such that 
$\fracc1r:=\fracp+\fracc1q\le1$ there is convergence $f^kg^k\to fg$ in
$\cal D'(\Rn)$.
In particular $\pi(f,g)$ is defined and
 \begin{equation}
 \pi(f,g)=f(x)\cdot g(x)\in L_{r,\op{loc}}
 \label{3.2'}
 \end{equation}
for such $f$ and $g$.
\end{prop}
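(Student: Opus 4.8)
The plan is to fix an arbitrary $\psi$ as in the statement and to prove the single assertion $f^kg^k\to fg$ in $\cal D'(\Rn)$, where $f^k=\cal F^{-1}(\psi_k\cal Ff)$ and $g^k=\cal F^{-1}(\psi_k\cal Fg)$; everything else then follows formally. Indeed the hypothesis $\fracp+\fracc1q=\fracc1r\le1$, together with $\fracc1q\ge0$, forces $p\ge1$ and likewise $q\ge1$, so H\"older's inequality gives $fg\in L_{r,\op{loc}}\subset L_{1,\op{loc}}\subset\cal D'(\Rn)$ and the right-hand side of \eqref{3.2'} is a well-defined distribution. Since the limit $fg$ is manifestly independent of $\psi$, once the convergence is established for every $\psi\in\cal S(\Rn)$ with $\psi(0)=1$ it follows that $\pi_\psi(f,g)$ exists, is $\psi$-independent, and equals $fg$, which is \eqref{3.2'}. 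Throughout I write $f^k=\check\psi_k*f$ with $\check\psi_k(x)=2^{k|\Mi|}\check\psi(2^{k\Mi}x)$ and $\check\psi=\cal F^{-1}\psi\in\cal S(\Rn)$; this is an approximate identity, since $\int\check\psi_k\,dx=\psi(0)=1$, it is bounded in $L_1(\Rn)$, and $\check\psi_k*h\to h$ in $L_a(\Rn)$ for every $h\in L_a(\Rn)$ with $1\le a<\infty$ (the mass escapes to infinity because $2^k|x|\le|2^{k\Mi}x|$, exactly as in the proof of Proposition~\ref{local-prop}).

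First I would reduce to compactly supported factors by means of Proposition~\ref{local-prop}. Fix $\varphi\in C^\infty_0(\Rn)$, put $K=\supp\varphi$, choose a bounded open set $\omega\supset K$ and a cut-off $\zeta\in C^\infty_0(\Rn)\subset\cal O_M(\Rn)$ with $\zeta\equiv1$ on $\overline\omega$, and split $f=f_1+f_2$, $g=g_1+g_2$ with $f_1=\zeta f$, $g_1=\zeta g$. Then $f_1\in L_p(\Rn)$ and $g_1\in L_q(\Rn)$ have compact support, while $f_2$ and $g_2$ vanish on $\omega$. Expanding $f^kg^k=f_1^kg_1^k+f_1^kg_2^k+f_2^kg_1^k+f_2^kg_2^k$ (using linearity of $f\mapsto f^k$), Proposition~\ref{local-prop} shows that the last three products tend to $0$ in $\cal D'(\omega)$, since in each a factor has vanishing restriction to $\omega$. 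As $fg=f_1g_1$ on $\omega\supset K$, the task is reduced to proving $\langle f_1^kg_1^k,\varphi\rangle\to\langle f_1g_1,\varphi\rangle$; that is, I may assume from now on that $f\in L_p(\Rn)$ and $g\in L_q(\Rn)$ have compact support.

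For such $f$ and $g$ I would write
\[
 f^kg^k-fg=(f^k-f)(g^k-g)+(f^k-f)g+f(g^k-g),
\]
pair each term with $\varphi$, and estimate by H\"older's inequality. For the last two terms the conjugate exponents are dictated by $\fracp+\fracc1q\le1$: since $g\varphi\in L_{p'}$ (because $p'\le q$ on the compact set $\supp\varphi$) and $\check\psi_k*f\to f$ in $L_p$ when $p<\infty$, one gets $\int(f^k-f)g\varphi\to0$, and symmetrically $\int f(g^k-g)\varphi\to0$ when $q<\infty$; the cross term $(f^k-f)(g^k-g)$ is handled by a third H\"older pairing with finite exponents $a\le p$ and $a'\le q$, which exist precisely because $\fracp+\fracc1q\le1$, with one factor tending to $0$ in $L_a$ and the other staying bounded in $L_{a'}$.

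The main obstacle is the presence of an infinite integral-exponent. When $p=\infty$ the mollification $f^k$ does not converge to $f$ in $L_\infty$, so for the term $\int(f^k-f)g\varphi$ I would instead transpose the convolution by Fubini, $\int(f^k-f)\,g\varphi=\int f\,\bigl(\widetilde{\check\psi}_k*(g\varphi)-g\varphi\bigr)$ with $\widetilde{\check\psi}_k(x)=\check\psi_k(-x)$, and use that $g\varphi\in L_1(\Rn)$ together with $\widetilde{\check\psi}_k*(g\varphi)\to g\varphi$ in $L_1$ and $f\in L_\infty$; the term with $q=\infty$ is symmetric. In the extreme cases $\{p,q\}=\{1,\infty\}$ the cross term is controlled by pairing the uniformly bounded factor $\check\psi_k*f-f$ in $L_\infty$ against the factor $(g^k-g)\varphi$ tending to $0$ in $L_1$. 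Collecting the three estimates gives $\langle f^kg^k,\varphi\rangle\to\langle fg,\varphi\rangle$ for every $\varphi$, hence $f^kg^k\to fg$ in $\cal D'(\Rn)$, which completes the argument and yields \eqref{3.2'}.
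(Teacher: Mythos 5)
Your proof is correct, and its skeleton is the same as the paper's: localization by a smooth cut-off combined with Proposition~\ref{local-prop} to dispose of the terms in which one factor vanishes near the support of the test function, followed by the identity $f^kg^k-fg=(f^k-f)(g^k-g)+(f^k-f)g+f(g^k-g)$, approximate-identity convergence in $L_a$ for finite $a$, and H\"older's inequality. The order of the two steps is reversed (the paper first proves the global case $f\in L_p$, $g\in L_q$ with $p,q<\infty$ and then localizes; you localize first), but that is cosmetic. The one genuine difference is your explicit treatment of an infinite integral-exponent: transposing the mollifier onto $g\varphi\in L_1$ via Fubini, and pairing the merely bounded factor $f^k-f$ in $L_\infty$ against $(g^k-g)\varphi\to0$ in $L_1$ for the cross term. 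This is a real refinement, since the paper's ``special case'' is stated only for $p,q<\infty$, and its reduction step invokes that case for the compactly supported pieces $\eta_0f$, $\eta_0g$ while keeping the original exponents. When $p=\infty$ one can lower the exponent of $\eta_0f$ (compact support places it in every finite $L_{p'}$), which rescues all cases except $(p,q)=(\infty,1)$ and $(1,\infty)$, where no finite choice of H\"older-conjugate exponents exists; there your duality argument supplies exactly the missing step, so your write-up is in fact more complete at this endpoint than the paper's own proof.
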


\begin{proof} First we assume that $f\in L_p$, $g\in L_q$ and that 
$p$ and $q<\infty$. It follows (by inspection of the usual convolution
proofs, where $\M=(1,\dots,1)$ and $\cal F^{-1}\psi\ge0$) 
that $f^k\to f$ in $L_p$ when $f\in L_p$ for $1\le p<\infty$.
Similarly  $g^k\to g$ in $L_q$.  Then
 \begin{equation}
 f^k g^k-f g=  (f^k-f)(g^k-g)+(f^k-f) g+f(g^k-g)
 \label{3.3'}
 \end{equation}
and \eqref{1.2} imply that $\pi(f,g)$ is defined as an element
of $L_r\hookrightarrow\cal D'(\Rn)$.

In general, let $\varphi\in C_0^\infty(\Rn)$ be given
and take $\eta_0\in C_0^\infty(\Rn)$ such that $\eta_0=1$ on
$\Omega\supset\supp \varphi$, where $\Omega$ is open and bounded. Then
it is found with $\eta_1=1-\eta_0$ that 
 \begin{equation}
  r_\Omega f^kg^k=r_\Omega(\eta_0 f)^k(\eta_0 g)^k+
 r_\Omega\bigl((\eta_1 f)^k(\eta_0 g)^k
 +f^k(\eta_1 g)^k \bigr),
 \label{3.4'} 
 \end{equation}
where the second term on the right hand side goes to zero in
$\cal D'(\Omega)$ by Proposition~\ref{local-prop}, 
while the first term converges to 
$r_\Omega(\eta_0 f\cdot\eta_0 g)$ in virtue of the special case treated above. 
Hence $\dual{f^kg^k}{\varphi}\to \dual{f\cdot g}{\varphi}$.
\end{proof}

Propositions \ref{slw-prop} and \ref{lp-prop} are used in
Section~\ref{indp-ssect} below to obtain $\psi$-independence in  
connection with the application of para-multiplication.

\begin{rem} \label{cmpr-rem}
In Definition~\ref{pi-defn} above the mollifiers $\psi$ were required
to have compact support and to equal 1 on a neighbourhood of the
origin. This is because the validity of the spectral conditions
in formulae \eqref{3.5'}--\eqref{3.6} (cf. the numbers $r$ and $R$ 
there) is crucial for the
application of Yamazaki's theorems to the operators
$\pi_j(\cdot,\cdot)$ in Sections~\ref{estm-sect} and \ref{suff-sect} below,
cf.~also Lemma~\ref{pi-lem}.

An introduction of further restrictions on the $\psi$'s, say, positivity or
dependence on $[\xi]$ alone, leads to a product
defined on a larger subset of $\cal S'(\Rn)\times\cal S'(\Rn)$, of
course. However, \eqref{3.5'} and \eqref{3.6} remain valid, so the
method of para-multiplication applies to such more general products also.

In contrast to this, by taking $\psi$ merely in $\cal S(\Rn)$ or just
satisfying $\psi(0)=1$ one obtains a product that is a restriction
of $\pi(\cdot,\cdot)$. To this restriction Propositions~\ref{slw-prop} and
\ref{lp-prop} apply, but since \eqref{3.5'} and \eqref{3.6} do not
hold, it is not clear whether para-multiplication may be used to
analyse such products. 

The observations above serve as a justification of the definition of
$\pi(\cdot,\cdot)$, which may also be seen as an $\cal S'$-version of
the Antosik--Mikusi\'nski--Sikorski model product. The relation
to this product, or to its restrictions such as the duality product, 
the wave front product etc., is not clear. The reader is referred to 
\name{Oberguggenberger's} book \cite{Ober} and \name{A.~Kaminski}
\cite{Kam82} and the references there. 
\end{rem}

\section{Necessary conditions for multiplication} \label{ncss-sect}

In this section conditions necessary for boundedness of 
$\mu$ and $\pi$ as bilinear operators $A_0\oplus A_1\to A_2$ are proved. 
But first we include a lemma concerning  the existence of  auxiliary 
functions with convenient norms.

To prepare for this, observe that there exist $\rho$,
$\theta$ and $\omega$ in $\cal S(\Rn)\setminus\{0\}$ for which:
 \begin{equation}
 \begin{aligned}
 \supp \cal F\theta &\subset\{\,\xi\mid[\xi]\le\tfrac1{20}\,\},\quad
 \text{ and $\theta(0)=1$}, \\
 \supp \cal F\!\rho &\subset\{\,\xi\mid\tfrac34\le[\xi]\le1\,\},\quad
 \text{ and $\rho$ is real valued}, \\
 \supp \cal F \omega &\subset\{\,\xi\mid\tfrac34\le[\xi]\le1\,\}\cap B,
 \quad\text{ and $\omega(0)=1$}, 
 \end{aligned}
 \label{2.1}
 \end{equation}
for $B=\{\,\xi\mid[\xi-\zeta]\le\tfrac3{10}\,\}$, with $\zeta=
(\zeta_j)_{j=1,\dots,n}$ for $\zeta_j=\kd{j=j_0}$, where $j_0$
is chosen so that $m_{j_0}=1$ (cf.~the assumptions on $\M$).

The functions may, for example, be constructed 
as $\hat\theta(\xi)=\Psi(26[\xi])(\int\Psi(26[\cdot]))^{-1}$,
$\hat\rho(\xi)=\Psi(\tfrac{13}{10}[\xi])-\Psi(\tfrac{44}{30}[\xi])$ 
and $\hat\omega(\xi)=\chi(\xi)\hat\rho(\xi)\Psi(\tfrac{13}{3}[\xi-\zeta])
(\int\chi\hat\rho\Psi(\tfrac{13}{3}[\cdot-\zeta]))^{-1}$ for some
$\chi$ in $C^\infty_0(\Rn)$.
Indeed, $\rho$ is real-valued if and only if $\hat\rho(-\xi)=
\overline{\hat\rho(\xi)}$. If $0<\Psi(t)<1$ whenever $t$ is in 
$\,]\frac{11}{10},\frac{13}{10}[\,$, as we may assume, 
this condition is satisfied, and for
$\xi=\frac{11}{13}\zeta$ the factor $\hat\rho(\xi)$ is $>0$ and
$\Psi(\frac{13}{3}[\xi-\zeta])=1$, so
$\int\chi\hat\rho\Psi(\frac{13}{3}[\cdot-\zeta])\ne0$ for some $\chi\in
C^\infty_0(\Rn)$.

For such $\rho$, $\omega$ and $\theta$ we denote
 \begin{equation}
  \begin{aligned}
 \rho_k(x)&=\rho(2^{k\Mi}x) \qquad(k\in\Z), \qquad
 \omega_k(x)=\omega(2^{k\Mi}x) \qquad(k\in\N)\\
 \theta_k(x)&=\theta(x)\exp(i\operatorname{sgn}k\,2^{|k|}x_{j_0}) 
 \qquad(k\in\Z),
 \end{aligned}
 \label{2.1'}
 \end{equation}
whereby $\theta_0$ should be read as $\theta$.

\begin{lem} \label{ncss-lem} 
$1^\circ$ For any admissible  $s$, $p$ and $q$, the functions 
$\rho_k$, $\omega_k$ and $\theta_k$ satisfy
 \begin{equation}
 \begin{alignedat}{4}
 &\norm{\rho_k}{F^{\Mi,s}_{p,q}}&=&\norm{\rho_k}{B^{\Mi,s}_{p,q}}&=
 &\norm{\rho}{L_{p}}2^{k(s-\fracci{|\Mi|}{p})}, &\quad(k&\in\N) \\
 &\norm{\rho_k}{F^{\Mi,s}_{p,q}}&=&\norm{\rho_k}{B^{\Mi,s}_{p,q}}&=
 &\norm{\rho}{L_{p}}2^{-k\fracci{|\Mi|}{p}}, &\quad(-k&\in\N_0) \\
 &\norm{\rho^2_k}{F^{\Mi,s}_{p,q}}&=&\norm{\rho^2_k}{B^{\Mi,s}_{p,q}}&=
 &\norm{\rho^2}{L_{p}}2^{-k\fracci{|\Mi|}{p}}, &\quad(-k&\in\N) \\
 &\norm{\omega_k}{F^{\Mi,s}_{p,q}}&=&\norm{\omega_k}{B^{\Mi,s}_{p,q}}&=
 &\norm{\omega}{L_{p}}2^{k(s-\fracci{|\Mi|}{p})}, && \\
 &\norm{\theta_k}{F^{\Mi,s}_{p,q}}&=&\norm{\theta_k}{B^{\Mi,s}_{p,q}}&=
 &\norm{\theta}{L_{p}}2^{|k|s}, &&\\
 &\norm{\omega_k^2}{F^{\Mi,s}_{p,q}}&=&\norm{\omega_k^2}{B^{\Mi,s}_{p,q}}&=
 &\norm{\omega^2}{L_{p}}2^{k(s-\fracci{|\Mi|}{p})+s}, &&\\
 &\norm{\theta\theta_k}{F^{\Mi,s}_{p,q}}&=
 &\norm{\theta\theta_k}{B^{\Mi,s}_{p,q}}&=
 &\norm{\theta^2}{L_{p}}2^{|k|s}, &&\\
 &\norm{\theta\omega_k}{F^{\Mi,s}_{p,q}}\:&=
 &\norm{\theta\omega_k}{B^{\Mi,s}_{p,q}}\:&=
 &\norm{\omega\theta(2^{-k\Mi}\cdot)}{L_{p}}2^{k(s-\fracci{|\Mi|}{p})};  &&
 \end{alignedat}
 \label{2.2}
 \end{equation}
and for any $p\in\,]0,\infty]$ one has moreover
 \begin{equation}
 \lim_{k\to\infty} 2^{k|\Mi|}\cal F^{-1}\Phi_0\cal F(\rho_k^2)(x)=
 \norm{\rho^2}{L_1}\cal F^{-1}\Phi_0(x)\quad\text{ in}\quad L_{p}(\Rn).
 \label{2.3}
 \end{equation}

$2^\circ$ The functions $\theta^{(t)}_{N,\pm}$, $\rho^{(t)}_{N,l}$, 
$\omega^{(t)}_{N}$ and $\Omega^{(t)}_N$, given by 
$ \theta^{(t)}_{N,\pm}=\sum_{k=1}^{N}2^{-kt}\theta_{\pm k}$ and 
 \begin{equation}
 \rho^{(t)}_{N,l}=\sum_{k=l+1}^{l+N}2^{-kt}\rho_k,
 \qquad
 \omega^{(t)}_{N}=\sum_{k=N+1}^{2N}2^{-kt} \omega_k,
 \qquad
 \Omega^{(t)}_{N}=\sum_{k=2}^{N+1}2^{-2^kt} \omega_{2^k},
 \label{2.4}
 \end{equation}
have for the indicated values of $t$ norms with the
characterisations (for $N\ge1$ respectively $N\ge4$ in the last line)
 \begin{equation}
 \begin{alignedat}{2}
 &\norm{\rho^{(s-\fracci{|\Mi|}{p})}_{N,l}}{B^{\Mi,s}_{p,q}}&=
 &\norm{\rho}{L_{p}}N^{\fracci{1}{q}},
 \\
 \norm{\omega^{(s-\fracci{|\Mi|}{p})}_{N}}{B^{\Mi,s}_{p,q}}=&
 \norm{\Omega^{(s-\fracci{|\Mi|}{p})}_{N}}{B^{\Mi,s}_{p,q}}\;&=&
 \norm{\omega}{L_{p}}N^{\fracci{1}{q}},
 \\
 \norm{\theta^{(s)}_{N,\pm}}{F^{\Mi,s}_{p,q}}
 =&\norm{\theta^{(s)}_{N,\pm}}{B^{\Mi,s}_{p,q}}&=
 &\norm{\theta}{L_{p}}N^{\fracci{1}{q}},
 \\
 \norm{\theta\theta^{(s)}_{N,\pm}}{F^{\Mi,s}_{p,q}}
 =&\norm{\theta\theta^{(s)}_{N,\pm}}{B^{\Mi,s}_{p,q}}&=
 &\norm{\theta^2}{L_{p}}N^{\fracci{1}{q}},
 \\
 \norm{\omega_{3N}\omega^{(0)}_{N}}{F^{\Mi,s}_{p,q}}=
 &\norm{\omega_{3N}\omega^{(0)}_{N}}{B^{\Mi,s}_{p,q}}&=
 &  \norm{
  \begin{smash}[b]
 \omega\sum_{k=N}^{2N-1}\omega(2^{-k\Mi}\cdot)
 \end{smash} }{L_{p}}  
 2^{3N(s-\fracci{|\Mi|}{p})}.
 \end{alignedat}
 \label{2.5}
 \end{equation}

Furthermore, for $2\le k\le N+1$ respectively for each $p\in\,]0,\infty]$
 \begin{gather}
 \cal F^{-1}\Phi_{2^k+1} \cal F(\Omega^{(t_0)}_N\Omega^{(t_1)}_N)
 =2^{-2^k(t_0+t_1)}\omega^2_{2^k}
 \label{2.5''} \\
 \lim_{N\to\infty}\omega N^{-1}\sum_{k=N}^{2N-1}
 \omega(2^{-k\Mi}\cdot) =\omega\quad\text{ in $L_p$},
 \label{2.6'}
 \end{gather}
and under the conditions 
$t_0+t_1=-|\M|$ respectively $s_0+s_1=0$, with $N\in \N$,
 \begin{equation}
 \begin{aligned}
 \lim_{l\to\infty}\cal F^{-1}\Phi_0\cal F(\rho^{(t_0)}_{N,l}
 \cdot\rho^{(t_1)}_{N,l})&=
 N\norm{\rho}{L_2}^2\cal F^{-1}\Phi_0\quad  \text{in $L_{p}$},   \\
 \cal F^{-1}\Phi _0\cal F(\theta _{N,+}^{(s_0)}\cdot\theta _{N,-}^{(s_1)})
 &=N\theta^2.
 \end{aligned}
 \label{2.6}
 \end{equation}
\end{lem}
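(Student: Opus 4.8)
The plan is to reduce everything to one principle: \emph{if $\supp\cal Fg$ is contained in the shell where a single $\Phi_j\equiv1$, then by the partition-of-unity property $\cal F^{-1}\Phi_i\cal Fg=\kd{i=j}\,g$, so both quasinorms in \eqref{1.17}--\eqref{1.18} collapse to the one-term expression $2^{sj}\norm{g}{L_p}$} (the inner $\ell_q$-operation acting on a single nonzero coordinate, whence the $B$- and $F$-norms coincide). Every entry of \eqref{2.2} and the first four lines of \eqref{2.5} then follow once each building block is pinned to one dyadic shell and its $L_p$-norm is read off. For the localisation I would use three facts: the dilation identity $[2^{k\Mi}\xi]=2^k[\xi]$; the convolution rule $\supp\cal F(fg)\subset\supp\cal Ff+\supp\cal Fg$; and the anisotropic inequalities $|[\xi]-[\eta]|\le[\xi+\eta]\le[\xi]+[\eta]$ (stated before \eqref{3.6}). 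The $L_p$-norms come from the substitution $y=2^{k\Mi}x$, which contributes $2^{-k|\M|/p}$, together with the invariance of $\norm{\cdot}{L_p}$ under the modulation in $\theta_k$ (since $|\theta_k|=|\theta|$).

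Concretely, $\supp\cal F\rho_k=\supp\cal F\omega_k$ lies in $\{\tfrac34 2^k\le[\xi]\le2^k\}$, which for $k\ge1$ sits inside $\{\Phi_k\equiv1\}=\{\tfrac{13}{20}2^k\le[\xi]\le\tfrac{11}{10}2^k\}$ and for $k\le0$ inside $\{\Phi_0\equiv1\}$, giving the first, second, and fourth lines; for $\theta_k$ the modulation by $\operatorname{sgn}(k)2^{|k|}\zeta$ (here $m_{j_0}=1$, so $[2^{|k|}\zeta]=2^{|k|}$) carries the bump $\cal F\theta\subset\{[\xi]\le\tfrac1{20}\}$ into $\{\Phi_{|k|}\equiv1\}$, giving the fifth line. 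The products $\rho_k^2$ (for $k\le-1$, spectrum in $[\xi]\le2^{k+1}\le1$), $\theta\theta_k$, and $\theta\omega_k$ inherit the shells of their factors, yielding the third, seventh, and eighth lines. The delicate case is $\omega_k^2$: writing $\xi=\xi_1+\xi_2$ with $\xi_i\in\supp\cal F\omega_k$, the \emph{ball} $B=\{[\xi-\zeta]\le\tfrac3{10}\}$ forces both frequencies towards $2^k\zeta$, so $[\xi_1+\xi_2-2^{k+1}\zeta]\le\tfrac35 2^k$ gives the lower bound $[\xi]\ge\tfrac75 2^k$, while the annulus bound $[\xi_i]\le2^k$ gives $[\xi]\le2^{k+1}$; thus $\supp\cal F(\omega_k^2)\subset[\tfrac{14}{10}2^k,2^{k+1}]\subset\{\Phi_{k+1}\equiv1\}$, producing the sixth line with its extra factor $2^s$. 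The concentration \eqref{2.3} is immediate: $2^{k|\M|}\cal F(\rho_k^2)(\xi)=\cal F(\rho^2)(2^{-k\Mi}\xi)\to\cal F(\rho^2)(0)=\norm{\rho^2}{L_1}$ uniformly on the compact set $\supp\Phi_0$, and applying $\cal F^{-1}$ gives $L_p$-convergence.

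For $2^\circ$ the sums are engineered so that \emph{distinct summands occupy distinct shells}: the terms of $\rho^{(t)}_{N,l}$, $\omega^{(t)}_N$, $\theta^{(t)}_{N,\pm}$ land in $\Phi_{l+1},\dots,\Phi_{l+N}$, in $\Phi_{N+1},\dots,\Phi_{2N}$, and in $\Phi_1,\dots,\Phi_N$, while those of $\Omega^{(t)}_N$ occupy the widely spaced $\Phi_{2^2},\dots,\Phi_{2^{N+1}}$. With the indicated exponents ($t=s-|\M|/p$, resp.\ $t=s$) each shell contributes exactly $\norm{\rho}{L_p}$ (resp.\ $\norm{\theta}{L_p}$, $\norm{\theta^2}{L_p}$), so the $\ell_q$-norm equals $\norm{\cdot}{L_p}N^{\fracci1q}$, giving the first four lines of \eqref{2.5}. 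The mixed norm $\norm{\omega_{3N}\omega^{(0)}_N}{B^{\Mi,s}_{p,q}}$ is where single-shell localisation must be re-earned: for $N+1\le k\le2N$ the factor $\omega_{3N}\omega_k$ has $\tfrac34 2^{3N}-2^{2N}\le[\xi]\le2^{3N}+2^{2N}$, and the threshold $N\ge4$ (so $2^{-N}\le\tfrac1{16}$, whence $\tfrac34-2^{-N}>0.65$ and $1+2^{-N}<1.1$) places all of them in the single shell $\Phi_{3N}$; the substitution $y=2^{3N\Mi}x$ and the reindexing $j=3N-k$ then produce the factor $\omega\sum_{k=N}^{2N-1}\omega(2^{-k\Mi}\cdot)$ and the scaling $2^{3N(s-|\M|/p)}$.

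The remaining identities \eqref{2.5''}, \eqref{2.6}, \eqref{2.6'} are ``diagonal survives, off-diagonal dies'' statements for a single projection, which I would settle by scale separation. In \eqref{2.5''} a cross term $\omega_{2^l}\omega_{2^m}$ with $l<m$ has $0.65\cdot2^{2^m}\le[\xi]\le1.1\cdot2^{2^m}$, hence lies in $\Phi_{2^m}$, whose index is never of the form $2^k+1$ (as $2^m-1$ is not a power of $2$ for $m\ge2$), so only the diagonal $l=m=k$ reaches $\Phi_{2^k+1}$, where $\omega_{2^k}^2$ sits entirely by the sixth line of \eqref{2.2}. In \eqref{2.6} the same mechanism applies: for $k\ne k'$ one has $[\xi]\ge\tfrac12 2^{\min(k,k')}\ge2^l$ for $\rho_k\rho_{k'}$, exceeding $\supp\Phi_0$ once $l\ge1$, while $\theta_k\theta_{-k'}=\theta^2\exp(i(2^k-2^{k'})x_{j_0})$ is centred at $[\xi]=|2^k-2^{k'}|\ge2>\tfrac{13}{10}$; so only $k=k'$ survives, and the diagonal sums are evaluated by \eqref{2.3} (letting $l\to\infty$, using $\norm{\rho^2}{L_1}=\norm{\rho}{L_2}^2$ since $\rho$ is real) and by $\theta_k\theta_{-k}=\theta^2$, giving $N\norm{\rho}{L_2}^2\cal F^{-1}\Phi_0$ and $N\theta^2$ under $t_0+t_1=-|\M|$, $s_0+s_1=0$. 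Finally \eqref{2.6'} is a Cesàro statement: $\omega(2^{-k\Mi}x)\to\omega(0)=1$ with $|\omega(2^{-k\Mi}x)-1|\le c\,2^{-k}|x|$, so $\omega\cdot N^{-1}\sum_{k=N}^{2N-1}\omega(2^{-k\Mi}\cdot)\to\omega$ in every $L_p$ by dominated convergence, the dominant $(\norm{\omega}{L_\infty}+1)|\omega|$ lying in $L_p$. The main obstacle throughout is precisely the spectral bookkeeping for the quadratic terms $\omega_k^2$ and $\omega_{3N}\omega_k$, where the annulus and the ball $B$ must be used \emph{together} to trap the spectrum in one shell and to fix the $N\ge4$ threshold.
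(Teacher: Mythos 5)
Your proposal is correct and follows essentially the same route as the paper's own proof: single-shell spectral localisation (the paper's display \eqref{2.7}, obtained from the dilation identity, the convolution support rule, the anisotropic triangle inequality and the ball $B$ for $\omega_k^2$), so that both quasi-norms collapse to $2^{sj}\norm{\cdot}{L_p}$, followed by the same disjoint-shell bookkeeping for the sums in $2^\circ$ and the same ``diagonal survives'' scale-separation arguments for \eqref{2.5''} and \eqref{2.6}.

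The one soft spot is your treatment of \eqref{2.3}: uniform convergence of $\cal F(\rho^2)(2^{-k\Mi}\xi)$ on the compact set $\supp\Phi_0$ does \emph{not} by itself yield $L_p$-convergence of the inverse transforms when $p<\infty$ (and in particular when $p\le1$), since there is no bound of $\norm{\cal F^{-1}g}{L_p}$ by $\norm{g}{L_\infty}$ for compactly supported $g$. You must also use that all $\xi$-derivatives of $\cal F(\rho^2)(2^{-k\Mi}\xi)$ converge uniformly (each carries a factor $2^{-k}$), so that the multipliers converge in $C^\infty$ with fixed compact support and hence the inverse transforms converge in $\cal S(\Rn)$, or equivalently invoke a uniform polynomial-decay majorant; this is exactly what the paper's estimate \eqref{2.8} supplies.
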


\begin{proof} $1^\circ$ The main point is to show the relations
 \begin{equation}
 \begin{alignedat}{3}
 \supp \cal F\rho_k&\subset\{\,\xi\mid\tfrac{15}{20}2^k\le[\xi]\le2^k\,\}
 &&\subset\{\,\xi\mid\Phi_{k_+}(\xi)=1\,\},\quad&& \\
 \supp \cal F\rho_k^2&\subset\{\,\xi\mid [\xi]\le2^{k+1}\,\}
 &&\subset\{\,\xi\mid\Phi_0(\xi)=1\,\},\quad&&\text{ for }\,-k\in\N, \\
 \supp \cal F\omega_k&\subset\{\,\xi\mid\tfrac{15}{20}2^k\le[\xi]\le2^k\,\}
 &&\subset\{\,\xi\mid\Phi_k(\xi)=1\,\},&& \\
 \supp \cal F\theta_k&\subset\{\,\xi\mid\tfrac{19}{20}2^{|k|}\le[\xi]\le
 \tfrac{21}{20}2^{|k|}\,\}&&\subset\{\,\xi\mid\Phi_{|k|}(\xi)=1\,\},&& \\
 \supp\cal F\omega_k^2&\subset\{\,\xi\mid\tfrac{28}{20}2^{k}\le[\xi]
 \le2^{k+1}\,\}&&\subset\{\,\xi\mid\Phi_{k+1}(\xi)=1\,\},&& \\
 \supp\cal F(\theta\theta_k)&\subset\{\,\xi\mid\tfrac{18}{20}2^{|k|}\le[\xi]
 \le\tfrac{22}{20}2^{|k|}\,\}&&\subset\{\,\xi\mid\Phi_{|k|}(\xi)=1\,\},
  \quad&&\text{ for }\,k\ne0, \\
 \supp\cal F(\theta\omega_k)&\subset\{\,\xi\mid\tfrac{14}{20}2^{k}\le[\xi]
 \le\tfrac{21}{20}2^{k}\,\}&&\subset\{\,\xi\mid\Phi_k(\xi)=1\,\}.&&
 \end{alignedat}
 \label{2.7}
 \end{equation}
Indeed, \eqref{2.7} gives $\cal F^{-1}(\Phi_l\hat\omega_k)=
\kd{k=l}\omega_k$ etc., so that
$\norm{\omega_k}{F^{\Mi,s}_{p,q}}=\norm{2^{sk}\omega_k}{L_p}$ etc. Taking the
dilations or the exponential factors into account \eqref{2.2} follows.

The support conditions on $\rho_k$ and $\omega_k$
follow from \eqref{2.1} since $[t^\Mi\xi]=t[\xi]$ and
$\hat\rho_k(\xi)=2^{-k|\Mi|}\hat\rho(2^{-k\Mi}\xi)$; and by \eqref{1.15}
$\{\,\xi\mid\Phi_k(\xi)=1\,\}\supset\{\,\xi\mid\tfrac{13}{20}2^k\le
[\xi]\le\tfrac{22}{20}2^k\,\}$ for $k\ge1$. 
For $\omega^2_k$ and $\theta\omega_k$ one can use  that 
$\supp \cal F(\varphi\psi)\subset\supp\hat\varphi+\supp\hat\psi$ and that 
$|[\xi]-[\eta]|\le[\xi+\eta]\le[\xi]+[\eta]$. Indeed, for
$\xi',\xi''\in\supp\hat\omega_k$
one has $2^{-k\Mi}\xi^{(j)}=\zeta+\eta^{(j)}$ with $\eta',\eta''\in B$,
hence $[\xi'+\xi'']\le[\xi']+[\xi'']\le2^{k+1}$ and $[\xi'+\xi'']\ge
2^{k}([2\zeta]-[\eta'+\eta''])\ge2^k(2-2\tfrac{3}{10})=\tfrac{14}{10}2^k$.
(The definition of $\zeta$ gives $[2\zeta]=2$ here.) 
$\theta_k$ and $\theta\theta_k$ are treated along the
latter lines, since their spectra are obtained from the~case $k=0$
by translation in both directions along the $\xi_{j_0}$-axis.

To obtain \eqref{2.3} note that there is strong convergence
$2^{k|\Mi|}\rho_k^2(\int\!\rho^2)^{-1}*\cdot\to1$ on $C(\Rn)$
($\int\rho^2>0$ follows since $\rho$ is real).
This gives \eqref{2.3} for $p=\infty$. For $p<\infty$ we use the pointwise
convergence thus shown together with $|\check\Phi_0\int\!\rho^2-2^{k|\Mi|}
\check\Phi_0*\rho_k^2|^p\le2^p|\check\Phi_0\int\!\rho^2|^p+2^p
|2^{k|\Mi|}\check\Phi_0*\rho_k^2|^p$ and the following majorisation,
 \begin{equation}
 \begin{aligned}
 \bigl| 2^{k|\Mi|}\cal F^{-1}\Phi_0\cal F(\rho_k^2)(x)\bigr|
 &\le\langle x\rangle^{-\frac{n+1}{p}}\sup_x\bigl|\cal F^{-1}
 ((1-\lap_\xi)^N\Phi_0 2^{k|\Mi|}\cal F(\rho_k^2))(x)\bigr|\\
 &\le\,\langle x\rangle^{-\frac{n+1}{p}}
 \sum_{|\alpha|,|\beta|\le 2N}c_{\alpha,\beta}
 \norm{x^\beta\rho^2}{L_1}\int|D^\alpha\Phi_0|\,d\xi,   
 \end{aligned}
 \label{2.8}
 \end{equation}
where $\langle x\rangle=(1+|x|^2)^{\frac12}$ and $N\ge\frac{n+1}{2p}$.

$2^\circ$ \eqref{2.5} is obtained like \eqref{2.2} from 
\eqref{2.7} and the norm definitions.
In particular the $k^{th}$ summand in $\theta\theta^{(s)}_{N,\pm}$ has
spectrum in $\{\,\xi\mid\Phi_k(\xi)=1\,\}$, implying, e.g.,
 \begin{equation}
 \norm{\theta\theta^{(s)}_{N,\pm}}{B^{\Mi,s}_{p,q}}=
 \bigl(\sum_{k=1}^N(2^{ks}\norm{2^{-ks}\theta^2e^{i2^kx_{j_0}}}{L_p})^q
 \bigr)^{\fracci{1}{q}}=\norm{\theta^2}{L_p}N^{\fracci{1}{q}}.
 \label{2.9}
 \end{equation}
Concerning $\omega_{3N}\omega^{(0)}_N$, each term
$\omega_{3N}\omega_k$ with $N+1\le k\le 2N$ can be treated like
$\omega^2_k$ in \eqref{2.7}, thus $\supp\cal
F(\omega_{3N}\omega_k)\subset \{\,\xi\mid
2^{3N}(\tfrac{15}{20}-2^{k-3N})\le [\xi]\le 2^{3N}(1+2^{k-3N})\,\}$.
This set is contained in $\{\,\xi\mid \Phi_{3N}(\xi)=1\,\}$ for $N\ge4$,
so \eqref{2.5} follows.

The product $\Omega^{(t)}_N\Omega^{(\tau)}_N$ consists in part of
terms $2^{-2^k(t+\tau)}\omega^2(2^{(2^k)\Mi}x)$ with spectrum in 
$\{\,\xi\mid \Phi_{2^k+1}(\xi)=1\,\}$, cf.~\eqref{2.7}, and in
part of terms stemming from $\omega_{2^j}\omega_{2^k}$ with $j<k$.
Since $2^k\ge 2^{j+1}\ge 2^j +4$ (because $j\ge2$) it is found that 
$\supp\cal F(\omega_{2^j}\omega_{2^k})\subset
\{\,\xi\mid \Phi_{2^k}(\xi)=1\,\}$, and here $2^k\ne 2^l+1$ holds for
all $k$ and $l\ge2$. 
Hence $\cal F^{-1}\Phi_{2^l+1}\cal F(\omega_{2^j}\omega_{2^k})$ equals $0$ if
$j\ne k$ and if $j=k\ne l$. 

The limit in \eqref{2.6'} is found by majorisation using $\omega(0)=1$.
When $t_0+t_1=-|\M|$ we have for $l\to \infty$ by use of \eqref{2.3}
that in the topology of $L_p$,
 \begin{equation}
 \cal F^{-1}\Phi_0\cal F(\rho^{(t_0)}_{N,l}\rho^{(t_1)}_{N,l})
 =\,\sum_{j=l+1}^{l+N}2^{j|\Mi|}\cal F^{-1}\Phi_0\cal F\rho_j^2 
 \to\, N\norm{\rho}{L_2}^2\cal F^{-1}\Phi_0.
 \label{2.11}
 \end{equation}
Indeed, for $j\ne k$ the spectra of $\rho_j\rho_k$ and $\supp
\Phi_0$ are disjoint: E.g., for $j\ge k+1$ any element of $\supp
\cal F(\rho_j\rho_k)$ is of the form $\xi_j+\xi_k$, with $\xi_m\in
\supp\cal F\rho_m$, for which $[\xi_j+\xi_k]\ge |\tfrac34 2^j-2^k|\ge
\tfrac12 2^k\ge\tfrac{13}{10}$ for $k\ge2$.
For $s_0+s_1=0$ it is seen that
 \begin{equation}
 \cal F^{-1}\Phi _0\cal F(\theta _{N,+}^{(s_0)}\theta _{N,-}^{(s_1)})=
 \cal F^{-1}\Phi _0\cal F\big(\sum_{k,l=1}^{N}2^{-ks_0-ls_1}
 e^{(i(2^{k}-2^l)x_{j_0})} \theta^2\big)
 =N\theta^2,
 \label{2.12}
 \end{equation}
since the terms with $l\ne k$ in the sum have their spectrum disjoint from
$\supp \Phi _0$, while those with $l=k$ have their spectrum in
$\{\,\xi \mid \Phi _0(\xi )=1\,\}$. 
\end{proof}

The present versions of the functions $\rho_k$, $\theta_k$ and
$\theta^{(t)}_{N,\pm}$, that via \cite{S} and \cite{F3} go back at
least to \cite[2.3.9]{T2}, are introduced in order to obtain
greater clarity via characterisations of the norms rather than
estimates. The other functions are
introduced to show some of the new parts of Theorem~\ref{ncss-thm}
below, and so is the technique of considering the limits in
\eqref{2.3}, \eqref{2.6'} and \eqref{2.6}.

In the next result (1), $(1')$, (2) and $(2')$ set limits for the 
admissible spaces $A_0\oplus A_1$, while (3)--(7) etc.~restrict the 
best obtainable $A_2$, cf.~(II) and (III).

\begin{thm} \label{ncss-thm} 
If there exists a constant $c<\infty$ such that the inequality
 \begin{equation}
 \norm{f\cdot g}{A_2}\,\le \,c\norm f{A_0}\norm g{A_1}
 \quad\text{ holds for all $f$ and $g\in\cal S(\Rn)$},
 \label{2.13}
 \end{equation}
where $A_j=B^{\Mi,s_j}_{p_j,q_j}(\Rn)$ or $A_j=F^{\Mi,s_j}_{p_j,q_j}(\Rn)$ 
for $j=0$, $1$ and $2$, then it follows  that 
 \begin{equation}
 \begin{alignedat}{2}
 &\text{\rom{(1)}}&\quad s_0+s_1&\ge|\M|(\fracc1{p_0}+\fracc1{p_1}-1),  \\
 &\text{\rom{(2)}}&\quad s_0+s_1&\ge0,  \\
 &\text{\rom{(3)}}&\quad s_2&\le \min(s_0,s_1),\\
 &\text{\rom{(4)}}&\quad \fracc1{p_2}&\le \fracc1{p_0}+\fracc1{p_1},  \\
 &\text{\rom{(5)}}&\quad s_2-\fracc{|\M|}{p_2}&\le
                       \min(s_0-\fracc{|\M|}{p_0},s_1-\fracc{|\M|}{p_1}) \\
 &\text{\rom{(6)}}&\quad s_2-\fracc{|\M|}{p_2}&\le
 s_0+s_1-|\M|(\fracc1{p_0}+\fracc1{p_1}),\\
 &\text{\rom{(7)}}&\quad s_2-\fracc{|\M|}{p_2}&=s_1-\fracc{|\M|}{p_1}
  \quad\text{and}\quad s_0=\fracc{|\M|}{p_0}\\
 &&&\text{ implies}\ \left\{
  \begin{aligned}
   q_0&\le1\text{ in $B\bigdot\bigdot$ cases},\\ 
   p_0&\le1\text{ in $F\bigdot\bigdot$ cases}.
  \end{aligned} \right.
 \end{alignedat}
 \label{2.14}
 \end{equation}
Furthermore it also follows, for $j=0$ respectively $j=1$, that 
 \begin{equation}
 \begin{alignedat}{3}
 (1')&\quad & s_0+s_1&=\fracc{|\M|}{p_0}+\fracc{|\M|}{p_1}-|\M| 
    &&\ \text{ implies}
  \begin{cases}
  \fracc{1}{q_0}+\fracc{1}{q_1}\ge1 \ \text{in $BB\bigdot$ cases},\\
  \fracc{1}{q_0}+\fracc{1}{p_1}\ge1 \ \text{in $BF\bigdot$ cases};
  \end{cases}
 \\
 (2')&\quad & s_0+s_1&=0 &&\ \text{ implies} 
 \ \fracc1{q_0}+\fracc1{q_1}\ge1, 
 \\
 (3')&\quad & s_2&=s_j &&\ \text{ implies}\ q_2\ge q_j, 
 \\
 (5')&\quad & s_2-\fracc{|\M|}{p_2}&=s_j-\fracc{|\M|}{p_j} &&\ \text{ implies}
 \ q_2\ge q_j \ \text{in $B{\bigdot}B$ resp. $\bigdot BB$ cases}, 
 \\
 (6')&\quad & s_2-\fracc{|\M|}{p_2}&=s_0-\fracc{|\M|}{p_0}+s_1-
     \fracc{|\M|}{p_1}&&\ \text{ implies}\ q_2\ge 
     (\fracc1{q_0}+\fracc1{q_1})^{-1} \ \text{in $BBB$ cases}.
 \end{alignedat}
 \label{2.15}
 \end{equation}
By Proposition~\ref{slw-prop}, the same conclusions can be drawn 
for $\pi$ when it satisfies \eqref{2.13}.
\end{thm}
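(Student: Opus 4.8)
The plan is to substitute the test functions of Lemma~\ref{ncss-lem} into \eqref{2.13} and to let a scale parameter tend to infinity. Since \eqref{2.2}--\eqref{2.5} give the factor norms $\norm f{A_0}$ and $\norm g{A_1}$ exactly, the only real input is a \emph{lower} bound on $\norm{f\cdot g}{A_2}$; for this I isolate a single dyadic block, using $\norm h{A_2}\ge 2^{s_2 j_0}\norm{\cal F^{-1}\Phi_{j_0}\cal Fh}{L_{p_2}}$ for every $j_0$, which holds whether $A_2$ is Besov or Triebel--Lizorkin (the $\ell_{q_2}$-expression dominates each of its terms). Hence the type of $A_2$ is immaterial except in $(5')$. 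As every test function lies in $\cal S(\Rn)\subset\cal O_M(\Rn)$, Proposition~\ref{slw-prop} gives $\pi(f,g)=f\cdot g$ on them, so the conditions obtained for $\mu$ pass verbatim to $\pi$; this is the final assertion.

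For the generic conditions (1)--(6) I would use the single scaled bumps. Testing $f=\theta_k$ against a fixed $g=\theta$ makes $fg=\theta\theta_k$ a single block, and \eqref{2.2} turns \eqref{2.13} into $2^{|k|s_2}\lesssim 2^{|k|s_0}$, whence (3) (and $s_2\le s_1$ by symmetry); replacing $\theta_k$ by $\omega_k$ and letting $\theta(2^{-k\Mi}\cdot)\to1$ in the $\theta\omega_k$-norm of \eqref{2.2} gives (5). The low-frequency square $\rho_k^2$ ($-k\in\N$) yields (4), while the high-frequency square $\omega_k^2$, a single block by \eqref{2.2}, yields (6) on comparing $2^{k(s_2-\fracci{|\M|}{p_2})+s_2}$ with $2^{k(s_0+s_1-\fracci{|\M|}{p_0}-\fracci{|\M|}{p_1})}$. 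The opposite modulations $f=\theta_k$, $g=\theta_{-k}$ collapse to $fg=\theta^2$, so \eqref{2.13} reads $0<\norm{\theta^2}{A_2}\lesssim 2^{|k|(s_0+s_1)}$ and forces (2). Finally (1) comes from the low-frequency residue of $\rho_k^2$: by \eqref{2.3} its zeroth block is $\approx 2^{-k|\M|}\norm{\rho^2}{L_1}\cal F^{-1}\Phi_0$, so $\norm{\rho_k^2}{A_2}\gtrsim 2^{-k|\M|}$, weighed against $2^{k(s_0+s_1-\fracci{|\M|}{p_0}-\fracci{|\M|}{p_1})}$.

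The borderline conditions are obtained from the finite packets of \eqref{2.4}, arranged so that the scale factors cancel and an $N$-fold multiplicity survives. For $(3')$ and $(5')$, testing $\theta^{(s_0)}_{N,+}$ resp.\ $\omega^{(s_0-\fracci{|\M|}{p_0})}_N$ against $\theta$ produces a sum of $N$ consecutive blocks whose weights are all $O(1)$ under the stated equality, so $\norm{fg}{A_2}\approx N^{\fracci1{q_2}}$ can stay below $cN^{\fracci1{q_0}}$ only if $q_2\ge q_0$. For $(2')$ and $(1')$ the resonant products of \eqref{2.6} deposit $N\theta^2$ resp.\ $N\norm\rho{L_2}^2\cal F^{-1}\Phi_0$ (as $l\to\infty$) into block $0$, so $\norm{fg}{A_2}\gtrsim N$ against $N^{\fracci1{q_0}+\fracci1{q_1}}$ forces $\fracc1{q_0}+\fracc1{q_1}\ge1$. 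Condition $(6')$ is the lacunary analogue: by \eqref{2.5''} the product $\Omega^{(t_0)}_N\Omega^{(t_1)}_N$ carries an undamped block $\omega_{2^k}^2$ at each level $2^k+1$, giving $\norm{fg}{A_2}\gtrsim N^{\fracci1{q_2}}$ against $N^{\fracci1{q_0}+\fracci1{q_1}}$. Condition (7) uses $\omega_{3N}\cdot\omega^{(0)}_N$, where $s_0=\fracc{|\M|}{p_0}$ makes the $A_0$-factor norm $N^{\fracci1{q_0}}$ and where \eqref{2.5}, \eqref{2.6'} pin $\norm{fg}{A_2}$ to $\approx N\,2^{3N(s_2-\fracci{|\M|}{p_2})}$; under $s_2-\fracc{|\M|}{p_2}=s_1-\fracc{|\M|}{p_1}$ the scale factors cancel and \eqref{2.13} reduces to $N\lesssim N^{\fracci1{q_0}}$, forcing $q_0\le1$.

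The main obstacle is the Besov-versus-Triebel--Lizorkin dichotomy appearing in $(1')$, $(5')$ and (7), which dictates the case distinctions and restrictions there. It arises from the \emph{dilation} packets $\{2^{sk}\omega_k\}$ and $\{2^{sk}\rho_k\}$: for these the bumps live at the disjoint physical scales $2^{-k}$, so the $L_p$-norm of their $\ell_q$-function grows like $N^{\fracci1p}$ rather than $N^{\fracci1q}$, and the $F$-norm of the packet is governed by $p$ instead of $q$. This is what turns $\fracc1{q_0}+\fracc1{q_1}\ge1$ into $\fracc1{q_0}+\fracc1{p_1}\ge1$ in the $BF\bigdot$ case of $(1')$, turns $q_0\le1$ into $p_0\le1$ in the $F\bigdot\bigdot$ case of (7), and confines $(5')$ to the $B\bigdot B$ and $\bigdot BB$ cases. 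By contrast the \emph{modulation} packets of $\theta$'s all share the modulus $|\theta^2|$, so their $B$- and $F$-norms coincide and $(2')$, $(3')$ carry no dichotomy. Making the disjointness quantitative\,---\,controlling the Schwartz tails of the rescaled bumps so that the $L_p$-norm really is $\approx N^{\fracci1p}$\,---\,is the one point demanding care beyond bookkeeping.
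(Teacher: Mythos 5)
Your proposal is correct, and for the bulk of the theorem it coincides with the paper's own proof: conditions (1)--(6), $(2')$, $(3')$, $(5')$, $(6')$ and the Besov cases of $(1')$ and (7) are obtained there with exactly the pairings you chose\,---\,$\rho_k^2$ with \eqref{2.3} for (1) and (4), $\theta_k\theta_{-k}$ for (2), $\theta\theta_k$ and $\theta\omega_k$ for (3) and (5), $\omega_k^2$ for (6), the packets of \eqref{2.4} with \eqref{2.5}--\eqref{2.6} for the primed conditions, and $\omega_{3N}\omega^{(0)}_N$ for (7)\,---\,together with the same single-block lower bound on $\norm{\cdot}{A_2}$ and Proposition~\ref{slw-prop} for the transfer to $\pi$. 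The genuine divergence is in the two $F$-space implications, the $BF\bigdot$ case of $(1')$ and the $F\bigdot\bigdot$ case of (7). The paper never estimates $F$-norms of dilation packets; it reduces these cases to the already-proved Besov cases by embeddings: for $(1')$ it uses $B^{\Mi,t}_{r,r}\hookrightarrow F^{\Mi,s_1}_{p_1,q_1}$ with $t-\fracc{|\M|}{r}=s_1-\fracc{|\M|}{p_1}$, concludes $\fracc1{q_0}+\fracc1{r}\ge1$ for every $r<p_1$ and lets $r\uparrow p_1$; for (7) it assumes $p_0>1$, inserts the Jawerth--Franke embedding $B^{\Mi,t}_{r,o}\hookrightarrow F^{\Mi,s_0}_{p_0,q_0}$ with $1<o<p_0$ from \eqref{1.21}, and derives the contradiction $o\le1$. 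You instead compute directly that dilation packets with $t=s-\fracc{|\M|}{p}$ have $F^{\Mi,s}_{p,q}$-norm $\approx N^{\fracci1p}$ uniformly in $N$ (and $l$); this claim is in fact true\,---\,pointwise the $\ell_q$-expression is comparable to its largest term, of size about $2^{\min(j,\cdot)|\Mi|/p}$ on the shell where $[x]\approx2^{-j}$, by the Schwartz decay of $\rho$ and $\omega$, and summing over the $N$ disjoint dyadic shells gives the two-sided bound\,---\,and it yields $\fracc1{q_0}+\fracc1{p_1}\ge1$ resp.\ $p_0\le1$ exactly as you say. The trade-off: the paper's embedding trick reuses only the exact Besov norms recorded in Lemma~\ref{ncss-lem} and needs no tail estimates, while your route is more self-contained and explains, rather than imports, why $q$ is replaced by $p$ in the $F$-cases; but the uniform bound $\approx N^{\fracci1p}$ is precisely the computation Lemma~\ref{ncss-lem} omits, so on your route it must be proved as a supplementary lemma\,---\,you correctly single it out as the one step demanding care.
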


\begin{proof} Observe first, that in any case one
has $\norm{\cal F^{-1}\Phi_0\cal Fu}{L_{p_2}}\le\norm u{A_2}$
by the definition of $\norm{\cdot}{B^{\Mi,s}_{p,q}}$ 
and $\norm{\cdot}{F^{\Mi,s}_{p,q}}$.

By application of \eqref{2.2} to \eqref{2.13} it follows that
 \begin{equation}
 \begin{aligned}
 \norm{\cal F^{-1}\Phi_0\cal F\rho_k^2}{L_{p_2}}&\le
 \norm{\rho_k^2}{A_2}\le c\norm{\rho_k}{A_0}\norm{\rho_k}{A_1}\,\\
 &\le\,c\norm{\rho}{L_{p_0}}\norm{\rho}{L_{p_1}}
 2^{k(s_0+s_1-|\M|(\fracci1{p_0}+\fracci1{p_1}))},   
 \end{aligned}
 \label{2.16}
 \end{equation}
and  taken together, since $\rho\ne 0$, \eqref{2.3} and \eqref{2.16} show that 
 \begin{align}
 0\,<\, \norm{\cal F^{-1}\Phi_0}{L_{p_2}}\norm\rho{L_2}^2
 &=\liminf2^{k|\Mi|}\norm{\cal F^{-1}\Phi_0\cal F\rho_k^2}{L_{p_2}}
 \notag \\
 &\le\,c\norm{\rho}{L_{p_0}}\norm{\rho}{L_{p_1}}
 \liminf2^{k(s_0+s_1-|\Mi|(\fracci1{p_0}+\fracci1{p_1}-1))}. 
 \label{2.17}
 \end{align}
Here $s_0+s_1-|\M|(\fracc1{p_0}+\fracc1{p_1}-1)<0$ would be
absurd, so (1) in \eqref{2.14} follows.

For $s_0+s_1=|\M|(\fracc1{p_0}+\fracc{1}{p_1}-1)$ we  conclude 
from \eqref{2.13} that, with $t_j=s_j-\fracc{|\M|}{p_j}$,
 \begin{equation}
 \norm{\check\Phi_0*(\rho^{(t_0)}_{N,l}
 \rho^{(t_1)}_{N,l})}{L_{p_2}}\le
 \norm{\rho^{(t_0)}_{N,l}\rho^{(t_1)}_{N,l}}{A_2} 
 \le\,c\norm{\rho}{L_{p_0}}\norm{\rho}{L_{p_1}}
 N^{\fracci{1}{q_0}+\fracci{1}{q_1}},              
 \label{2.18}
 \end{equation}
when $A_0$ and $A_1$ are Besov spaces. By \eqref{2.6} there exists  
for each $N$ an $l$ such that
 \begin{equation}
 \tfrac{1}{2}\norm{\rho}{L_2}^2\norm{\cal F^{-1}\Phi_0}{L_{p_2}}N
 \le\norm{\cal F^{-1}\Phi_0\cal F(\rho^{(t_0)}_{N,l}
 \rho^{(t_1)}_{N,l})}{L_{p_2}},
 \label{2.19}
 \end{equation}
and therefore \eqref{2.18} and \eqref{2.19} gives a contradiction for
a big $N$ unless $\fracc{1}{q_0}+\fracc{1}{q_1}\ge1$. 

When $A_0$ is a Besov space and $A_1=F^{\Mi,s_1}_{p_1,q_1}$ the
embedding $B^{\Mi,t}_{r,r}\hookrightarrow F^{\Mi,s_1}_{p_1,q_1}$ 
holds for every $r<p$ when $t-\fracc{|\M|}r=s_1-\fracc{|\M|}{p_1}$.
Then \eqref{2.13} holds with $A_1$ replaced by $B^{\Mi,t}_{r,r}$. Since
$s_0+t=\fracc{|\M|}{p_0}+\fracc{|\M|}{r}-|\M|$ the statement on
the $BB\bigdot$ cases gives that $\fracc1{q_0}+\fracc1r\ge1$. Then
$\fracc1{q_0}+\fracc1{p_1}=\inf\{\,\fracc1{q_0}+\fracc1r\mid r<p_1\,\}\ge1$.
This proves $(1')$.

\smallskip

The proof of (2), $(2')$, (3) and $(3')$ is due to \name{Franke}, 
who treated some of the eight cases with $\M=(1,\dots,1)$ in \cite{F3}.
(2) is found from \eqref{2.13} with $f=\theta_k$, $g=\theta_{-k}$ and
the fact that $\theta _k\theta _{-k}=\theta^2$, and \eqref{2.5}
together with \eqref{2.6}  gives $(2')$.

To show (3) one can take $f$ and $g$ equal to $\theta_k$ and $\theta$
respectively $\theta$ and $\theta_k$, and $(3')$ is obtained with $f$ 
and $g$ equal to $\theta^{(s_0)}_{N,+}$ and $\theta$ respectively 
$\theta$ and $\theta^{(s_1)}_{N,+}$.

(4) is due to \name{Sickel}, \cite{S}. The proof consists of an insertion of 
$f=g=\rho_k$ for $-k\in\N$ into \eqref{2.13} and an application 
of \eqref{2.2}. 

\smallskip

Concerning (5) for $j=0$ one has $\omega\theta(2^{-l\Mi}\cdot)\to
\omega$ in $L_p$ for $l\to\infty$ (by a majorisation), so for $k$ large enough
 \begin{equation}
 0<\tfrac{1}{2}\norm{\omega}{L_{p_2}}2^{k(s_2-\fracci{|\Mi|}{p_2})}
 \le c\norm{\omega}{L_{p_0}}\norm{\theta}{L_{p_1}}
 2^{k(s_0-\fracci{|\Mi|}{p_0})}
 \label{2.23}
 \end{equation}
by \eqref{2.2}; for $j=1$ the roles of $\omega_k$ and $\theta$ can be 
interchanged. 

$(5')$ is obtained analogously from 
$\omega^{(s_0-\fracci{|\Mi|}{p_0})}_N$ and $\theta$ respectively 
$\theta$ and $\omega^{(s_1-\fracci{|\Mi|}{p_1})}_N$.

Condition (6) can  be shown by insertion of $f=g=\omega_k$
into \eqref{2.13} followed by use of \eqref{2.2}. For $(6')$ formula
\eqref{2.5''} leads to the inequalities, where $t_j=s_j-\fracc{|\M|}{p_j}$, 
 \begin{equation}
 2^{s_2}\norm{\omega^2}{L_{p_2}}N^{\fracci1{q_2}}\le
 \norm{\Omega^{(t_0)}_N
   \Omega^{(t_1)}_N}{B^{\Mi,s_2}_{p_2,q_2}}\le
   c\norm{\omega}{L_{p_0}}\norm{\omega}{L_{p_1}}
   N^{\fracci1{q_0}+\fracci1{q_1}}
 \label{2.23'}
 \end{equation}
when only terms with $\Phi_{2^k+1}$ are kept in the 
$B^{\Mi,s_2}_{p_2,q_2}$ norm.

Concerning (7) in the $B\bigdot\bigdot$ cases it is found from
\eqref{2.5}, \eqref{2.6'} with a large $N$ and the assumption
$s_0=\fracc{|\Mi|}{p_0}$ that
 \begin{equation}
 \tfrac12 2^{3N(s_2-\fracci{|\Mi|}{p_2})}N \norm{\omega}{L_{p_2}}\le
 c\norm{\omega}{L_{p_0}}\norm{\omega}{L_{p_1}} N^{\fracci1{q_0}}
 2^{3N(s_1-\fracci{|\Mi|}{p_1})}.
 \label{2.24}
 \end{equation}
The second assumption, $s_2-\fracc{|\M|}{p_2}=s_1-\fracc{|\M|}{p_1}$,
then leads to the conclusion $q_0\le1$. The $F\bigdot\bigdot$ cases
can be reduced to the $B\bigdot\bigdot$ cases. Indeed, if $p_0>1$ is
assumed, there is a Sobolev embedding $B^{\Mi,t}_{r,o}\hookrightarrow
F^{\Mi,s_0}_{p_0,q_0}$ with $t-\fracc{|\M|}{r}=s_0-\fracc{|\M|}{p_0}$,
$p_0>r$ and $1<o<p_0$ according to \eqref{1.21}. But then \eqref{2.13}
holds with $A_0=F^{\Mi,s_0}_{p_0,q_0}$ replaced by $B^{\Mi,t}_{r,o}$,
hence $o\le1$ is necessary and the assumption $p_0>1$ is absurd.
\end{proof}

When (5) and (6) in Theorem~\ref{ncss-thm} are taken
together, they may be written
 \begin{align}
 s_2-\fracc{|\M|}{p_2}&\le\min(s_0-\fracc{|\M|}{p_0},s_1-
  \fracc{|\M|}{p_1},s_0-\fracc{|\M|}{p_0}+s_1-\fracc{|\M|}{p_1})
 \nonumber \\
 &=:\textstyle{\min^+}(s_0-\fracc{|\M|}{p_0},s_1-\fracc{|\M|}{p_1})
  =:\textstyle{\min^+_{j=0,1}}(s_j-\fracc{|\M|}{p_j})
 \label{2.30} \\
 &=:\textstyle{\min^+}(s_j-\fracc{|\M|}{p_j}).
 \nonumber
 \end{align}
For later reference it is observed that for $s_2=s_1$ formula
\eqref{2.30} is equivalent to 
 \begin{equation}
 \fracc{|\M|}{p_2}\ge \max(\fracc{|\M|}{p_0}+s_1-s_0,
 \fracc{|\M|}{p_1}, \fracc{|\M|}{p_1}+\fracc{|\M|}{p_0}-s_0).
 \label{2.31}
 \end{equation}

\begin{rem} \label{cnd1-rem}
When the embeddings $A_j\hookrightarrow L_{t_j}$ hold for $j=0$ and $1$ with
$-\frac{|\M|}{t_j}=s_j-\frac{|\M|}{p_j}$, cf.~\eqref{1.24} and \eqref{1.25}, 
(1) in \eqref{2.14} amounts to
$\frac{1}{t_2}:=\frac{1}{t_0}+\frac{1}{t_1}\le1$. Then
Proposition \ref{lp-prop} shows that $\pi$ equals $\mu$ 
on $A_0\oplus A_1$ and that $\pi(A_0\oplus A_1)\subset L_{t_2}$ 
where $t_2\ge1$. We may therefore interprete (1) in \eqref{2.14} as a
condition assuring that  $\pi(A_0\oplus A_1)$ is a distribution space.
\end{rem}

\begin{rem} \label{algbr-rem}
Applied to the situation where $A_0=A_1=A_2$ the
condition (6) in \eqref{2.14} amounts to $s\ge\fracc{|\M|}{p}$, and
for the borderline case $s=\fracc{|\M|}{p}$ condition (7) gives
$q\le1$ and $p\le1$ in the $BBB$ respectively $FFF$ cases. 

For $\M=(1,\dots,1)$ these conditions are known to be necessary (and
sufficient too for $s>0$) for $B^{\Mi,s}_{p,q}$ respectively 
$F^{\Mi,s}_{p,q}$ to be algebras. The proof of the necessity given here, 
for general $\M$, seems simpler than those in \cite{T0} and \cite{F3}. 
\end{rem}

\begin{rem} \label{ncss-rem} 
The conditions (1), $(1')$, (5), $(5')$, (6)
and $(6')$ above have seemingly not been published before,
but from a personal conversation the author knows that \name{W.~Sickel} has
obtained some of these independently.

Condition (7) generalises \cite[Rem.~III.13]{S}, where
$s_0=s_1=\fracc{n}{p_0}=\fracc{n}{p_1}$ is assumed. (However,
the intersection of one factor with $L_\infty$ is included there.)
\end{rem}

\begin{rem} \label{embd-rem} 
As an exercise one may use Lemma \ref{ncss-lem} to analyse 
the optimality of the linear embeddings in Section \ref{embd-ssect}.
See also \cite{ST} for sharp results in the isotropic case.

In particular, if $B^{\Mi,s}_{p,q}\hookrightarrow L_\infty$,
the $\rho_k$-part of \eqref{2.2} yields $s\ge\fracc{|\M|}p$. For
$s=\fracc{|\M|}p$ the inequality $\norm{\cdot}{L_\infty}\le c
\norm{\cdot}{B^{\Mi,s}_{p,q}}$ gives with $\omega^{(0)}_N$ inserted
that
 \begin{equation}
 N=N\omega(0)\le c\norm{\omega}{L_p}N^{\fracci1q},
 \label{2.40}
 \end{equation}
so $q\le1$ follows. This shows the optimality of \eqref{1.22}.
 
Similarly it is found from the properties of $\rho_k$ that
$F^{\Mi,s}_{p,q}\hookrightarrow L_\infty$ imply that
$s\ge\fracc{|\M|}p$. And as in the proof of (7) in
Theorem~\ref{ncss-thm} above it is found for $s=\fracc{|\M|}p$ that
$p\le1$ is necessary. Hence \eqref{1.23} is optimal for
$\M=(1,\dots,1)$.
Otherwise it is open whether $F^{\Mi,|\Mi|}_{1,q}$ with $1<q\le\infty$
is embedded into $L_\infty$ or not.

These counterexamples concerning $L_\infty$ are not
only valid for general $\M$, but they also seem simpler than the
arguments for the isotropic cases in \cite{F3,T0}.
\end{rem}

\section{Estimates of para-multiplication operators}  \label{estm-sect}

First the basic consequences of Yamazaki's theorems are collected.
The approach is essentially known since it is
adopted from \cite{Y1} and \cite{S}. However, these
references are inadequate for our purposes, so we state and prove 
Theorem~\ref{basic-thm}. 

It should be noted that \name{Sickel} for the isotropic versions of
\eqref{3.8}--\eqref{3.11} below has introduced a shorter formulation
by means of the local Hardy spaces $h_p=F^{0}_{p,2}$ ($0<p<\infty$),
cf.~\cite{Sic91,ST}, but the proof becomes less elementary, then.

\begin{thm} \label{basic-thm} 
Let $s$, $s_0$ and $s_1\in\R$ be given
together with $p_0$, $p_1$, $q$, $q_0$ and $q_1$ in $\,]0,\infty]$, and let 
$s_2=s_0+s_1$, $\fracc{1}{p_2}=\fracc{1}{p_0}+\fracc{1}{p_1}$ and 
$\fracc{1}{q_2}=\fracc{1}{q_0}+\fracc{1}{q_1}$. Then the operators
 \begin{alignat}{4}
 \pi_1&\colon L_{p_0}(\Rn)&&\oplus B^{\Mi,s}_{p_1,q}(\Rn)&&\to
 B^{\Mi,s}_{p_2,q}(\Rn), &\quad\text{ for}\quad& 1\le p_0\le\infty,
 \label{3.8} \\
 \pi_1&\colon L_{p_0}(\Rn)&&\oplus F^{\Mi,s}_{p_1,q}(\Rn)&&\to
 F^{\Mi,s}_{p_2,q}(\Rn), &\quad\text{ for}\quad& 1<p_0\le\infty,\quad
 p_1<\infty, 
 \label{3.9} \\
 \pi_1&\colon F^{\Mi,0}_{p_0,1}(\Rn)&&\oplus B^{\Mi,s}_{p_1,q}(\Rn)&&\to
 B^{\Mi,s}_{p_2,q}(\Rn), &\quad\text{ for}\quad& 0< p_0<\infty,
 \label{3.10} \\
 \pi_1&\colon F^{\Mi,0}_{p_0,1}(\Rn)&&\oplus F^{\Mi,s}_{p_1,q}(\Rn)&&\to
 F^{\Mi,s}_{p_2,q}(\Rn), &\quad\text{ for}\quad& 0< p_0<\infty,
 \quad p_1<\infty
 \label{3.11} 
 \end{alignat}
are bounded, and $\pi_3$ has similar properties when the summands are
interchanged (since $\pi_3(u,v)=\pi_1(v,u)$).

For $s_0+s_1>|\M|\max(0,\fracc{1}{p_0}+\fracc{1}{p_1}-1)$, i.e., 
$s_2>|\M|(\fracc1{p_2}-1)_+$, the operator 
 \begin{equation}
 \pi_2\colon B^{\Mi,s_0}_{p_0,q_0}(\Rn)\oplus B^{\Mi,s_1}_{p_1,q_1}(\Rn)\to
 B^{\Mi,s_2}_{p_2,q_2}(\Rn),
 \label{3.16}
 \end{equation}
is bounded, and if $s_2>|\M|(\fracc{1}{\min(p_2,q_2)}-1)_+$ and 
both $p_0$ and $p_1<\infty$, so is
 \begin{equation}
 \pi_2\colon F^{\Mi,s_0}_{p_0,q_0}(\Rn)\oplus F^{\Mi,s_1}_{p_1,q_1}(\Rn)\to
 F^{\Mi,s_2}_{p_2,q_2}(\Rn).
 \label{3.17}
 \end{equation}

Furthermore, if $s_0<0$ the operator $\pi_1$ is continuous
 \begin{alignat}{3} 
 \pi_1&\colon B^{\Mi,s_0}_{p_0,q_0}(\Rn)&&\oplus B^{\Mi,s_1}_{p_1,q_1}(\Rn)
 &&\to B^{\Mi,s_2}_{p_2,q_2}(\Rn),
 \label{3.18} \\
 \pi_1&\colon F^{\Mi,s_0}_{p_0,q_0}(\Rn)&&\oplus F^{\Mi,s_1}_{p_1,q_1}(\Rn)
 &&\to F^{\Mi,s_2}_{p_2,q_2}(\Rn),\quad\text{ for$\quad p_0$ and $p_1<\infty$},
 \label{3.19}
 \end{alignat}
and for $s_1<0$ the operator $\pi_3$ has similar properties.
\end{thm}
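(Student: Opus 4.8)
The plan is to verify every assertion by feeding the series defining $\pi_1$, $\pi_2$ and $\pi_3$ in \eqref{1.8}--\eqref{1.10} into Yamazaki's convergence theorems (Theorems~\ref{Y1-thm} and~\ref{Y2-thm}). The choice of theorem is forced by the spectral localisations already recorded in \eqref{3.5'}--\eqref{3.6}: the summands of $\pi_1$ and $\pi_3$ sit in dyadic annuli, so Theorem~\ref{Y1-thm} applies directly, whereas the summands of $\pi_2$ only sit in balls $\{[\xi]\le R2^{k+1}\}$, which forces the use of Theorem~\ref{Y2-thm} and thereby explains the smoothness restrictions $s_2>|\M|(\fracc1{p_2}-1)_+$ in \eqref{3.16} and $s_2>|\M|(\fracc1{\min(p_2,q_2)}-1)_+$ in \eqref{3.17}. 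In each case the matter reduces to a single multiplicative bound on the sequence- or function-space norm of the summands, to be produced by H\"older's inequality in $L_p$ (with $\fracc1{p_2}=\fracc1{p_0}+\fracc1{p_1}$) and in $\ell_q$ (with $\fracc1{q_2}=\fracc1{q_0}+\fracc1{q_1}$); the statements for $\pi_3$ then come for free from $\pi_3(u,v)=\pi_1(v,u)$.

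For the basic cases \eqref{3.8}--\eqref{3.11} I would write the $\pi_1$-summand as $w_j=u^{j-2}v_j$ and estimate $\norm{w_j}{L_{p_2}}\le\norm{u^{j-2}}{L_{p_0}}\norm{v_j}{L_{p_1}}$, so that everything rests on controlling the low-frequency factor $u^{j-2}$ uniformly in $j$. For \eqref{3.8} this is Young's inequality, $\norm{u^{j-2}}{L_{p_0}}\le c\norm{u}{L_{p_0}}$ (the relevant kernel has $L_1$-norm independent of $j$), valid for $p_0\ge1$; inserting it and taking the $\ell_q$-norm yields the Besov bound, and Theorem~\ref{Y1-thm}(1) concludes. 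For the $F$-target \eqref{3.9} I would instead dominate $\sup_k|u^k(x)|$ by the anisotropic Hardy--Littlewood maximal function, which is $L_{p_0}$-bounded exactly when $p_0>1$, pass this pointwise factor through the $\ell_q$-norm, and close with Theorem~\ref{Y1-thm}(2). For \eqref{3.10}--\eqref{3.11} the point is that $F^{\Mi,0}_{p_0,1}\hookrightarrow L_{p_0}$ for all $p_0>0$: the monotone bound $|u^{j-2}(x)|\le\sum_k|u_k(x)|$ gives both $\sup_j\norm{u^{j-2}}{L_{p_0}}\le\norm{u}{F^{\Mi,0}_{p_0,1}}$ and a $j$-independent pointwise majorant, covering the Besov and the Triebel--Lizorkin targets in one stroke.

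The $\pi_2$ cases \eqref{3.16}--\eqref{3.17} are the most transparent: one estimates $\norm{u_jv_j}{L_{p_2}}\le\norm{u_j}{L_{p_0}}\norm{v_j}{L_{p_1}}$ (the shifted terms $u_{j-1}v_j$ and $u_jv_{j-1}$ differ only by an index shift and a fixed power of $2$), splits $2^{s_2j}=2^{s_0j}2^{s_1j}$, and applies H\"older in $\ell_q$ for the Besov estimate, respectively H\"older pointwise in $\ell_q$ followed by H\"older in $L_p$ for the Triebel--Lizorkin estimate; Theorem~\ref{Y2-thm} then delivers convergence under the stated smoothness hypotheses.

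The hard part will be \eqref{3.18}--\eqref{3.19}, where $s_0<0$ and the low-frequency factor $u^{j-2}=u_0+\dots+u_{j-2}$ can no longer be bounded uniformly in $j$. Here I would exploit the sign of $s_0$ through Lemma~\ref{Y-lem}. After splitting $2^{s_2j}=2^{s_0j}2^{s_1j}$ and applying H\"older in $\ell_q$, a factor $\Norm{\{2^{s_0j}\norm{u^{j-2}}{L_{p_0}}\}}{\ell_{q_0}}$ is isolated (respectively, in the $F$-case, the pointwise quantity $\norm{\{2^{s_0j}u^{j-2}(x)\}}{\ell_{q_0}}$). Using the $r$-triangle inequality in $L_{p_0}$ with $r=\min(1,p_0)$, cf.~\eqref{1.27}, one bounds $\norm{u^{j-2}}{L_{p_0}}\le(\sum_{k=0}^{j}\norm{u_k}{L_{p_0}}^r)^{\fracci1r}$, and then Lemma~\ref{Y-lem}, applied to $a_k=\norm{u_k}{L_{p_0}}$ with this $r$ and $s=s_0<0$, collapses this into $c\norm{u}{B^{\Mi,s_0}_{p_0,q_0}}$; Theorem~\ref{Y1-thm}(1) finishes \eqref{3.18}. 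For \eqref{3.19} the same lemma is used pointwise with $r=1$ and $a_k=|u_k(x)|$ to obtain $\norm{\{2^{s_0j}u^{j-2}(x)\}}{\ell_{q_0}}\le c\norm{\{2^{s_0j}u_j(x)\}}{\ell_{q_0}}$, whereupon H\"older in $L_{p_2}$ and Theorem~\ref{Y1-thm}(2) conclude. The main technical care needed throughout is to track the index shifts (from $j-2$ to $j$) and the differing sum-exponents so that the product structure $\ell_{q_0}\cdot\ell_{q_1}\hookrightarrow\ell_{q_2}$ is respected.
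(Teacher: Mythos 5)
Your proposal is correct and follows essentially the same route as the paper's own proof: the same reduction to Theorems~\ref{Y1-thm} and~\ref{Y2-thm} via the spectral localisations \eqref{3.5'}--\eqref{3.6}, the same H\"older estimates in $L_p$ and $\ell_q$, Young's inequality respectively a maximal-function bound for the low-frequency factor in \eqref{3.8}--\eqref{3.11}, the bound by $F^{\Mi,0}_{p_0,1}$ in \eqref{3.10}--\eqref{3.11}, and the same combination of \eqref{1.27} with Lemma~\ref{Y-lem} for the cases with $s_0<0$. The only cosmetic differences are that you invoke the anisotropic Hardy--Littlewood maximal function where the paper cites Corollary~2.9 of \cite{Y1}, and that you omit the harmless normalisation $\psi_j=\Psi_j$ justified by Remark~\ref{PSI-rem}.
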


\begin{proof} 
It may be assumed that $\psi_j=\Psi_j$ and $\varphi_j=\Phi_j$ for if
not the estimates below are valid with 
$\Norm{ \{2^{sj}\norm{u_j}{L_p}\} }{\ell_q}$ instead of 
$\norm{u}{B^{\Mi,s}_{p,q}}$ etc., and then Remark~\ref{PSI-rem} applies.
We begin with \eqref{3.18}.

For $u\in B^{\Mi,s_0}_{p_0,q_0}$ and $v\in B^{\Mi,s_1}_{p_1,q_1}$ one has,
when $r_0=\min(1,p_0)$ and $a_j=\norm{u_j}{L_{p_0}}$,
 \begin{align}
 \Norm{\bigl\{ \norm{2^{s_2j}u^{j-2} v_j}{L_{p_2}}\bigr\}
 ^\infty_{j=0}}{\ell_{q_2}}&\le
 \Norm{ \bigl\{ \norm{2^{s_0j}u^{j-2}}{L_{p_0}}
 \norm{2^{s_1j}v_j}{L_{p_1}} \bigr\}^\infty_{j=0} }{\ell_{q_2}}
 \notag \\
 & \le\, \norm{ \{ 2^{s_0j}( a_0^{r_0}+\dots
 +a_j^{r_0})
 ^{\frac{1}{r_0}} \}  }{\ell_{q_0}}
 \norm{v}{B^{\Mi,s_1}_{p_1,q_1}}
 \notag \\
 &\le\, c\norm{u}{B^{\Mi,s_0}_{p_0,q_0}}\norm{v}{B^{\Mi,s_1}_{p_1,q_1}},
 \label{3.20} 
 \end{align}
by use of H\"older's inequality, \eqref{1.27} and Lemma \ref{Y-lem}.
Since \eqref{3.5'} and \eqref{3.20} show that the conditions in
Theorem \ref{Y1-thm} are satisfied, \eqref{3.18} follows. Similarly one can 
prove \eqref{3.19} and the analogous properties of
$\pi_3$ when $s_1<0$.

For the treatment of $\pi_2$ one can use the estimate
 \begin{align}
 \Norm{\norm{ \{2^{s_2j}u_{j-1} v_j\} }{\ell_{q_2}}(\cdot) }{L_{p_2}}&\le
 \Norm{ 2^{s_0}\norm{ \{2^{s_0j}u_j\} }{\ell_{q_0}}
 \norm{ \{2^{s_1j}v_j\} }{\ell_{q_1}} }{L_{p_2}}
 \nonumber \\
 &\le\,2^{s_0}\norm{u}{F^{\Mi,s_0}_{p_0,q_0}}\norm{v}{F^{\Mi,s_1}_{p_1,q_1}}
 \label{3.21}
 \end{align}
along with similar estimates of $u_j v_j$ and $u_j v_{j-1}$ to
conclude that
 \begin{equation} 
 \Norm{\norm{ \{2^{s_2j}(u_{j-1} v_j+u_j v_j+
 u_j v_{j-1})\} }{\ell_{q_2}} }{L_{p_2}} \le  
 c\norm{u}{F^{\Mi,s_0}_{p_0,q_0}} \norm{v}{F^{\Mi,s_1}_{p_1,q_1}},
 \label{3.22}
 \end{equation}
where $c$ is proportional to $2^{s_0}+1+2^{s_1}$. In view of \eqref{3.6}
and Theorem \ref{Y2-thm} this proves \eqref{3.17}. 
\eqref{3.16} is proved similarly.

The formulae \eqref{3.8} and \eqref{3.10} are deduced from the estimate 
 \begin{equation}
 \Norm{ \bigl\{2^{sj}\norm{u^{j-2}v_j}{L_{p_2}}\bigr\}^\infty_{j=0} }{\ell_q}
 \le\sup_k\norm{u^k}{L_{p_0}}
 \Norm{ \bigl\{2^{sj}\norm{v_j}{L_{p_1}}\bigr\} }{\ell_q},
 \label{3.23}
 \end{equation}
while \eqref{3.9} and \eqref{3.11} are based on a version
of \eqref{3.23} with $\Norm{\sup_k |u^k|}{L_{p_0}}$.
Indeed, in \eqref{3.23} we can introduce the estimates, where 
$a=\norm{\check\Psi}{L_1}$,
 \begin{align} 
 \sup_k \norm{u^k}{L_{p_0}}&\le\sup_k\norm{\check\Psi_k}{L_1}
 \norm{u}{L_{p_0}} \le\, a \norm{u}{L_{p_0}},
 \label{3.25} \\
 \sup_k \norm{u^k}{L_{p_0}}&\le
 \sup_k\Norm{|u_0|+\dots+|u_k|}{L_{p_0}}=
 \norm{u}{F^{\Mi,0}_{p_0,1}},
 \label{3.26} 
 \end{align}
which by application of Theorem \ref{Y1-thm} shows 
\eqref{3.8} and \eqref{3.10}, respectively.

In the $F$ case one can estimate, for $0<p_0<\infty$ 
respectively $1<p_0\le\infty$,
 \begin{align} 
 \Norm{\sup_k|u^k|}{L_{p_0}} &\le
 \Norm{\sup_k(|u_0|+\dots+|u_k|)}{L_{p_0}}\le\norm{u}{F^{\Mi,0}_{p_0,1}},
 \label{3.27} \\
 \Norm{\sup_k|u^k|}{L_{p_0}} &\le  
 c\norm{u}{L_{p_0}}, 
 \label{3.28} 
 \end{align}
where $c=a$ for $p_0=\infty$,
while for $1<p_0<\infty$ Corollary 2.9 in \cite{Y1} applies.
\end{proof}

Next we include in Corollary~\ref{basic-cor} various properties
that are directly applicable to the sufficient conditions with $p_0\ne
p_1\ne p_2$ in Section~\ref{suff-sect} below.

In the sequel,  $s_0\ge s_1$ is assumed for simplicity
(as we may by commutativity of $\pi$). 
For a sum-exponent $t$, the requirement $q_0\kd{s_0=s_1}\le t\le\infty$
reduces to $0<t\le \infty$ for $s_0\ne s_1$, since only $t>0$ is allowed. 
A similar remark applies to integral-exponents in the $F$~case.
Recall the $s_2$, $p_2$ and $q_2$ notation of Theorem.~\ref{basic-thm}.

\begin{cor} \label{basic-cor}
Let $s_j\in\R$, $p_j\in\,]0,\infty]$ and $q_j\in\,]0,\infty]$ be given for
$j=0$  and $1$ such that $s_0+s_1>|\M|(\fracc1{p_0}+\fracc1{p_1}-1)_+$ and
$s_0\ge s_1$.

Then, if $p_0$ and $p_1<\infty$, the bilinear operators
 \begin{align}
  F^{\Mi,s_0}_{p_0,q_0}\oplus F^{\Mi,s_1}_{p_1,q_1}& \xrightarrow{\;\pi_1\;}
  \bigcap\bigl\{\,F^{\Mi,s_1}_{r,q_1}\bigm| \fracc{|\M|}{p_1}+
   (s_0-\fracc{|\M|}{p_0})_-<\fracc{|\M|}{r}\le\fracc{|\M|}{p}\,\bigr\},
   \label{4.8}
 \\
   F^{\Mi,s_0}_{p_0,q_0}\oplus F^{\Mi,s_1}_{p_1,q_1} &\xrightarrow{\;\pi_2\;}
   \bigcap\bigl\{\,F^{\Mi,s_1}_{r,t}\bigm| 0<t\le\infty,\ (\fracc{|\M|}{p_1}
   +\fracc{|\M|}{p_0}-s_0)_+\le\fracc{|\M|}{r}\le\fracc{|\M|}{p}\,\bigr\},
   \label{4.9}
 \\
   F^{\Mi,s_0}_{p_0,q_0}\oplus F^{\Mi,s_1}_{p_1,q_1} &\xrightarrow{\;\pi_3\;}
   \bigcap\bigl\{\,F^{\Mi,s_1}_{r,t}\bigm| q_0\kd{s_0=s_1}\le t\le\infty,
 \nonumber
 \\
   &\hphantom{\xrightarrow{\;\pi_3\;} \bigcap\bigl\{\,F_{r}}
   (\fracc{|\M|}{p_1}+\fracc{|\M|}{p_0}-s_0+(s_1-\fracc{|\M|}{p_1})_+)_+
   <\fracc{|\M|}{r}\le\fracc{|\M|}{p}\,\bigr\},
   \label{4.10}   
 \end{align}
are bounded. In addition the value $\fracc{|\M|}r= \fracc{|\M|}{p_1}+
(s_0-\fracc{|\M|}{p_0})_-$ may be included in \eqref{4.8} under the
condition that
$F^{\Mi,s_0}_{p_0,q_0}\hookrightarrow L_\infty$ holds if
$s_0=\fracc{|\M|}{p_0}$. Similarly $\fracc{|\M|}{r}=
(\fracc{|\M|}{p_1}+\fracc{|\M|}{p_0}-s_0+(s_1-\fracc{|\M|}{p_1})_+)_+$
may be included in \eqref{4.10} provided  $F^{\Mi,s_1}_{p_1,q_1} 
\hookrightarrow L_\infty$ holds if $s_1=\fracc{|\M|}{p_1}$.

Furthermore, there is boundedness of the operators
 \begin{align}
   B^{\Mi,s_0}_{p_0,q_0}\oplus B^{\Mi,s_1}_{p_1,q_1} &\xrightarrow{\;\pi_1\;}
   \bigcap\bigl\{\,B^{\Mi,s_1}_{r,q_1}\bigm| \fracc{|\M|}{p_1}+
   (s_0-\fracc{|\M|}{p_0})_-<\fracc{|\M|}{r}\le\fracc{|\M|}{p}\,\bigr\},
   \label{4.8'}
 \\
   B^{\Mi,s_0}_{p_0,q_0}\oplus B^{\Mi,s_1}_{p_1,q_1} &\xrightarrow{\;\pi_2\;}
   \bigcap\bigl\{\,B^{\Mi,s_1}_{r,t}\bigm| 0<t\le\infty,\ (\fracc{|\M|}{p_1}
   +\fracc{|\M|}{p_0}-s_0)_+<\fracc{|\M|}{r}\le\fracc{|\M|}{p}\,\bigr\},
   \label{4.9'}
 \\
   B^{\Mi,s_0}_{p_0,q_0}\oplus B^{\Mi,s_1}_{p_1,q_1} &\xrightarrow{\;\pi_3\;}
   \bigcap\bigl\{\,B^{\Mi,s_1}_{r,t}\bigm| q_0\kd{s_0=s_1}\le t\le\infty,
 \nonumber
 \\
   &\hphantom{\xrightarrow{\;\pi_3\;}\bigcap\bigl\{\,B_{r}}
   (\fracc{|\M|}{p_1}+\fracc{|\M|}{p_0}-s_0+(s_1-\fracc{|\M|}{p_1})_+)_+
   <\fracc{|\M|}{r}\le\fracc{|\M|}{p}\,\bigr\}.
   \label{4.10'}
 \end{align}
When $\frac{|\M|}{t_j}=\frac{|\M|}{p_j}-s_j$ one can include 
$B^{\Mi,s_1}_{r,q_1}$ with
$\fracc{|\M|}r=\fracc{|\M|}{p_1}+(s_0-\fracc{|\M|}{p_0})_-$ 
in the intersection in \eqref{4.8'} if
 \begin{alignat}{2}
 s_0&>\fracc{|\M|}{p_0}, &&
 \label{4.8a}\\
 s_0&=\fracc{|\M|}{p_0} &\quad\text{and}\quad q_0&\le1, 
 \label{4.8b}\\
 s_0&<\fracc{|\M|}{p_0} &\quad\text{and}\quad q_0&< t_0,
 \label{4.8c} \\
 s_0&<\fracc{|\M|}{p_0} &\quad\text{and}\quad q_0&\le t_0
  \quad\text{and either $\M=(1,\dots,1)$ or $t_0\le2$}.
 \label{4.8d} 
 \end{alignat}
In \eqref{4.9'} the space $B^{\Mi,s_1}_{r,q_2}$ may be included   
when $\fracc{|\M|}{r}=(\fracc{|\M|}{p_1}+\fracc{|\M|}{p_0}-s_0)_+$.

$B^{\Mi,s_1}_{r,q_0}$ with $\fracc{|\M|}{r}=(\fracc{|\M|}{p_1}+
\fracc{|\M|}{p_0}-s_0+(s_1-\fracc{|\M|}{p_1})_+)_+$ may be included in
\eqref{4.10'} if
 \begin{alignat}{2}
 s_1&>\fracc{|\M|}{p_1}, &&
 \label{4.10a}\\
 s_1&=\fracc{|\M|}{p_1} &\quad\text{and}\quad q_1&\le1 , 
 \label{4.10b}\\
 0<s_1&<\fracc{|\M|}{p_1} &\quad\text{and}\quad q_1&< t_1 ,
 \label{4.10c} \\
 0<s_1&<\fracc{|\M|}{p_1} &\quad\text{and}\quad  q_1&\le t_1
  \quad\text{and either $\M=(1,\dots,1)$ or $t_1\le2$},
 \label{4.10d} \\
\intertext{and $B^{\Mi,s_1}_{r,q_2}$ with $\fracc{|\M|}{r}=(\fracc{|\M|}{p_1}+
\fracc{|\M|}{p_0}-s_0)_+$ may be included if}
 s_1&\le0. &&  
 \label{4.10e}
 \end{alignat}
\end{cor}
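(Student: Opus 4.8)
The plan is to obtain the whole corollary by sandwiching Theorem~\ref{basic-thm} between the embeddings of Section~\ref{embd-ssect}: one first adjusts the two factors so that the relevant part of Theorem~\ref{basic-thm} applies, then post-composes with a Sobolev or simple embedding to reach the stated target, and finally assembles the intersection over $r$. Throughout, $p$ abbreviates $\fracc1p=\fracc1{p_0}+\fracc1{p_1}$, i.e.\ $p=p_2$, and one should note that $s_0\ge s_1$ together with $s_0+s_1>|\M|(\fracc1{p_0}+\fracc1{p_1}-1)_+\ge0$ forces $s_0>0$; this is what legitimises the smoothness reductions used below.

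For $\pi_1$ in \eqref{4.8} I would keep the second factor $F^{\Mi,s_1}_{p_1,q_1}$ fixed, so that the output retains its smoothness $s_1$ and sum-exponent $q_1$ exactly as in \eqref{3.9} and \eqref{3.11}, and reduce the first factor to a multiplier space. For each target $r$ I put $\fracc{|\M|}{\tilde p_0}=\fracc{|\M|}{r}-\fracc{|\M|}{p_1}$ and, using the Sobolev embedding \eqref{1.20'}, embed $F^{\Mi,s_0}_{p_0,q_0}\hookrightarrow F^{\Mi,0}_{\tilde p_0,1}$ (or into $L_{\tilde p_0}$ via \eqref{1.24} when $\tilde p_0>1$), and then apply \eqref{3.11} respectively \eqref{3.9}. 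As $\tilde p_0$ runs from $p_0$, giving the top value $\fracc{|\M|}{r}=\fracc{|\M|}{p}$, towards $t_0$ with $\fracc{|\M|}{t_0}=\fracc{|\M|}{p_0}-s_0$, the quantity $\fracc{|\M|}{r}$ sweeps the interval in \eqref{4.8}, and intersecting over these $r$ gives the claim. Because the $F\to F$ embedding \eqref{1.20'} tolerates arbitrary sum-exponents even at its borderline, the lower endpoint is reached freely when $s_0<\fracc{|\M|}{p_0}$; only at $s_0=\fracc{|\M|}{p_0}$, where $\tilde p_0=\infty$, does one need the separately stated hypothesis $F^{\Mi,s_0}_{p_0,q_0}\hookrightarrow L_\infty$. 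The Besov case \eqref{4.8'} runs identically through \eqref{3.8} and \eqref{3.10}, but there the first factor is reduced via the Besov $L_p$-embedding \eqref{1.25}, whose endpoint $r=t_0$ is admissible only under the alternatives recalled there---and these are exactly \eqref{4.8a}--\eqref{4.8d}.

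The operator $\pi_2$ is treated in the reverse order. From \eqref{3.17}, which applies since the hypothesis gives $s_0+s_1>|\M|(\fracc1{p_2}-1)_+$, one first lands in $F^{\Mi,s_0+s_1}_{p_2,q_2}$, and then descends by the Sobolev embedding \eqref{1.20'} to $F^{\Mi,s_1}_{r,t}$ for every $r$ with $\fracc{|\M|}{r}\ge\fracc{|\M|}{p_2}-s_0=\fracc{|\M|}{p_1}+\fracc{|\M|}{p_0}-s_0$. The cut-off $(\cdot)_+$ in \eqref{4.9} is simply the constraint $\fracc{|\M|}{r}>0$, the whole range $t\in\,]0,\infty]$ is produced because \eqref{1.20'} imposes no condition on sum-exponents (and at the top $r=p_2$ there is room since $s_0>0$, by \eqref{1.19'}). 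The apparent gap between the hypothesis and the stronger requirement $s_2>|\M|(\fracc1{\min(p_2,q_2)}-1)_+$ of \eqref{3.17} is bypassed by routing through the Besov estimate \eqref{3.16} and \eqref{1.19''} when $q_2<p_2$. For \eqref{4.9'} one descends instead through the Besov Sobolev embedding \eqref{1.20}, whose borderline requires $q\le o$; this both forces the strict inequality at the lower end of \eqref{4.9'} and accounts for the sum-exponent $q_2$ in its endpoint clause.

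Finally $\pi_3$ is read off from $\pi_3(u,v)=\pi_1(v,u)$: running the $\pi_1$ scheme with the factors interchanged produces output of smoothness $s_0$ and sum-exponent $q_0$ (after reducing the $s_1$-factor $F^{\Mi,s_1}_{p_1,q_1}$ to a multiplier space), and a concluding Sobolev step lowers the smoothness to $s_1$ and fixes $r$. This two-stage reduction accounts for the shape of \eqref{4.10}: the summand $(s_1-\fracc{|\M|}{p_1})_+$ records how far the $s_1$-factor can be pushed in integrability, while the factor $q_0\kd{s_0=s_1}$ in the sum-exponent arises because the final embedding is a genuine Sobolev embedding---allowing any $t$---when $s_0>s_1$, but degenerates to the simple embedding \eqref{1.19}, costing $t\ge q_0$, when $s_0=s_1$. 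When $s_1\le0$ the $s_1$-factor cannot be reduced, and one instead invokes \eqref{3.19} directly (obtaining $F^{\Mi,s_0+s_1}_{p_2,q_2}$, whence the sum-exponent $q_2$ in the corresponding endpoint clause) before descending; the sub-cases \eqref{4.10a}--\eqref{4.10e} are precisely the conditions of \eqref{1.23} and \eqref{1.25} under which the endpoint reduction (to $L_\infty$, to $L_{t_1}$, or via \eqref{3.19}) is valid. I expect the main difficulty to lie not in any single estimate but in the uniform bookkeeping of the sum-exponents and in matching each endpoint inclusion to the exact hypotheses of the $L_p$- and $L_\infty$-embeddings of Section~\ref{embd-ssect}; the isotropic/anisotropic split in \eqref{4.8d} and \eqref{4.10d} is then inherited verbatim from \eqref{1.25}.
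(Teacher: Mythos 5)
Your overall strategy is the paper's own: reduce one factor by the embeddings of Section~\ref{embd-ssect} until Theorem~\ref{basic-thm} applies, post-compose with Sobolev embeddings, and intersect over $r$; your treatment of $\pi_1$ in the $F$-case and of all the open (non-endpoint) ranges is correct. The gaps are at the borderline inclusions, and they are genuine. The clearest one is your $\pi_2$ bypass: routing through \eqref{1.19''} and \eqref{3.16} when $q_2<p_2$ lands you in a \emph{Besov} space $B^{\Mi,s_2}_{p_2,\tilde q_2}$ with $\fracci1{\tilde q_2}=\fracci1{\max(p_0,q_0)}+\fracci1{\max(p_1,q_1)}$, and to reach the endpoint $\fracc{|\M|}{r}=(\fracc{|\M|}{p_1}+\fracc{|\M|}{p_0}-s_0)_+$ of \eqref{4.9} --- which is asserted with ``$\le$'' and arbitrary $t$ --- you must then cross back into the $F$-scale at \emph{equal} differential dimension. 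That crossing is a Franke--Jawerth embedding \eqref{1.21} and needs $\tilde q_2<r$ (or $\tilde q_2\le r$ when $\M=(1,\dots,1)$), which can fail: take $p_0=p_1=2$, $s_0=s_1=\tfrac{3|\M|}{10}$, $q_0=\tfrac1{10}$, $q_1=\infty$; the hypothesis holds, \eqref{3.17} is not directly applicable since $q_2=\tfrac1{10}<p_2=1$, and one gets $\tilde q_2=2$ while the endpoint has $\fracc{|\M|}{r}=\tfrac{7|\M|}{10}$, i.e.\ $r=\tfrac{10}{7}<\tilde q_2$. The paper's device is different and avoids this: enlarge $q_0,q_1$ to $\infty$ \emph{inside the $F$-scale}, so that $\min(p_2,q_2)=p_2$ and \eqref{3.17} applies under the stated hypothesis, and then descend by \eqref{1.20'}, which carries no sum-exponent condition even at its borderline.

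Two further points. You derive the Besov endpoint alternatives \eqref{4.8a}--\eqref{4.8d} from \eqref{1.25}, but \eqref{4.8a} and \eqref{4.8b} actually rest on the $L_\infty$-embedding \eqref{1.22} (note \eqref{1.25} requires $s_0<\fracc{|\M|}{p_0}$), and --- more seriously --- in \eqref{4.8c}/\eqref{4.8d} one can have $t_0<1$: with $n=1$, $\M=(1)$, $p_0=\tfrac12$, $s_0=s_1=\tfrac9{10}$, $p_1=10$ the hypothesis holds and $t_0=\tfrac{10}{11}$. There \eqref{1.25} is void (its standing assumption $|\M|(\fracc1{p_0}-1)_+\le s_0$ amounts precisely to $t_0\ge1$), and a reduction to $L_{t_0}$ would be useless anyway because \eqref{3.8} requires its first exponent to lie in $[1,\infty]$. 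This is exactly why the paper invokes \eqref{1.21}: one embeds $B^{\Mi,s_0}_{p_0,q_0}\hookrightarrow F^{\Mi,0}_{t_0,1}$ by Franke--Jawerth and then applies \eqref{3.10}, and it is the sum-exponent conditions of \eqref{1.21} that generate \eqref{4.8c}/\eqref{4.8d} (likewise \eqref{4.10c}/\eqref{4.10d}). Finally, invoking the $\pi_3$-analogue of \eqref{3.19} ``directly'' for $s_1\le0$ breaks down at $s_1=0$, because the last statement of Theorem~\ref{basic-thm} requires $s_1<0$ strictly; the paper covers $s_1=0$ by first embedding $F^{\Mi,0}_{p_1,q_1}$ into the spaces $F^{\Mi,-s_0/2}_{r,q_1}$ of negative smoothness (trading integrability for smoothness) and only then running the $s_1<0$ argument. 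Each gap is repairable, but in every case the repair is precisely the ingredient of the paper's proof that your proposal replaces.
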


\begin{proof} Suppose 
$s_2>|\M|(\fracc1{\min(p_2,q_2)}-1)_+$. From \eqref{3.17} and 
$F^{\Mi,s_2}_{p_2,q_2}\hookrightarrow F^{\Mi,s_1}_{p_2,t}$,
it is inferred that
$\pi_2(F^{\Mi,s_0}_{p_0,q_0}\oplus F^{\Mi,s_1}_{p_1,q_1})\subset
F^{\Mi,s_1}_{r,t}$ holds for
$\fracc{|\M|}r=\fracc{|\M|}{p_0}+\fracc{|\M|}{p_1}$ and any $t$. Using
Sobolev embeddings $F^{\Mi,s_2}_{p_2,q_2}\hookrightarrow
F^{\Mi,s_1}_{r,t}$ it is not only required that
$\fracc{|\M|}r\ge\fracc{|\M|}{p_1}+\fracc{|\M|}{p_0}-s_0$, 
also $\fracc{|\M|}r>0$ must hold. Then \eqref{1.26} gives the intermediate
values.\,---\,Observe that when $s_2\not>|\M|(\fracc1{q_2}-1)$ one can 
consider $\pi_2$ on the larger space $F^{\Mi,s_0}_{p_0,\infty}\oplus
F^{\Mi,s_1}_{p_1,\infty}$; the embedding procedure above
gives the same result, eventually.

To prove \eqref{4.8}, one can  combine $F^{\Mi,s_0}_{p_0,q_0}
\hookrightarrow L_\infty\cap F^{\Mi,0}_{p_0,1}$ (when this holds)
with \eqref{3.9} and  \eqref{3.11}, and it is seen that even 
$\fracc{|\M|}r=\fracc{|\M|}{p_1}$ is possible. For $s_0\le
\fracc{|\M|}{p_0}$ use of \eqref{3.11} together with \eqref{1.20'} gives 
the lower bound $\fracc{|\M|}r=\fracc{|\M|}{p_1}+\fracc{|\M|}{p_0}-s_0$, except
when $s_0=\fracc{|\M|}{p_0}$ where only `$<$' is obtained.

When $F^{\Mi,s_1}_{p_1,q_1}\hookrightarrow L_\infty$ application of
the $\pi_3$ version of
\eqref{3.9} and \eqref{3.11} gives $\pi_3(F^{\Mi,s_0}_{p_0,q_0}\oplus 
F^{\Mi,s_1}_{p_1,q_1})\subset F^{\Mi,s_0}_{r,q_0}$ for $\fracc{|\M|}{p_0}
\le\fracc{|\M|}r\le\fracc{|\M|}p$. From $s_0\ge s_1$ it follows that
\eqref{4.10} holds even with `$<$' replaced by `$\le$'. For
$0<s_1<\fracc{|\M|}{p_1}$ there is an inclusion
$\pi_3(F^{\Mi,s_0}_{p_0,q_0}\oplus F^{\Mi,s_1}_{p_1,q_1})\subset 
F^{\Mi,s_0}_{r,q_0}$ for $\fracc{|\M|}r=\fracc{|\M|}{p_0}+
\fracc{|\M|}{p_1}-s_1$. Hence \eqref{4.10} holds with `$\le$' for 
$0<s_1<\fracc{|\M|}{p_1}$ and with `$<$' for $s_1=\fracc{|\M|}{p_1}$. 
When $s_1<0$ the last statement in Theorem~\ref{basic-thm} gives that
 \begin{align}
 F^{\Mi,s_0}_{p_0,q_0}\oplus F^{\Mi,s_1}_{p_1,q_1}&\xrightarrow{\;\pi_3\;} 
                               F^{\Mi,s_0+s_1}_{p,\infty}
 \notag \\
 &\hookrightarrow \bigcap\bigl\{\,F^{\Mi,s_1}_{r,t}\bigm| (\fracc{|\M|}{p_0}
   +\fracc{|\M|}{p_1}-s_0)_+\le\fracc{|\M|}{r}\le\fracc{|\M|}{p},
     \  0<t\le\infty\,\bigr\}.
 \label{4.13}
 \end{align}
The inclusion
$F^{\Mi,0}_{p_1,q_1}\hookrightarrow \cap\{\,F^{\Mi,-\fracc{s_0}2}_{r,q_1}
\mid\fracc{|\M|}{p_1}-\tfrac{s_0}{2}\le\fracc{|\M|}{r}\le
\fracc{|\M|}{p_1}\,\}$ combined with the techniques for $s_1<0$ can
be used to show that $\pi_3(F^{\Mi,s_0}_{p_0,q_0}\oplus
F^{\Mi,0}_{p_1,q_1})$ is contained in the space on the right hand side
of \eqref{4.13} for $s_1=0$.

\smallskip

Formulae \eqref{4.8'}--\eqref{4.10'} are proved in the same manner: Since
\eqref{3.16} holds under fewer conditions 
than \eqref{3.17}, \eqref{4.9} carry over to
the Besov case with the modification that $t\ge q_2$ is necessary 
when $\fracc{|\M|}{r}=\fracc{|\M|}{p_1}+
\fracc{|\M|}{p_0}-s_0\ge0$, cf.~\eqref{1.20}. 

\eqref{4.8'} and \eqref{4.8a}--\eqref{4.8d}
are proved in the same way as \eqref{4.8} 
using \eqref{3.8}, \eqref{3.10} and \eqref{1.21}. The properties
\eqref{4.10'} and \eqref{4.10a}--\eqref{4.10e} follow from
Theorem~\ref{basic-thm} and \eqref{1.21} analogously to \eqref{4.10}.
\end{proof}

\section{Sufficient conditions for multiplication} 
\label{suff-sect}
Before we establish the sufficient conditions, an overview of the
results in Sections~\ref{ncss-sect} and \ref{suff-sect} pertinent 
to the questions (II) and (III) is given. 
Recall that $A_j$, for each $j=0$, 1 and 2, denotes either
$B^{\Mi,s_j}_{p_j,q_j}$ or $F^{\Mi,s_j}_{p_j,q_j}$.
(In this section all spaces are over $\Rn$, so for simplicity $\Rn$ is
omitted here.)

\bigskip

Concerning question (II) in Section~\ref{intr-sect}, it is 
{\em necessary \/} for $\pi\colon A_0\oplus A_1\to A_2$ to be bounded 
that $(s_j,p_j,q_j)_{j=0,1}$ satisfy (1) and (2) in \eqref{2.14},
i.e., it is necessary that
 \begin{equation}
 s_0+s_1\,\ge \, |\M|(\fracc1{p_0}+\fracc1{p_1}-1)_+.
 \label{4.1}
 \end{equation}
Here one can denote by $\Dm(\pi,BB)$ the domain of parameters
$(s_j,p_j,q_j)_{j=0,1}$ such that there is
continuity of $\pi\colon B^{\Mi,s_0}_{p_0,q_0}\oplus
B^{\Mi,s_1}_{p_1,q_1}\to A_2$ for some Besov or Triebel--Lizorkin
space $A_2$; in a similar way one can define 
domains $\Dm(\pi,BF)$ etc. (The notation should remind one that these
domains consists of {\em numbers\/} and not of vectors.)

In this section we shall show that in any of the
$\bigdot\bigdot\bigdot$ cases the
{\em sharp\/} inequality in \eqref{4.1} is {\em sufficient\/} for the
continuity of $\pi\colon A_0\oplus A_1\to A_2$, and we suggest to write
 \begin{alignat}{2}
 (s_j,p_j,q_j)_{j=0,1}&\in\Dm(\pi)&\quad\text{ if}\quad
 s_0+s_1\,&> \,|\M|(\fracc1{p_0}+\fracc1{p_1}-1)_+,
 \label{4.2}\\
 (s_j,p_j,q_j)_{j=0,1}&\in\uDm(\pi)&\quad\text{ if}\quad
 s_0+s_1\,&\ge \, |\M|(\fracc1{p_0}+\fracc1{p_1}-1)_+
 \label{4.3}
 \end{alignat}
(with a possible omission of ``$j=0,1$''). Here $\Dm(\pi)\subset
\Dm(\pi,\bigdot\bigdot)\subset\uDm(\pi)$
(cf.~the abovementioned sufficiency and \eqref{4.1}). In these terms $(1')$ 
and $(2')$ in \eqref{2.15} state that $\Dm(\pi,BB)\ne\uDm(\pi)$ respectively 
$\Dm(\pi,\bigdot\bigdot)\ne\uDm(\pi)$.

We say that $(s_j,p_j,q_j)_{j=0,1}\in\Dm(\pi)$ is a pair of {\em
generic\/} parameters.

Moreover, when $\max(s_0,s_1)>0$ a complete characterisation of
$\Dm(\pi,BB)$ and $\Dm(\pi,FF)$ is found, cf.~Corollary~\ref{dscr-cor} below. 

\bigskip

When $\pi$ is defined on $A_0\oplus A_1$ it is a
question for which $A_2$ there is continuity of $\pi\colon A_0\oplus A_1\to
A_2$, cf.~(III). To have a convenient notation for this 
we define the set 
 \begin{equation}
  \begin{aligned}
 \Pl(A_0,A_1)&=\bigl\{\,(t,r,o)\bigm|\pi\colon A_0\oplus A_1\to A_2
  \quad\text{is bounded}
 \\[-2\jot]
 &\qquad\qquad\qquad\text{when $A_2$ has the parameter $(t,r,o)$}\,\bigr\}.
 \end{aligned}
 \label{P3}
 \end{equation}
$\Pl(A_0,A_1)$ refers to two specified spaces on which
$\pi(\cdot,\cdot)$ makes sense, cf.~(II). To distinguish between the 
various $\bigdot\bigdot B$ and $\bigdot\bigdot F$ cases one could
write $\Pl(A_0,A_1;B)$  and $\Pl(A_0,A_1;F)$,
respectively, but usually it is unnecessary.

For each $A_0\oplus A_1$
with $(s_j,p_j,q_j)\in\Dm(\pi,\bigdot\bigdot)$ one may wish to
{\em determine\/} the set $\Pl(A_0,A_1)$, and this will be done below for
$(s_j,p_j,q_j)\in\Dm(\pi)$ in the isotropic $FFF$ cases, whereas in the
$BBB$ cases a certain `vertex' question remains open.

However, in the general case much information on $\Pl(A_0,A_1)$ is
contained in Theorem~\ref{ncss-thm} already. Indeed, if
$(s_2,p_2,q_2)\in\Pl(A_0,A_1)$, then (3)--(7) in
\eqref{2.14} hold\,---\,regardless of which of the
$\bigdot\bigdot\bigdot$ cases that
are under consideration. The set that contains
$(s_2,\fracc{|\M|}{p_2})$ for all $(s_2,p_2,q_2)$ satisfying (3)--(6)
is pictured in Figure~\ref{A2-fig}.

\begin{figure}[ht]
\hfil
\setlength{\unitlength}{0.0125in}
\begin{picture}(406,316)(0,0)
\path(163,180)(280,180)(280,20)
\path(200,180)(280,140)
\path(280,100)(224,128)      %her laves hul til \Pl(A_0,A_1)
\path(182,149)(148,166)
\path(280,60)(120,140)
\path(240,40)(95,115)
\path(200,20)(68,86)
\path(120,20)(40,60)
\path(60,10)(13,33.5)
\dottedline{5}(111,180)(160,180)
\dottedline{5}(177,217)(125,165)
\dashline{4.000}(157,177)(10,30)
\thicklines
\path(7,100)(370,100)
\path(362.000,98.000)(370.000,100.000)(362.000,102.000)
\path(10,0)(10,290)
\path(12.000,282.000)(10.000,290.000)(8.000,282.000)
\put(32,135){\makebox(0,0)[lb]{\raisebox{0pt}[0pt][0pt]
{\shortstack[l]{{$\scriptstyle{s-\fracci{|\Mi|}p=}$}}}}}
\put(32,122){\makebox(0,0)[lb]{\raisebox{0pt}[0pt][0pt]
{\shortstack[l]{{$\scriptstyle{\min^+(s_j-\fracci{|\Mi|}{p_j})}$}}}}}
\put(195,185){\makebox(0,0)[lb]{\raisebox{0pt}[0pt][0pt]
{\shortstack[l]{{$\scriptstyle{s=\min(s_0,s_1)}$}}}}}
\put(285,170){\makebox(0,0)[lb]{\raisebox{0pt}[0pt][0pt]
{\shortstack[l]{{$\scriptstyle\fracpi=\fracci1{p_0}+\fracci1{p_1}$}}}}}
\put(177,135){\makebox(0,0)[lb]{\raisebox{0pt}[0pt][0pt]
{\shortstack[l]{{$\Pl(A_0,A_1)$}}}}}
\put(7,295){\makebox(0,0)[lb]{\raisebox{0pt}[0pt][0pt]
{\shortstack[l]{{$s$}}}}}
\put(0,97){\makebox(0,0)[lb]{\raisebox{0pt}[0pt][0pt]
{\shortstack[l]{{$0$}}}}}
\put(158,178){\makebox(0,0)[lb]{\raisebox{0pt}[0pt][0pt]
{\shortstack[l]{{$\scriptstyle\circ$}}}}}
\put(105,178.5){\makebox(0,0)[lb]{\raisebox{0pt}[0pt][0pt]
{\shortstack[l]{{$\scriptscriptstyle\times$}}}}}
\put(178,219.7){\makebox(0,0)[lb]{\raisebox{0pt}[0pt][0pt]
{\shortstack[l]{{$\scriptscriptstyle\times$}}}}}
\put(95,192){\makebox(0,0)[lb]{\raisebox{0pt}[0pt][0pt]
{\shortstack[l]{{$A_1$}}}}}
\put(375,97){\makebox(0,0)[lb]{\raisebox{0pt}[0pt][0pt]
{\shortstack[l]{{$\fracc{|\M|}p$}}}}}
\put(172,232){\makebox(0,0)[lb]{\raisebox{0pt}[0pt][0pt]
{\shortstack[l]{{$A_0$}}}}}
\end{picture}
\hfil
\caption[]{The set $\Pl(A_0,A_1)$ contains 
                  the possible $A_2$ spaces.\\ (`The gap' disappears 
  when $\min_{j=0,1}(s_j-\fracc{|\M|}{p_j})\ge 0$.)} 
\label{A2-fig}
\end{figure}
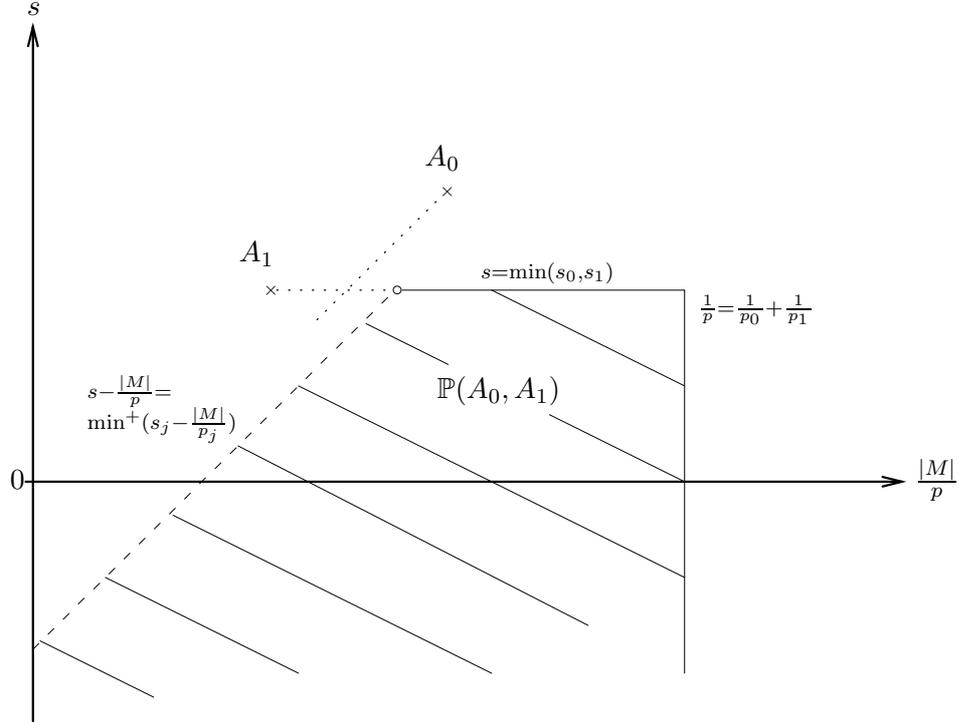

The dashed line in the figure, that corresponds to 
$s=\fracc{|\M|}{p}+\min_{j=0,1}^+(s_j-\fracc{|\M|}{p_j})$,
cf.~\eqref{2.30} for the notation, should remind one that it is not always
possible to obtain $(s_2,\fracc{|\M|}{p_2})$ here for a parameter
$(s_2,p_2,q_2)\in \Pl(A_0,A_1)$. In fact (7) in
\eqref{2.14} is a necessary condition on $A_0\oplus A_1$ for this.
The ``$\circ$'' at the left vertex has been used to indicate that
this point is subject to two set of necessary conditions
(the open question for the generic $BBB$ cases also concerns this point).

For all the $\bigdot\bigdot\bigdot$ cases it is found when
$(s_j,p_j,q_j)\in \Dm(\pi)$ that $\Pl(A_0,A_1)$ contains all
$(s_2,p_2,q_2)$ with $(s_2,\fracc{|\M|}{p_2})$ in the interior of the
set in Figure~\ref{A2-fig} and $q_2\in\,]0,\infty]$.

\bigskip

The possibility of having $(s_2,p_2,q_2)\in\Pl(A_0,A_1)$ with
$(s_2,\fracc{|\M|}{p_2})$ on the boundary is restricted somewhat by
the necessary conditions in (7), $(3')$, $(5')$ and $(6')$. 

In the following
it is verified that (7) and $(3')$ are both necessary and sufficient
for this in the generic isotropic $FFF$ cases. 

For the $BBB$ cases the four conditions (7), $(3')$, $(5')$ and $(6')$
are also necessary and sufficient in this respect, {\em except\/} at
``$\circ$'' in Figure~\ref{A2-fig}. At this particular point it is in
general further required that $q_j<|\M|(\fracc{|\M|}{p_j}-s_j)^{-1}$
for $j=0$ and or $1$ when $s_j<\fracc{|\M|}{p_j}$, 
cf.~Theorem~\ref{BBB-thm}. The necessity of these
requirements constitute the abovementioned open `vertex' question for
the generic $BBB$ cases.

\subsection{The $FFF$ cases} \label{FFF-ssect} 
In the rest of this section we assume that $s_0\ge s_1$ as we may
because $\pi$ is commutative on $\cal S'\times\cal S'$.
Despite Definition~\ref{pi-defn} above of $\pi(\cdot,\cdot)$ we shall not pay
attention to the independence of $\psi$ here, for this will be
obtained afterwards in Section~\ref{indp-ssect} below. 

When $s_0\ge s_1$ the notation
 \begin{equation}
 \begin{gathered}
 \fracp=\fracc{1}{p_0}+\fracc{1}{p_1},\qquad
 q=\max(q_0\kd{s_0=s_1},q_1)
 \\
 \tfrac{|\M|}{p^*_1}=\fracc{|\M|}{p_1}+(s_0-\fracc{|\M|}{p_0})_-
   +(s_1-\fracc{|\M|}{p_1}-(s_0-\fracc{|\M|}{p_0})_+)_+
 \end{gathered}
 \label{4.4'}
 \end{equation}
will be useful in this and the following subsection. 

The next result concerns $F^{\Mi,s_0}_{p_0,q_0}\oplus F^{\Mi,s_1}_{p_1,q_1}$
with $(s_j,p_j,q_j)_{j=0,1}\in\Dm(\pi)$:

\begin{thm} \label{FFF-thm} 
Let $\M$ and the numbers
$s_0,s_1\in\R$, $p_0,p_1\in\,]0,\infty[\,$ and
$q_0,q_1\in\,]0,\infty]$ be given and suppose that
$s_0+s_1>|\M|\max(0,\fracc1{p_0}+\fracc1{p_1}-1)$.

For $s_0\ge s_1$ and with $p$, $p^*_1$ and $q$ given by \eqref{4.4'},
the product $\pi(\cdot,\cdot)$ is continuous
 \begin{equation}
 F^{\Mi,s_0}_{p_0,q_0}\oplus F^{\Mi,s_1}_{p_1,q_1}
 \xrightarrow{\;\pi(\cdot,\cdot)\;}
 \bigcap\bigl\{\,F^{\Mi,s_1}_{r,q}\bigm| \tfrac{|\M|}{p^*_1}
         <\fracc{|\M|}{r}\le \fracc{|\M|}{p} \,\bigr\}.
 \label{4.4}
 \end{equation}
Furthermore one can include $\fracc{|\M|}{r}=\frac{|\M|}{p^*_1}$ 
when $F^{\Mi,s_0}_{p_0,q_0}\hookrightarrow L_\infty$ holds in addition
to one of the following conditions:
 \begin{equation}
 \alignedat2
 &\text{\rom{(1a)}}\quad s_1-\fracc{|\M|}{p_1}&> s_0-\fracc{|\M|}{p_0},&
  \\
 &\text{\rom{(1b)}}\quad s_1-\fracc{|\M|}{p_1}&\le s_0-\fracc{|\M|}{p_0},
    &\ \text{and $F^{\Mi,s_1}_{p_1,q_1}\hookrightarrow L_\infty$ holds}
 \\
 &&&\hphantom{\text{and $F^{\Mi,s_1}_{p_1,q_1}$}}
    \smash[b]{\text{if $s_1-\fracc{|\M|}{p_1}=s_0-\fracc{|\M|}{p_0}=0$}}, 
 \endalignedat
 \label{4.6}
 \end{equation}
and under each of the conditions
 \begin{equation}
 \alignedat2
 &\text{\rom{(2a)}}\quad s_1&>\fracc{|\M|}{p_1},&
 \\
 &\text{\rom{(2b)}}\quad s_1&<\fracc{|\M|}{p_1},&\ \text{ and 
    $F^{\Mi,s_0}_{p_0,q_0}\hookrightarrow L_\infty$ holds if $s_0=
        \fracc{|\M|}{p_0}$},
 \\
 &\text{\rom{(2c)}}\quad s_1&=\fracc{|\M|}{p_1} &\ \text{ and 
          $F^{\Mi,s_1}_{p_1,q_1}\hookrightarrow L_\infty$, and    
        $F^{\Mi,s_0}_{p_0,q_0}\hookrightarrow L_\infty$ holds if $s_0=
        \fracc{|\M|}{p_0}$}, 
 \endalignedat
 \label{4.7}
 \end{equation}
the value $\fracc{|\M|}{r}=\frac{|\M|}{p^*_1}$ can be included when
$0<s_0\le\fracc{|\M|}{p_0}$. 
\end{thm}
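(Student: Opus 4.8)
The plan is to obtain \eqref{4.4} by para-multiplication: by Lemma~\ref{pi-lem} one has $\pi_\psi=\pi_1+\pi_2+\pi_3$ as soon as the three series converge in $\cal D'(\Rn)$, and this convergence is furnished by Corollary~\ref{basic-cor} (ultimately by Yamazaki's Theorems~\ref{Y1-thm} and~\ref{Y2-thm}), whose target spaces sit inside $\cal S'\hookrightarrow\cal D'$. The generic hypothesis $s_0+s_1>|\M|(\fracc1{p_0}+\fracc1{p_1}-1)_+$, i.e.\ $s_2>|\M|(\fracc1{p_2}-1)_+$, is exactly what makes the $\pi_2$-part \eqref{4.9} applicable (if the $q_2$-condition in \eqref{3.17} fails one first enlarges $q_0,q_1$ to $\infty$, as in the proof of that corollary). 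Thus the whole argument reduces to combining \eqref{4.8}, \eqref{4.9} and \eqref{4.10}.

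First I would record the three left endpoints $L_1=\fracc{|\M|}{p_1}+(s_0-\fracc{|\M|}{p_0})_-$, $L_2=(\fracc{|\M|}{p_1}+\fracc{|\M|}{p_0}-s_0)_+$ and $L_3=(\fracc{|\M|}{p_1}+\fracc{|\M|}{p_0}-s_0+(s_1-\fracc{|\M|}{p_1})_+)_+$ read off from \eqref{4.8}--\eqref{4.10}, each $\pi_j$ mapping $F^{\Mi,s_0}_{p_0,q_0}\oplus F^{\Mi,s_1}_{p_1,q_1}$ boundedly into $\bigcap\{F^{\Mi,s_1}_{r,\cdot}\mid L_j<\fracc{|\M|}{r}\le\fracc{|\M|}{p}\}$. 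The key algebraic step is the identity $\fracc{|\M|}{p^*_1}=\max(L_1,L_2,L_3)$, which I would verify by a short case split on the signs of $a:=s_0-\fracc{|\M|}{p_0}$ and $b:=s_1-\fracc{|\M|}{p_1}$: for $a\ge0$ one has $\fracc{|\M|}{p^*_1}=\fracc{|\M|}{p_1}+(b-a)_+$, attained by $L_1$ when $b\le a$ and by $L_3$ when $b>a$; for $a\le0$ one has $\fracc{|\M|}{p^*_1}=\fracc{|\M|}{p_1}-a+b_+=L_3\ge L_1=L_2$. Hence the common open $r$-range is precisely $\fracc{|\M|}{p^*_1}<\fracc{|\M|}{r}\le\fracc{|\M|}{p}$. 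For such $r$ the $\pi_1$-image lies in $F^{\Mi,s_1}_{r,q_1}$, the $\pi_3$-image in $F^{\Mi,s_1}_{r,q_0\kd{s_0=s_1}}$, and the $\pi_2$-image in $F^{\Mi,s_1}_{r,t}$ for every $t$; since $q=\max(q_0\kd{s_0=s_1},q_1)$, the simple embeddings \eqref{1.19} place all three summands in $F^{\Mi,s_1}_{r,q}$, proving \eqref{4.4}.

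It then remains to include the endpoint $\fracc{|\M|}{r}=\fracc{|\M|}{p^*_1}$, for which I would invoke the refined closed-endpoint parts of Corollary~\ref{basic-cor}. Here $\pi_2$ never obstructs, its lower bound in \eqref{4.9} already being closed; so one only pushes the binding terms among $\pi_1,\pi_3$ (those with $L_j=\fracc{|\M|}{p^*_1}$) to their endpoints, using the same case split and the $L_\infty$-criterion \eqref{1.23}. In the regime $a\ge0$, where $F^{\Mi,s_0}_{p_0,q_0}\hookrightarrow L_\infty$ is available, condition \rom{(1a)} $b>a$ makes $\pi_3$ the sole binding term with $s_1>\fracc{|\M|}{p_1}$, whose endpoint is then free, while \rom{(1b)} $b\le a$ makes $\pi_1$ bind\,---\,its endpoint included via $F^{\Mi,s_0}_{p_0,q_0}\hookrightarrow L_\infty$\,---\,and $\pi_3$ binds as well only in the coincidence $a=b=0$, where it needs exactly the extra hypothesis $F^{\Mi,s_1}_{p_1,q_1}\hookrightarrow L_\infty$ stated in \rom{(1b)}. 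In the regime $0<s_0\le\fracc{|\M|}{p_0}$ (i.e.\ $a\le0$) the term $\pi_3$ always binds, with $\pi_1,\pi_2$ binding too precisely when $b\le0$; conditions \rom{(2a)}--\rom{(2c)} then supply exactly the refinement hypotheses for $\pi_3$ (free for $s_1\ne\fracc{|\M|}{p_1}$, needing $F^{\Mi,s_1}_{p_1,q_1}\hookrightarrow L_\infty$ when $s_1=\fracc{|\M|}{p_1}$, as in \rom{(2c)}) and for $\pi_1$ (free for $s_0<\fracc{|\M|}{p_0}$, needing $F^{\Mi,s_0}_{p_0,q_0}\hookrightarrow L_\infty$ when $s_0=\fracc{|\M|}{p_0}$, as recorded in \rom{(2b)} and \rom{(2c)}).

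The main obstacle I anticipate is this last step: the bookkeeping needed to confirm that \rom{(1a)}--\rom{(2c)} match the refinement hypotheses of Corollary~\ref{basic-cor} term by term. The two simplifications that keep it tractable are that $\pi_2$ is harmless (closed lower endpoint) and that the only delicate coincidences are $s_0=\fracc{|\M|}{p_0}$ (forcing the $L_\infty$-condition on the first factor, for $\pi_1$) and $s_1=\fracc{|\M|}{p_1}$ (forcing it on the second factor, for $\pi_3$), which are exactly the provisos appearing in \rom{(1b)}, \rom{(2b)} and \rom{(2c)}.
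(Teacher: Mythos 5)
Your proof is correct and takes essentially the same route as the paper's own: both arguments rest entirely on Corollary~\ref{basic-cor}, identify $\tfrac{|\M|}{p^*_1}$ as the largest of the three lower bounds in \eqref{4.8}--\eqref{4.10} (your max identity is exactly the content of the paper's inequalities \eqref{4.11} and \eqref{4.12}), and then match the endpoint refinements of the corollary to the conditions (1a)--(2c) case by case. One harmless slip: in case (1b) the $\pi_3$-bound also binds when $s_1-\fracc{|\M|}{p_1}=s_0-\fracc{|\M|}{p_0}>0$, not only in the coincidence $s_0=\fracc{|\M|}{p_0}$, $s_1=\fracc{|\M|}{p_1}$, but there $s_1>\fracc{|\M|}{p_1}$, so the refinement rule you yourself quote for \eqref{4.10} includes that endpoint without any extra hypothesis, and your conclusion stands.
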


\begin{proof} 
It is clear from \eqref{4.8}--\eqref{4.10} that $A_2$ can be obtained
with sum-exponent $q_2=\min(q_0\kd{s_0=s_1},q_1)$. Hence we focus on the
$r$-values. When $0<s_0\le\fracc{|\M|}{p_0}$  the lower bounds 
for $\fracc{|\M|}{r}$ in \eqref{4.8} and \eqref{4.9} 
are equal to $\fracc{|\M|}{p_1}+\fracc{|\M|}{p_0}-s_0$, and they satisfy
the inequality
 \begin{equation}
 0\,<\,\fracc{|\M|}{p_1}+\fracc{|\M|}{p_0}-s_0\le
 \fracc{|\M|}{p_1}+\fracc{|\M|}{p_0}-s_0+(s_1-\fracc{|\M|}{p_1})_+,
 \label{4.11}
 \end{equation}
so the continuity of $\pi_1,\pi_2$ and $\pi_3$ follows
under the assumptions in \eqref{4.4} and \eqref{4.7}.

For the case $F^{\Mi,s_0}_{p_0,q_0}\hookrightarrow L_\infty$, note that 
 \begin{equation}
 (\fracc{|\M|}{p_1}+\fracc{|\M|}{p_0}-s_0)_+\le\fracc{|\M|}{p_1}
 \le\fracc{|\M|}{p_1}+(s_1-\fracc{|\M|}{p_1}-(s_0-\fracc{|\M|}{p_0}))_+,
 \label{4.12}
 \end{equation}
and that the quantities are the lower bounds for $\fracc{|\M|}{r}$ in
\eqref{4.9}, \eqref{4.8} and \eqref{4.4} respectively. When 
$s_1-\fracc{|\M|}{p_1}-(s_0-\fracc{|\M|}{p_0})>0$ the right hand side
of \eqref{4.12} is equal to the lower bound of $\fracc{|\M|}r$ in 
\eqref{4.10}, so this case of \eqref{4.4} and (1a) is proved. When 
$s_1-\fracc{|\M|}{p_1}-(s_0-\fracc{|\M|}{p_0})\le0$, $\fracc{|\M|}{p_1}$
is the largest of the lower bounds of $\fracc{|\M|}r$. For
$s_0>\fracc{|\M|}{p_0}$ it is seen that the lower bound of
$\fracc{|\M|}r$ in \eqref{4.10} is $\le\fracc{|\M|}{p_1}$ with
equality only if $s_1-\fracc{|\M|}{p_1}=s_0-\fracc{|\M|}{p_0}$. But
then, since $s_0>\fracc{|\M|}{p_0}$, the $L_\infty$-condition in
\eqref{4.10} ff.\ is satisfied. For $s_0=\fracc{|\M|}{p_0}$ the lower
bound in \eqref{4.10} equals $\fracc{|\M|}{p_1}$, but it can be
included since $F^{\Mi,s_1}_{p_1,q_1}\hookrightarrow L_\infty$ if
$s_1=\fracc{|\M|}{p_1}$.
\end{proof} 

\begin{rem} \label{FFFopt-rem}
Concerning the spaces on the right hand side of \eqref{4.4} 
and concerning \eqref{4.6} and \eqref{4.7} it
should be observed explicitly that 
 \begin{itemize}
  \item $s_1$ is largest possible index that can occur there by (3) 
        in Theorem~\ref{ncss-thm} and the assumption $s_0\ge s_1$;
  \item the integral-exponent $r$ must satisfy $\fracc{|\M|}r\le
        \fracc{|\M|}p$ according to (4) in \eqref{2.14}, and for $s_2=s_1$
        it follows from (5) and (6) there that $\fracc{|\M|}r
        \ge\frac{|\M|}{p^*_1}$, cf.~\eqref{2.31};
  \item for $s_2=s_1$ the sum-exponent  $q_2=q$ is
        best possible according to $(3')$,
  \item for the cases of (1b), (2b) and (2c) with
        $s_j=\fracc{|\M|}{p_j}$, condition (7) in \eqref{2.14} gives that
        $p_j\le1$ if $\fracc{|\M|}{r}=\frac{|\M|}{p^*_1}$ is to be 
        obtained. But for $p_j\le1$ one has  
        $F^{\Mi,|\Mi|/p_j}_{p_j,q_j}\hookrightarrow L_\infty$
        except for $\M\ne(1,\dots,1)$, cf.~Remark~\ref{embd-rem}. Hence the
        $L_\infty$-conditions in \eqref{4.6} and \eqref{4.7} are 
        optimal for the isotropic $FFF$ cases.
 \end{itemize}
\end{rem}

Also every $A_2$ space with $s_2<s_1$ can be obtained from
Theorem \ref{FFF-thm}: When the result in the theorem is combined with Sobolev
embeddings, it is seen that $A_2$ can be obtained with 
$(s_2,\fracc{|\M|}{p_2})$ at (or arbitrarily close to) each point on 
the line characterised by 
$s-\fracc{|\M|}p=\min^+_{j=0,1}(s_j-\fracc{|\M|}{p_j})$, and simple
embeddings gives that the set $A_2$ ranges through is unbounded below.
Figure~\ref{embd-fig} illustrates this. 

According to (4)--(7) in \eqref{2.14} this procedure gives essentially
all the possible cases with $s_2<s_1$; only when $\M\ne(1,\dots,1)$ 
and the relevant $(s_j,p_j,q_j)$ equals
$(|\M|,1,q)$ for $1<q\le\infty$ is it open whether some of the
$L_\infty$-conditions stemming from \eqref{4.6} and \eqref{4.7} above can be
relaxed for the $A_2$ spaces with $s_2<s_1$. For $\M=(1,\dots,1)$ they cannot.

\begin{figure}[htb]
\hfil
\setlength{\unitlength}{0.0125in}
\begin{picture}(396,276)(0,0)
\path(270,180)(270,20)
\dashline{4.000}(147,177)(10,40)
\path(153,180)(270,180)
\path(240,175)(240,60)
\path(238.000,68.000)(240.000,60.000)(242.000,68.000)
\path(165,175)(120,130)
\path(124.243,137.071)(120.000,130.000)(127.071,134.243)
\path(120,125)(120,40)
\path(118.000,48.000)(120.000,40.000)(122.000,48.000)
\thicklines
\path(7,80)(360,80)
\path(352.000,78.000)(360.000,80.000)(352.000,82.000)
\path(10,0)(10,250)
\path(12.000,242.000)(10.000,250.000)(8.000,242.000)
\put(7,255){\makebox(0,0)[lb]{\raisebox{0pt}[0pt][0pt]
{\shortstack[l]{{$s$}}}}}
\put(0,77){\makebox(0,0)[lb]{\raisebox{0pt}[0pt][0pt]
{\shortstack[l]{{$0$}}}}}
\put(375,77){\makebox(0,0)[lb]{\raisebox{0pt}[0pt][0pt]
{\shortstack[l]{{$\fracc{|\M|}p$}}}}}
\put(148,178){\makebox(0,0)[lb]{\raisebox{0pt}[0pt][0pt]
{\shortstack[l]{{$\scriptstyle\circ$}}}}}
\end{picture}
\hfil
\caption{Embeddings giving the structure of $\Pl(\bigdot,\bigdot)$} 
 \label{embd-fig}
\end{figure}

These additional results are summed up as follows:

\begin{thm} \label{FFF2-thm}
With assumptions as in Theorem~\ref{FFF-thm}, the product
$\pi(\cdot,\cdot)$ is bounded for each $s_2<s_1$ and $o\in\,]0,\infty]$
 \begin{equation}
 F^{\Mi,s_0}_{p_0,q_0}\oplus F^{\Mi,s_1}_{p_1,q_1}
 \xrightarrow{\;\pi(\cdot,\cdot)\;}
 \bigcap\bigl\{\,F^{\Mi,s_2}_{r,o}\bigm| 
(\tfrac{|\M|}{p^*_1}-(s_1-s_2))_+
         <\fracc{|\M|}{r}\le \fracc{|\M|}{p} \,\bigr\}.
 \label{4.4-2}
 \end{equation}
In addition, when $\frac{|\M|}{p^*_1}-s_1+s_2>0$, this value of 
$\fracc{|\M|}r$ may be included when
$F^{\Mi,s_0}_{p_0,q_0}\hookrightarrow L_\infty$ and 
$0<s_0\le\fracc{|\M|}{p_0}$ under each of the conditions in 
\eqref{4.6} and \eqref{4.7} respectively. 
\end{thm}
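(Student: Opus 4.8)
The plan is to obtain \eqref{4.4-2} directly from Theorem~\ref{FFF-thm} by composing the map furnished there with the embeddings of Section~\ref{embd-ssect}, the essential new ingredient being that $s_2<s_1$ makes $\varepsilon:=s_1-s_2$ strictly positive. Because the target in \eqref{4.4-2} is an intersection of spaces, it suffices, by the criterion for continuity into an intersection $\cap Y_j$ stated in Section~\ref{notation-sect}, to produce each single space $F^{\Mi,s_2}_{r,o}$ with $\fracc{|\M|}r$ in the prescribed range and with the prescribed $o$. I note at the outset that $s_0>0$ (since $s_0\ge s_1$ and $s_0+s_1>0$), from which one checks that $\frac{|\M|}{p^*_1}<\fracc{|\M|}p$ in all cases, so the range of Theorem~\ref{FFF-thm} is genuinely non-empty.

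First I would handle the values $\frac{|\M|}{p^*_1}<\fracc{|\M|}r\le\fracc{|\M|}p$. For these, Theorem~\ref{FFF-thm} already gives boundedness of $\pi(\cdot,\cdot)$ into $F^{\Mi,s_1}_{r,q}$, and since $\varepsilon>0$ the simple embedding \eqref{1.19'} yields $F^{\Mi,s_1}_{r,q}\hookrightarrow F^{\Mi,s_2}_{r,o}$ for the desired $o$; composition settles these $r$. Next I would treat the remaining interval $(\frac{|\M|}{p^*_1}-\varepsilon)_+<\fracc{|\M|}r\le\frac{|\M|}{p^*_1}$ by means of the Sobolev embedding \eqref{1.20'}. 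Given such an $r$, I would select an auxiliary exponent $r'$ with $\frac{|\M|}{p^*_1}<\fracc{|\M|}{r'}\le\min(\fracc{|\M|}r+\varepsilon,\fracc{|\M|}p)$; this interval is non-empty precisely because $\fracc{|\M|}r+\varepsilon>\frac{|\M|}{p^*_1}$ and $\frac{|\M|}{p^*_1}<\fracc{|\M|}p$. Then $\fracc{|\M|}{r'}>\fracc{|\M|}r$ and $s_1-\fracc{|\M|}{r'}\ge s_2-\fracc{|\M|}r$, so \eqref{1.20'} delivers $F^{\Mi,s_1}_{r',q}\hookrightarrow F^{\Mi,s_2}_{r,o}$ for any $o$, and composing with Theorem~\ref{FFF-thm} (valid at the interior value $r'$) gives the claim. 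The requirement $\fracc{|\M|}r>0$, needed for $F^{\Mi,s_2}_{r,o}$ to be an admissible Triebel--Lizorkin space, is exactly what produces the truncation $(\cdot)_+$ in the lower bound. These two steps together establish \eqref{4.4-2}.

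For the supplementary statement I would start from the \emph{endpoint} space rather than an interior one. Assuming $\frac{|\M|}{p^*_1}-\varepsilon>0$, that $F^{\Mi,s_0}_{p_0,q_0}\hookrightarrow L_\infty$, that $0<s_0\le\fracc{|\M|}{p_0}$, and one of the conditions in \eqref{4.6} or \eqref{4.7}, Theorem~\ref{FFF-thm} permits the endpoint value $\fracc{|\M|}{r^*}=\frac{|\M|}{p^*_1}$, so $\pi(\cdot,\cdot)$ is bounded into $F^{\Mi,s_1}_{r^*,q}$. Since $\varepsilon>0$, the target value $\fracc{|\M|}r=\frac{|\M|}{p^*_1}-\varepsilon$ is positive and strictly below $\fracc{|\M|}{r^*}$, and the smoothness/integrability budget balances exactly, $s_1-\fracc{|\M|}{r^*}=s_2-\fracc{|\M|}r$; hence the equality case of \eqref{1.20'} gives $F^{\Mi,s_1}_{r^*,q}\hookrightarrow F^{\Mi,s_2}_{r,o}$ for every $o$, and composition yields the asserted endpoint inclusion under the same $L_\infty$ hypotheses.

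I expect the main obstacle to be the bookkeeping at the lower end of the range: one must verify that for every admissible $r$ below $\frac{|\M|}{p^*_1}$ there is an $r'$ lying strictly inside the open range of Theorem~\ref{FFF-thm} from which \eqref{1.20'} applies, and that the formerly strict, $L_\infty$-sensitive lower endpoint of that theorem is genuinely absorbed into the interior here, so that \eqref{4.4-2} itself needs no $L_\infty$ hypothesis while only the single remaining endpoint does. Both facts hinge on $\varepsilon=s_1-s_2>0$, which simultaneously creates the slack $\fracc{|\M|}r+\varepsilon>\frac{|\M|}{p^*_1}$ needed to choose $r'$ and makes the final Sobolev step at the endpoint strict.
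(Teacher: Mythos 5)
Your proposal is correct and follows essentially the same route as the paper: Theorem~\ref{FFF-thm} combined with the simple embeddings \eqref{1.19'} for the interior values and the Sobolev embeddings \eqref{1.20'} (moving along the line $s-\fracc{|\M|}{p}=\min^+(s_j-\fracc{|\M|}{p_j})$, with equality at the endpoint) for the values below $\tfrac{|\M|}{p^*_1}$, which is precisely the argument the paper sketches in the paragraph and figure preceding the theorem. Your write-up merely makes explicit the choice of the auxiliary exponent $r'$ and the verification that $\tfrac{|\M|}{p^*_1}<\fracc{|\M|}{p}$, details the paper leaves to the reader.
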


Altogether it can be concluded for the isotropic $FFF$ cases,
where $\M=(1,\dots,1)$, that the set
$\Pl(F^{\Mi,s_0}_{p_0,q_0}, F^{\Mi,s_1}_{p_1,q_1})$
is completely described by Theorems~\ref{FFF-thm} and \ref{FFF2-thm} 
when $(s_j,p_j,q_j)_{j=0,1}\in\Dm(\pi)$.

\bigskip

For the borderline cases with $s_0=-s_1=:s>0$ condition (1) in
Theorem~\ref{ncss-thm} reduces to $1\ge\fracc1{p_0}+\fracc1{p_1}$.
Concerning the notation in \eqref{4.4'} we observe that $p^*_1=p_1$ 
for $F^{\Mi,s_0}_{p_0,q_0}\hookrightarrow L_\infty$ whilst
$\frac{|\M|}{p^*_1}=\fracc{|\M|}{p_1}+\fracc{|\M|}{p_0}-s$ for $0<s_0\le\fracc{|\M|}{p_0}$. Moreover, for $s=\fracc{|\M|}{p_0}\ge
|\M|$ there is not any space $F^{\Mi,-s}_{p_1,q_1}$ such that (1) in
Theorem~\ref{ncss-thm} holds (since $0<p_1<\infty$); hence the case with
$F^{\Mi,s}_{p_0,q_0}\hookrightarrow L_\infty$ is reduced to 
$s>\fracc{|\M|}{p_0}$.

\begin{thm} \label{FFF3-thm}
Let $(s,p_0,q_0)$ and $(-s,p_1,q_1)$ satisfy the three inequalities $s>0$,
$1\ge\fracc1{p_0}+\fracc1{p_1}$ and $\fracc1{q_0}+\fracc1{q_1}\ge 1$.
Then $\pi(\cdot,\cdot)$ is bounded
 \begin{equation}
 F^{\Mi,s}_{p_0,q_0}\oplus F^{\Mi,-s}_{p_1,q_1}
 \xrightarrow{\;\pi(\cdot,\cdot)\;}
 \bigcap\bigl\{\,F^{\Mi,-s}_{r,q_1}\bigm| \tfrac{|\M|}{p^*_1}
         <\fracc{|\M|}{r}\le \fracc{|\M|}{p} \,\bigr\}.
 \label{4.4-3}
 \end{equation}
When $s>\fracc{|\M|}{p_0}$, or when 
$s<\fracc{|\M|}{p_0}$ and $\fracc1{p_0}+\fracc1{p_1}<1$, the space
$F^{\Mi,-s}_{p^*_1,q_1}$ may be included, whereas when $s<\fracc{|\M|}{p_0}$
and $\fracc1{p_0}+\fracc1{p_1}=1$ the space $B^{\Mi,-s}_{p^*_1,\infty}$ can
be used instead.
\end{thm}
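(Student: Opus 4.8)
The plan is to invoke Lemma~\ref{pi-lem} and bound the three para-products $\pi_1,\pi_2,\pi_3$ separately into the spaces on the right of \eqref{4.4-3}, the defining series then converging termwise. Throughout $s_2=s_0+s_1=0$; the hypothesis $\fracc1{q_0}+\fracc1{q_1}\ge1$ says exactly that $q_2:=(\fracc1{q_0}+\fracc1{q_1})^{-1}\le1$, and $\fracc1{p_0}+\fracc1{p_1}\le1$ gives $p_2\ge1$, where $\fracc1{p_2}=\fracc1{p_0}+\fracc1{p_1}$. Because $s_0=s>0$ and $s_1=-s<0$, the operators $\pi_1$ and $\pi_3$ are covered by the parts of Theorem~\ref{basic-thm} that carry no lower bound on the smoothness; the genuinely new difficulty is $\pi_2$, whose output smoothness $s_2=0$ is too small for Theorem~\ref{Y2-thm}.

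For $\pi_1$ I would use $F^{\Mi,s}_{p_0,q_0}\hookrightarrow F^{\Mi,0}_{p_0,1}$ (from $s>0$, \eqref{1.19'}) together with \eqref{3.11} to reach the right endpoint $\fracc{|\M|}r=\fracc{|\M|}p$, and the embeddings of $F^{\Mi,s}_{p_0,q_0}$ into $L_\infty$ (if $s>\fracc{|\M|}{p_0}$) or into $\bigcap\{L_{r'}\mid p_0\le r'\le t_0\}$, $\fracc{|\M|}{t_0}=\fracc{|\M|}{p_0}-s$ (if $0<s<\fracc{|\M|}{p_0}$, cf.~\eqref{1.24}, the endpoint $r'=t_0$ being allowed since $s>|\M|(\fracc1{p_0}-1)_+=0$), fed into \eqref{3.9}. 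As $r'$ runs through its range---each value exceeding $1$ because $p_0>1$---this covers every $F^{\Mi,-s}_{r,q_1}$ with $\fracc{|\M|}{p^*_1}\le\fracc{|\M|}r\le\fracc{|\M|}p$, the left endpoint being attained as a full Triebel--Lizorkin space exactly in the two regimes $s>\fracc{|\M|}{p_0}$ and $0<s<\fracc{|\M|}{p_0}$ (at $s=\fracc{|\M|}{p_0}$ only values $r'<\infty$ are available, so only the open interval is reached). For $\pi_3$ the sign $s_1<0$ lets the last assertion of Theorem~\ref{basic-thm} give $\pi_3\colon F^{\Mi,s_0}_{p_0,q_0}\oplus F^{\Mi,s_1}_{p_1,q_1}\to F^{\Mi,0}_{p_2,q_2}$; a smoothness drop \eqref{1.19'} to index $-s$ and the Sobolev embedding \eqref{1.20'} (free in the sum-exponents) then place $\pi_3$ in the same intersection, reaching $F^{\Mi,-s}_{p^*_1,q_1}$.

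The crux is $\pi_2(u,v)=\sum_j(u_{j-1}v_j+u_jv_j+u_jv_{j-1})$. Using $2^{(s_0+s_1)j}=1$ and H\"older's inequality in $j$ (exponents $q_0,q_1$ combining to $q_2$) and in $x$ (exponents $p_0,p_1$ combining to $p_2$), exactly as in \eqref{3.21}--\eqref{3.22}, one obtains $\Norm{(\sum_j|u_{j-1}v_j+u_jv_j+u_jv_{j-1}|^{q_2})^{\fracci1{q_2}}}{L_{p_2}}\le c\norm u{F^{\Mi,s_0}_{p_0,q_0}}\norm v{F^{\Mi,s_1}_{p_1,q_1}}$. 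Since $q_2\le1\le p_2$, Proposition~\ref{zero-prop}(2) gives convergence of the $\pi_2$-series in $L_{p_2}$ with this bound---the step replacing the unavailable Theorem~\ref{Y2-thm}. If $\fracc1{p_0}+\fracc1{p_1}<1$, then $1<p_2<\infty$ and $L_{p_2}=F^{\Mi,0}_{p_2,2}$, so \eqref{1.20'} embeds $\pi_2$ into $F^{\Mi,-s}_{r,q_1}$ for $\fracc{|\M|}{p_0}+\fracc{|\M|}{p_1}-s\le\fracc{|\M|}r\le\fracc{|\M|}p$, the lower value being $\fracc{|\M|}{p^*_1}$ and admissible as $s>0$; thus $\pi_2$ reaches the full $F$-endpoint as well.

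The subtle case, which I expect to be the main obstacle, is $\fracc1{p_0}+\fracc1{p_1}=1$, i.e.\ $p_2=1$, since $L_1$ is not a Triebel--Lizorkin space. Here I would use the elementary $L_1\hookrightarrow B^{\Mi,0}_{1,\infty}$ (Young's inequality bounds $\sup_j\norm{u_j}{L_1}$ by $c\norm u{L_1}$) and then the Besov Sobolev embedding \eqref{1.20}, which out of a sum-exponent $\infty$ yields only $B^{\Mi,t}_{r,\infty}$ on the line $t-\fracc{|\M|}r=-|\M|$, $r>1$. For $\fracc{|\M|}r>\fracc{|\M|}{p^*_1}=|\M|-s$ one has $t=\fracc{|\M|}r-|\M|>-s$, so a further drop \eqref{1.19'} to $B^{\Mi,-s}_{r,1}$ and then \eqref{1.19''} still land in $F^{\Mi,-s}_{r,q_1}$, leaving the open interval intact; but at the endpoint $t=-s$ no smoothness remains to trade and only $B^{\Mi,-s}_{p^*_1,\infty}$ is obtained. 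As $\pi_1$ and $\pi_3$ reach $F^{\Mi,-s}_{p^*_1,q_1}\hookrightarrow B^{\Mi,-s}_{p^*_1,\infty}$ there, the sum $\pi=\pi_1+\pi_2+\pi_3$ maps into $B^{\Mi,-s}_{p^*_1,\infty}$, the Besov endpoint asserted. (When $s>\fracc{|\M|}{p_0}$ the endpoint $\fracc{|\M|}{p^*_1}=\fracc{|\M|}{p_1}$ lies strictly above the threshold $|\M|-s$, so this Besov route in fact recovers $F^{\Mi,-s}_{p_1,q_1}$, as claimed.) The necessity of $\fracc1{q_0}+\fracc1{q_1}\ge1$ is $(2')$ of Theorem~\ref{ncss-thm}.
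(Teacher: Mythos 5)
Your proof is correct and follows essentially the same route as the paper's: the decomposition $\pi=\pi_1+\pi_2+\pi_3$, with $\pi_2$ estimated via the H\"older argument of \eqref{3.21}--\eqref{3.22} and Proposition~\ref{zero-prop} (landing in $L_p$, whence the Besov endpoint $B^{\Mi,-s}_{p^*_1,\infty}$ when $p=1$), $\pi_3$ handled by the $s_1<0$ part of Theorem~\ref{basic-thm}, and $\pi_1$ by Theorem~\ref{basic-thm} with the first factor placed in Lebesgue spaces via \eqref{1.23}--\eqref{1.24}. The paper's proof is a terser version of exactly this argument, and your added detail (the explicit Sobolev-embedding chains and the endpoint bookkeeping at $s=\fracc{|\M|}{p_0}$) merely fills in what the paper leaves implicit.
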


\begin{proof} We have by Proposition~\ref{zero-prop} continuous mappings
 \begin{equation}
 F^{\Mi,s}_{p_0,q_0}\oplus F^{\Mi,-s}_{p_1,q_1}\xrightarrow{\;\pi_2(\cdot,\cdot)\;}
 L_p\hookrightarrow 
 \begin{cases}F^{\Mi,-s}_{r_0,q} &\quad\text{for $p>1$},\\
 B^{\Mi,-s}_{r_0,\infty} &\quad\text{for $p=1$},  \end{cases}
 \label{4.13'}
 \end{equation}
when $\fracp=\fracc1{p_0}+\fracc1{p_1}$,
$\frac1q=\fracc1{q_0}+\fracc1{q_1}$ and
$\fracc{|\M|}{r_0}=\fracc{|\M|}{p_1}+\fracc{|\M|}{p_0}-s$ 
(since the estimate there is shown as in \eqref{3.21} ff.). Moreover
$L_p\hookrightarrow\bigcap\bigl\{\,F^{\Mi,-s}_{r,q}\bigm|
 \fracc{|\M|}{r_0}<\fracc{|\M|}{r}\le\fracc{|\M|}{p} \,\bigr\} $
for $p\ge1$. Since $s_1<0$ the same result holds for
$\pi_3(\cdot,\cdot)$, and when this is combined with \eqref{3.8} for
$\pi_1(\cdot,\cdot)$ the theorem follows.
\end{proof}

The optimality of the receiving spaces above may be verified in the
same way as for Theorem~\ref{FFF-thm} in Remark~\ref{FFFopt-rem} ff., 
except for the statement on $B^{\Mi,-s}_{p^*_1,\infty}$. 

Also in this case does one get $A_2$ spaces
with $s_2<s_1$ by means of embeddings.

Concerning earlier treatments of $FFF$ cases with $s_0+s_1=0$ we
mention \cite{GS2}, where $F^{\Mi,s}_{2,2}\oplus F^{\Mi,-s}_{2,2}$
with $\M=(1,\dots,1,m_n)$ is given a treatment directly based on the
Fourier transformation.

\bigskip

For the borderline cases with
$s_0+s_1=\fracc{|\M|}{p_0}+\fracc{|\M|}{p_1}-|\M|$ one can assume that
$s_0+s_1>0$ since the case with $s_0+s_1=0$ is covered above (for
$s_0>0$ at least). Then
$\fracp:=\fracc1{p_0}+\fracc1{p_1}>1$. Here $p^*_1=p_1$ for
$F^{\Mi,s_0}_{p_0,q_0}\hookrightarrow L_\infty$ while
$\frac{|\M|}{p^*_1}=\fracc{|\M|}{p_1}+\fracc{|\M|}{p_0}-s_0$ for
$0<s_0\le\fracc{|\M|}{p_0}$. 

\begin{thm} \label{FFF4-thm}
Let $s_0+s_1=\fracc{|\M|}{p_0}+\fracc{|\M|}{p_1}-|\M|>0$ and suppose
that $s_1<0$. Then $\pi(\cdot,\cdot)$ is bounded
 \begin{equation}
 F^{\Mi,s_0}_{p_0,q_0}\oplus F^{\Mi,s_1}_{p_1,q_1}
 \xrightarrow{\;\pi(\cdot,\cdot)\;}
 \bigcap\bigl\{\,F^{\Mi,s_1}_{r,q_1}\bigm| \tfrac{|\M|}{p^*_1}
         <\fracc{|\M|}{r}\le |\M| \,\bigr\}.
 \label{4.4-4}
 \end{equation}
When $F^{\Mi,s_0}_{p_0,q_0}\hookrightarrow L_\infty$ the space 
$F^{\Mi,s_1}_{p_1,q_1}$ may be included for
$s_0>\fracc{|\M|}{p_0}$, and $B^{\Mi,s_1}_{p_1,\infty}$ 
can receive for $s_0=\fracc{|\M|}{p_0}$. Similarly
$B^{\Mi,s_1}_{p^*_1,\infty}$ may be included for $0<s_0<\fracc{|\M|}{p_0}$.
\end{thm}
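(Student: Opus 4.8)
The plan is to decompose $\pi=\pi_1+\pi_2+\pi_3$ as in Lemma~\ref{pi-lem} and estimate the three operators separately, in the spirit of the proof of Theorem~\ref{FFF3-thm}. The whole difficulty lies in the fact that at the present borderline one has $s_2:=s_0+s_1=\fracc{|\M|}{p_0}+\fracc{|\M|}{p_1}-|\M|=\fracc{|\M|}{p}-|\M|$, i.e.\ $s_2=|\M|(\fracc1{p}-1)$ with $\fracp=\fracc1{p_0}+\fracc1{p_1}>1$, so that the generic $\pi_2$-bound \eqref{3.17} of Theorem~\ref{basic-thm} just fails and must be replaced by a limiting argument through $L_1$. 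Throughout, $s_1<0<s_0$, so $s_0>s_1$ and the output sum-exponent in \eqref{4.4-4} is $q_1$.

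First I would dispose of $\pi_1$ and $\pi_3$, which cause no new trouble. For $\pi_1$ I embed the first factor by a Sobolev embedding ($F^{\Mi,s_0}_{p_0,q_0}\hookrightarrow L_{t_0}$, or $\hookrightarrow F^{\Mi,0}_{p_0,1}$, or $\hookrightarrow L_\infty$ according to the position of $s_0$ relative to $\fracc{|\M|}{p_0}$) and feed this into \eqref{3.8}--\eqref{3.11}; these bounds do not require the strict inequality in condition~(1) of Theorem~\ref{ncss-thm}, so they give $\pi_1(u,v)\in F^{\Mi,s_1}_{r,q_1}$ for $\tfrac{|\M|}{p^*_1}<\fracc{|\M|}{r}\le\fracc{|\M|}{p}$, exactly as in \eqref{4.8}. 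Since $s_1<0$, the operator $\pi_3$ is handled by the last statement of Theorem~\ref{basic-thm}: \eqref{3.19} gives $\pi_3(u,v)\in F^{\Mi,s_2}_{p,\infty}$, which Sobolev-embeds as in \eqref{4.13} into $\bigcap\{\,F^{\Mi,s_1}_{r,o}\mid \tfrac{|\M|}{p^*_1}\le\fracc{|\M|}{r}\le\fracc{|\M|}{p}\,\}$. Thus $\pi_1$ and $\pi_3$ alone already reach every $r$ up to $\fracc{|\M|}{r}=\fracc{|\M|}{p}$, which (as $\fracc{|\M|}{p}>|\M|$) is more than the asserted range demands; in particular the ceiling $\fracc{|\M|}{r}\le|\M|$ does not come from these two operators.

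The crux is $\pi_2$, and this is precisely where the bound $\fracc{|\M|}{r}\le|\M|$ originates. Paralleling \eqref{4.13'} (where Proposition~\ref{zero-prop} sent $\pi_2$ into $L_p$), I would use the estimate \eqref{3.22} to see that the $F^{\Mi,s_2}_{p,q_2}$-defining quantity of $\sum_j w_j$, with $w_j=u_{j-1}v_j+u_jv_j+u_jv_{j-1}$, is dominated by $\norm{u}{F^{\Mi,s_0}_{p_0,q_0}}\norm{v}{F^{\Mi,s_1}_{p_1,q_1}}$. As the $w_j$ obey the spectral condition \eqref{3.6} and $s_2=\fracc{|\M|}{p}-|\M|$ with $p<1$, Proposition~\ref{border-prop} then yields $\pi_2(u,v)\in L_1$ together with the corresponding norm bound. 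Finally $L_1\hookrightarrow\bigcap\{\,F^{\Mi,s_1}_{r,q_1}\mid |\M|+s_1<\fracc{|\M|}{r}\le|\M|\,\}$ by the Sobolev embeddings \eqref{1.21} between the two scales; and since $|\M|+s_1\le\tfrac{|\M|}{p^*_1}$ (with equality when $0<s_0\le\fracc{|\M|}{p_0}$, by \eqref{4.4'} and the borderline relation) this covers the full range $\tfrac{|\M|}{p^*_1}<\fracc{|\M|}{r}\le|\M|$. Intersecting the three ranges gives \eqref{4.4-4}.

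The endpoint inclusions at $\fracc{|\M|}{r}=\tfrac{|\M|}{p^*_1}$ are then read off case by case. When $F^{\Mi,s_0}_{p_0,q_0}\hookrightarrow L_\infty$ one has $p^*_1=p_1$; for $s_0>\fracc{|\M|}{p_0}$ the strict $L_\infty$-embedding lets $\pi_1$ (via \eqref{3.9},\eqref{3.11} with first factor in $L_\infty$) and $\pi_3$ reach the endpoint inside $F^{\Mi,s_1}_{p_1,q_1}$, while $\pi_2$ reaches it a fortiori since $\tfrac{|\M|}{p_1}>|\M|+s_1$ there. For $s_0=\fracc{|\M|}{p_0}$ the endpoint $\fracc{|\M|}{r}=\tfrac{|\M|}{p_1}=|\M|+s_1$ is the critical line of the $L_1$-embedding, where only $L_1\hookrightarrow B^{\Mi,s_1}_{p_1,\infty}$ holds (the Besov space appearing exactly as in the $p=1$ alternative of \eqref{4.13'}), forcing $B^{\Mi,s_1}_{p_1,\infty}$; and for $0<s_0<\fracc{|\M|}{p_0}$ the same critical-endpoint phenomenon at $\fracc{|\M|}{r}=\tfrac{|\M|}{p^*_1}=|\M|+s_1$ yields $B^{\Mi,s_1}_{p^*_1,\infty}$. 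I expect the main obstacle to be the sum-exponent bookkeeping in the $\pi_2$ step, namely verifying that the hypotheses $p,q\le1$ of Proposition~\ref{border-prop} are met by the quantity produced from \eqref{3.22} and that the passage $L_1\hookrightarrow F^{\Mi,s_1}_{r,q_1}$ correctly restores the output exponent $q_1$; the optimality of the Besov (rather than $F$) spaces at the critical endpoints must be cross-checked against (7) of Theorem~\ref{ncss-thm} and Remark~\ref{embd-rem}.
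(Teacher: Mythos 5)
Your proof follows the same route as the paper's own: the decomposition $\pi=\pi_1+\pi_2+\pi_3$, with $\pi_1$ handled by \eqref{3.8}--\eqref{3.11} after embedding the first factor (as in \eqref{4.8}), $\pi_3$ by the $s_1<0$ clause of Theorem~\ref{basic-thm} followed by Sobolev embeddings (as in \eqref{4.13}), and $\pi_2$ by the estimate \eqref{3.22}, Proposition~\ref{border-prop} and an embedding of $L_1$ into the target spaces; this is exactly the paper's \eqref{4.5-4'} combined with its instruction to argue ``along the lines of'' Theorem~\ref{FFF3-thm}. You also correctly identify $\pi_2$ as the sole source of the ceiling $\fracc{|\M|}{r}\le|\M|$ in \eqref{4.4-4}.

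There is, however, one genuine gap, at the very point you describe as ``the main obstacle'' and leave as a verification. Proposition~\ref{border-prop} requires the sum-exponent of the quantity produced by \eqref{3.22} to lie in $\,]0,1]$, i.e.\ $q_2=(\fracc1{q_0}+\fracc1{q_1})^{-1}\le1$, that is, $\fracc1{q_0}+\fracc1{q_1}\ge1$. Unlike Theorem~\ref{BBB4-thm}, Theorem~\ref{FFF4-thm} does \emph{not} assume this, so when $\fracc1{q_0}+\fracc1{q_1}<1$ the hypothesis simply fails and cannot be ``verified''. The paper closes this by a reduction your proposal misses: first embed $F^{\Mi,s_j}_{p_j,q_j}\hookrightarrow F^{\Mi,s_j-\varepsilon_j}_{r_j,o_j}$ with $\fracc{|\M|}{r_j}=\fracc{|\M|}{p_j}-\varepsilon_j$, which by \eqref{1.20'} is possible with \emph{arbitrary} sum-exponents $o_j$ in the $F$-scale; this preserves the borderline relation (both sides of it drop by $\varepsilon_0+\varepsilon_1$), keeps $s_1-\varepsilon_1<0$ and, for small $\varepsilon_j$, the positivity of the sum, while achieving $\fracc1{o_0}+\fracc1{o_1}\ge1$. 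Proposition~\ref{border-prop} then applies to the embedded pair and still gives $\pi_2\colon F^{\Mi,s_0}_{p_0,q_0}\oplus F^{\Mi,s_1}_{p_1,q_1}\to L_1$, after which the embedding of $L_1$ proceeds as you wrote. As it stands, your argument covers only the case $\fracc1{q_0}+\fracc1{q_1}\ge1$. A smaller point: your ``a fortiori'' inclusion of the endpoint $F^{\Mi,s_1}_{p_1,q_1}$ for $s_0>\fracc{|\M|}{p_0}$ rests on $L_1\hookrightarrow F^{\Mi,s_1}_{p_1,q_1}$, which needs $p_1\ge1$ in addition to $\fracc{|\M|}{p_1}>s_1+|\M|$; the same restriction is implicit in the paper's \eqref{4.5-4'} (where only $r\ge1$ occurs), so you are no worse off than the paper here, but it deserves a remark rather than silence.
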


\begin{proof} For $\pi_2$ one has continuous mappings (with
$\fracc{|\M|}{r_0}=\fracc{|\M|}{p_1}+\fracc{|\M|}{p_0}-s_0$) 
 \begin{equation}
 F^{\Mi,s_0}_{p_0,q_0}\oplus F^{\Mi,s_1}_{p_1,q_1}\xrightarrow{\;\pi_2\;}
 L_1\hookrightarrow B^{\Mi,s_1}_{r_0,\infty}\cap 
 (\bigcap\bigl\{\,F^{\Mi,s_1}_{r,q_1}\bigm| r_0\ge r\ge 1\,\bigr\} )
 \label{4.5-4'}
 \end{equation}
at least if $\fracc1q:=\fracc1{q_0}+\fracc1{q_1}\ge 1$,
cf.~Proposition~\ref{border-prop}. In general $q\le 1$ may be
achieved from Sobolev embeddings of the $F^{\Mi,s_j}_{p_j,q_j}$.
Since $s_1<0$ the proof may be conducted along the lines of that of
Theorem~\ref{FFF3-thm}. 
\end{proof}

On one hand it is not clear whether the $B^{\Mi,s_1}_{p^*_1,\infty}$ are
optimal, on the other hand\,---\,by a Sobolev embedding of
$F^{\Mi,s_1}_{p_1,q_1}$\,---\,the cases with $s_0\ge s_1\ge0$ and 
 \begin{equation}
 s_0+s_1=\fracc{|\M|}{p_0}+\fracc{|\M|}{p_1}-|\M|>0
 \label{4.5-4''}
 \end{equation}
are covered too,
but only with receiving $A_2$ spaces for which $s_2<\min(s_0,s_1)$.

However, this suffices to see that $(s_j,p_j,q_j)_{j=0,1}\in\Dm(\pi,FF)$ when
\eqref{4.5-4''} holds.

\subsection{The $BBB$ cases} \label{BBB-ssect}
We shall now modify the arguments in the subsection above and obtain
analogous results for the Besov spaces.

\begin{thm} \label{BBB-thm} 
Let $\M$ and the numbers $s_0$ and $s_1\in\R$ and
$p_0,p_1,q_0$ and $q_1\in\,]0,\infty]$ be given such that
$s_0+s_1>|\M|\max(0,\fracc1{p_0}+\fracc1{p_1}-1)$.

For $s_0\ge s_1$ and with $p$, $p^*_1$ and $q$ given by
\eqref{4.4'}, the product $\pi(\cdot,\cdot)$ is continuous
 \begin{equation}
 B^{\Mi,s_0}_{p_0,q_0}\oplus B^{\Mi,s_1}_{p_1,q_1}
 \xrightarrow{\;\pi(\cdot,\cdot)\;}
 \bigcap\bigl\{\,B^{\Mi,s_1}_{r,q}\bigm| \tfrac{|\M|}{p^*_1}
          <\fracc{|\M|}{r}\le \fracc{|\M|}{p} \,\bigr\}.
 \label{4.14}
 \end{equation}
Furthermore one can include $B^{\Mi,s_1}_{p^*_1,o}$ on the right 
hand side of \eqref{4.14} when $B^{\Mi,s_0}_{p_0,q_0}\hookrightarrow
L_\infty$ holds in addition to one of the following conditions:
 \begin{equation}
 \begin{alignedat}{2}
 &\text{\rom{(1a)}} &\quad s_1-\fracc{|\M|}{p_1}&\ge
 s_0-\fracc{|\M|}{p_0},\ o\ge\max(q_0,q_1), \text{ and 
      $B^{\Mi,s_1}_{p_1,q_1}\hookrightarrow L_\infty$ if
           $s_1=\fracc{|\M|}{p_1}$},
 \\
 &\text{\rom{(1b)}} &\quad 0\le s_1-\fracc{|\M|}{p_1} &<
  s_0-\fracc{|\M|}{p_0},\ o\ge q,
 \\
 &\text{\rom{(1c)}} &\quad s_1-\fracc{|\M|}{p_1} &<
  s_0-\fracc{|\M|}{p_0},\ 0<s_1<\frac{|\M|}{p_1},\ o\ge q,
 \\
 &&&\hphantom{s_0-\fracc{|\M|}{p_0},\ 0}
  \text{and $q_1<t_1$ if $s_1=s_0=\fracc{|\M|}{p_0}$},
 \\ 
 &\text{\rom{(1d)}} &\quad s_1-\fracc{|\M|}{p_1} &<
  s_0-\fracc{|\M|}{p_0},\ s_1\le0,\ s_1<\frac{|\M|}{p_1}\text{ and } o\ge q_1,
 \end{alignedat}
 \label{4.16}
 \end{equation}
where $t_j=|\M|(\fracc{|\M|}{p_j}-s_j)^{-1}$ for $j=0$ and $1$,
and under each of the conditions
 \begin{equation}
 \begin{alignedat}{2}
 &\text{\rom{(2a)}}&\quad s_1&>\fracc{|\M|}{p_1} 
           \text{ and }o\ge\max(q_0,q_1),
 \\
 &\text{\rom{(2b)}}&\quad s_1&=\fracc{|\M|}{p_1},\
  B^{\Mi,s_1}_{p_1,q_1}\hookrightarrow L_\infty,\ s_0<\fracc{|\M|}{p_0},\
 o\ge\max(q_0,q_1) \text{ and $q_0<t_0$},
 \\
 &\text{\rom{(2c)}}&\quad 0<s_1&<\fracc{|\M|}{p_1},\
 s_0<\fracc{|\M|}{p_0},\ s_0>s_1,\ o\ge q_1
  \text{ and $q_0<t_0$},
 \\
 &\text{\rom{(2d)}}&\quad 0<s_1&<\fracc{|\M|}{p_1},\
 s_0<\fracc{|\M|}{p_0},\ s_0=s_1,\ o\ge q_j
  \text{ and $q_j<t_j$ for $j=0$, $1$},
 \\
 &\text{\rom{(2e)}} &\quad s_1&<\fracc{|\M|}{p_1},\ 
 s_0<\fracc{|\M|}{p_0},\ s_1\le0,
 \ o\ge q_1 \text{ and $q_0<t_0$},
 \end{alignedat}
 \label{4.17}
 \end{equation}
the space $B^{\Mi,s_1}_{p^*_1,o}$ can be included when
$0<s_0\le\fracc{|\M|}{p_0}$. 

In \eqref{4.16} and \eqref{4.17} it suffices with
$q_j=t_j$ if either $t_j\le2$ or $\M=(1,\dots,1)$.
\end{thm}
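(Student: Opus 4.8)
The plan is to run the argument of Theorem~\ref{FFF-thm} with the Triebel--Lizorkin input of Corollary~\ref{basic-cor} replaced by its Besov counterpart \eqref{4.8'}--\eqref{4.10'}, and with the free $F$-scale Sobolev embedding \eqref{1.20'} replaced by the constrained $B$-scale embedding \eqref{1.20}. By Lemma~\ref{pi-lem} one has $\pi(\cdot,\cdot)=\pi_1+\pi_2+\pi_3$ (the three series converging by Theorem~\ref{basic-thm}), so it suffices to map each $\pi_i\colon B^{\Mi,s_0}_{p_0,q_0}\oplus B^{\Mi,s_1}_{p_1,q_1}$ into every space on the right of \eqref{4.14} and to intersect the three ranges. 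For the open range $\tfrac{|\M|}{p^*_1}<\fracc{|\M|}{r}\le\fracc{|\M|}{p}$ this is immediate: $\pi_1$ lands in $B^{\Mi,s_1}_{r,q_1}$, $\pi_3$ in $B^{\Mi,s_1}_{r,t}$ with $q_0\kd{s_0=s_1}\le t$, and $\pi_2$ in $B^{\Mi,s_1}_{r,t}$ for arbitrary $t$, whence the simple embedding \eqref{1.19} lifts all three into $B^{\Mi,s_1}_{r,q}$ with $q=\max(q_0\kd{s_0=s_1},q_1)$; this proves \eqref{4.14}.

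To locate the binding lower bound I note that the lower bounds for $\fracc{|\M|}{r}$ in \eqref{4.8'}, \eqref{4.9'}, \eqref{4.10'} coincide with those in \eqref{4.8}--\eqref{4.10}, so the inequality chains \eqref{4.11} and \eqref{4.12} already established in the proof of Theorem~\ref{FFF-thm} apply verbatim and identify $\tfrac{|\M|}{p^*_1}$ as the largest of the three. Consequently, including the endpoint $\fracc{|\M|}{r}=\tfrac{|\M|}{p^*_1}$ reduces to including it for whichever $\pi_i$ is binding there, which is exactly the content of the endpoint clauses \eqref{4.8a}--\eqref{4.8d} (for $\pi_1$), the sentence following \eqref{4.9'} (for $\pi_2$), and \eqref{4.10a}--\eqref{4.10e} (for $\pi_3$).

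Now I split into the two regimes. When $B^{\Mi,s_0}_{p_0,q_0}\hookrightarrow L_\infty$ (so $s_0\ge\fracc{|\M|}{p_0}$), the chain \eqref{4.12} pushes $\pi_1$ down to $\fracc{|\M|}{r}=\fracc{|\M|}{p_1}$ and makes $\tfrac{|\M|}{p^*_1}$ be realized by $\pi_3$ for $s_1-\fracc{|\M|}{p_1}\ge s_0-\fracc{|\M|}{p_0}$ (this is (1a), via \eqref{4.10a}/\eqref{4.10b}) and by $\pi_1$ under the reverse strict inequality (this is (1b)--(1d), via \eqref{4.8a}--\eqref{4.8d}); the subcases $0<s_1<\fracc{|\M|}{p_1}$ and $s_1\le0$ draw on \eqref{4.10c}/\eqref{4.10d} and \eqref{4.10e}, producing the strict clause $q_1<t_1$ in (1c) and the sum-exponent clause in (1d). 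When $0<s_0\le\fracc{|\M|}{p_0}$, the chain \eqref{4.11} forces $\pi_1,\pi_2,\pi_3$ to share the lower bound $\fracc{|\M|}{p_1}+\fracc{|\M|}{p_0}-s_0$ and makes $\pi_3$ binding at $\tfrac{|\M|}{p^*_1}$; sorting by the sign of $s_1$ gives (2a),(2b) via \eqref{4.10a}/\eqref{4.10b} and (2e) via \eqref{4.10e}, while for $0<s_1<\fracc{|\M|}{p_1}$ the simultaneously binding $\pi_1$ needs \eqref{4.8c}/\eqref{4.8d} (hence $q_0<t_0$) and, when $s_0=s_1$, $\pi_3$ needs \eqref{4.10c}/\eqref{4.10d}, yielding (2c) and (2d). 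In each subcase the admissible $o$ is dictated by the $q\le o$ constraint of \eqref{1.20}: an operator reaching $\tfrac{|\M|}{p^*_1}$ through a Sobolev equality forces $o$ above its own sum-exponent ($q_1$ for $\pi_1$, $q_0$ for $\pi_3$, $q_2$ for $\pi_2$), which collapses to $o\ge\max(q_0,q_1)$, $o\ge q$ or $o\ge q_1$ as recorded; and the final sentence of the theorem is immediate from \eqref{4.8d} and \eqref{4.10d}.

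I expect the main obstacle to be exactly this sum-exponent bookkeeping, which has no analogue in the $FFF$ case. There the embedding \eqref{1.20'} imposes nothing on $o$, so securing the $r$-endpoint finishes the argument; here \eqref{1.20} constrains $o$ precisely on the Sobolev line $s-\fracc{|\M|}{r}=t-\fracc{|\M|}{r'}$, and at $\fracc{|\M|}{r}=\tfrac{|\M|}{p^*_1}$ two or three of the $\pi_i$ may lie on that line at once. Tracking which operators do, together with the strict inequalities $q_j<t_j$ inherited from the between-scales embedding \eqref{1.21}, is what generates the finely split lists \eqref{4.16}--\eqref{4.17}. The remaining verifications—that each cited endpoint clause genuinely applies and that the three ranges intersect to the stated space—are routine, just as in Remark~\ref{FFFopt-rem}.
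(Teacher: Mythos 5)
Your overall strategy---decompose $\pi=\pi_1+\pi_2+\pi_3$ via Lemma~\ref{pi-lem}, obtain the open range \eqref{4.14} from \eqref{4.8'}--\eqref{4.10'} together with simple embeddings, and then secure the endpoint $\fracc{|\M|}{r}=\tfrac{|\M|}{p^*_1}$ from the endpoint clauses of Corollary~\ref{basic-cor}---is indeed the paper's strategy, and it does cover \eqref{4.14} and the cases (1a), (1b), (1d), (2a), (2b), (2e). But it does \emph{not} prove (1c) and (2c) in the stated generality, and this is a genuine gap, not bookkeeping. The only clause of Corollary~\ref{basic-cor} that lets $\pi_3$ reach the endpoint when $0<s_1<\fracc{|\M|}{p_1}$ is \eqref{4.10c} (or \eqref{4.10d}), and it carries the hypothesis $q_1<t_1$ (resp.\ $q_1\le t_1$ with $t_1\le2$ or $\M=(1,\dots,1)$). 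The theorem, however, demands $q_1<t_1$ in (1c) \emph{only} in the subcase $s_1=s_0=\fracc{|\M|}{p_0}$, and in (2c) it imposes no condition on $q_1$ whatsoever. Note that when $s_0>\fracc{|\M|}{p_0}$ the $\pi_3$ lower bound lies strictly below $\fracc{|\M|}{p_1}$, so no endpoint clause is needed; the critical situations are $s_1<s_0=\fracc{|\M|}{p_0}$ in (1c), and all of (2c), where the three lower bounds coincide with $\tfrac{|\M|}{p^*_1}$ and your reduction forces the extraneous hypothesis $q_1<t_1$. In exactly these cases your proof establishes a strictly weaker statement than Theorem~\ref{BBB-thm}.

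The paper closes this gap with an argument that has no counterpart in your proposal, namely the $\varepsilon$-shift in \eqref{4.20}--\eqref{4.22}. One first embeds $A_1=B^{\Mi,s_1}_{p_1,q_1}\hookrightarrow B^{\Mi,-\varepsilon}_{p_\varepsilon,q_1}$ with $\fracc{|\M|}{p_\varepsilon}=\fracc{|\M|}{p_1}-s_1-\varepsilon$, where $0<\varepsilon<s_0-s_1$ and $\varepsilon\le\fracc{|\M|}{p_1}-s_1$; this Sobolev embedding preserves the sum-exponent $q_1$ and costs nothing, but it makes the second smoothness index \emph{negative}. Then the last statement of Theorem~\ref{basic-thm} applies to $\pi_3$ on $A_0\oplus B^{\Mi,-\varepsilon}_{p_\varepsilon,q_1}$ with no endpoint obstruction, landing in $B^{\Mi,s_0-\varepsilon}_{r_\varepsilon,q_2}$ for $\fracc{|\M|}{r_\varepsilon}=\fracc{|\M|}{p_0}+\fracc{|\M|}{p_\varepsilon}$ and $\fracc1{q_2}=\fracc1{q_0}+\fracc1{q_1}$. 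Finally one checks the Sobolev embedding $B^{\Mi,s_0-\varepsilon}_{r_\varepsilon,q_2}\hookrightarrow B^{\Mi,s_1}_{p_1,q_1}$, which holds precisely because $s_0\ge\fracc{|\M|}{p_0}$, $\varepsilon\le\fracc{|\M|}{p_1}-s_1$ and $q_2\le q_1$ (this is the content of \eqref{4.22}); for (2c) the same computation instead gives $B^{\Mi,s_0-\varepsilon}_{r_\varepsilon,q_2}\hookrightarrow B^{\Mi,s_1}_{p^*_1,q_2}$, since $s_0-\varepsilon-\fracc{|\M|}{r_\varepsilon}=s_1-\tfrac{|\M|}{p^*_1}$ and $\tfrac{|\M|}{p^*_1}<\fracc{|\M|}{r_\varepsilon}$. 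The moral, which your closing paragraph almost but not quite reaches, is that the lists \eqref{4.16}--\eqref{4.17} are \emph{not} generated verbatim by the clauses of Corollary~\ref{basic-cor}: in (1c) and (2c) one deliberately trades smoothness for freedom in the sum-exponent to eliminate the condition on $q_1$, and without this step those cases remain unproven whenever $q_1\ge t_1$.
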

 
\begin{proof} 
Since \eqref{4.14} may be obtained analogously to
the corresponding $FFF$ cases, we focus on the modifications that 
lead to \eqref{4.16} and \eqref{4.17}.

In (1a) one can use that $B^{\Mi,s_j}_{p_j,q_j}\hookrightarrow 
L_\infty$ for $j=0$ and $1$, whereby the restriction $o\ge q_1$ occurs
in the treatment of $\pi_1(\cdot,\cdot)$ and $o\ge q_0$ comes from
$\pi_3(\cdot,\cdot)$, cf.~\eqref{4.8'}, \eqref{4.8a}--\eqref{4.8b},
respectively \eqref{4.10a}--\eqref{4.10b}. To treat (1b) for
$s_1=\fracc{|\M|}{p_1}>0$ is simpler since
$(\fracc{|\M|}{p_1}+\fracc{|\M|}{p_0}-s_0)_+<\fracc{|\M|}{p_1}$ (so
that it suffices to obtain $\fracc{|\M|}r$ equal to the lower bound in
\eqref{4.8'} only). For $s_1=\fracc{|\M|}{p_1}=0$ one can apply
\eqref{4.10e} for the estimate of $\pi_3$.

For $s_0>\fracc{|\M|}{p_0}$ condition (1c) is easy since \eqref{4.8a},
\eqref{4.9'} and \eqref{4.10'} provide the necessary results. However,
for $s_0=\fracc{|\M|}{p_0}$ the three lower bounds of $\fracc{|\M|}r$
in \eqref{4.8'}--\eqref{4.10'} all coincide with $\fracc{|\M|}{p_1}$,
but at least for $s_1=s_0$ one can apply \eqref{4.10c}. 

The case with $s_1<s_0=\fracc{|\M|}{p_0}$ requires a special treatment
of $\pi_3$ (to allow $q_1\ge t_1$). Note that $A_1\hookrightarrow 
B^{\Mi,-\varepsilon}_{p_\varepsilon,q_1}$ provided
$s_1-\fracc{|\M|}{p_1}\ge -\varepsilon-\fracc{|\M|}{p_\varepsilon}$
and $p_\varepsilon\ge p_1$, so that under the restriction
$\varepsilon\in \,]0,\fracc{|\M|}{p_1}-s_1]$ this embedding exists for
$\fracc{|\M|}{p_\varepsilon}=\fracc{|\M|}{p_1}-s_1-\varepsilon$.
We shall now take $\varepsilon<s_0-s_1$ and show that, with
$\fracc{|\M|}{r_\varepsilon}=\fracc{|\M|}{p_0}+\fracc{|\M|}{p_\varepsilon}$
and $\fracc1{q_2}=\fracc1{q_0}+\fracc1{q_1}$, 
 \begin{equation}
 A_0\oplus A_1\hookrightarrow A_0\oplus
 B^{\Mi,-\varepsilon}_{p_\varepsilon,q_1} \xrightarrow{\;\pi_3\;}
 B^{\Mi,s_0-\varepsilon}_{r_\varepsilon,q_2}\hookrightarrow
 B^{\Mi,s_1}_{p_1,q_1}.
 \label{4.20}
 \end{equation}
According to Theorem~\ref{basic-thm} it is sufficient to verify the last
embedding, i.e.,
 \begin{equation}
 s_0-\varepsilon-\fracc{|\M|}{r_\varepsilon}\ge s_1-\fracc{|\M|}{p_1},\qquad
 \fracc{|\M|}{r_\varepsilon}\ge \fracc{|\M|}{p_1}.
 \label{4.22}
 \end{equation}
Note that $\fracc{|\M|}{r_\varepsilon}=\fracc{|\M|}{p_0}+
\fracc{|\M|}{p_1}-s_1-\varepsilon$, so the former inequality reduces to
$s_0\ge\fracc{|\M|}{p_0}$ and the latter to 
$\fracc{|\M|}{p_1}-s_1\ge\varepsilon$.

Concerning (1d) it is seen that \eqref{4.10'} suffices for $\pi_3$
when $s_0>\fracc{|\M|}{p_0}$, whereas \eqref{4.10e} applies for
$s_0=\fracc{|\M|}{p_0}$. 

\medskip

(2a) is based on \eqref{4.10a} and (2b) combines \eqref{4.8b} and
\eqref{4.10b}. Concerning (2c) the cases $q_1\ge t_1$ are included as
in (1c) above, except that the embedding
$B^{\Mi,s_0-\varepsilon}_{r_\varepsilon,q_2}\hookrightarrow
B^{\Mi,s_1}_{p^*_1,q_2}$ is wanted. However, this is obtained since 
$\frac{|\M|}{p^*_1}=\fracc{|\M|}{r_\varepsilon}+s_1+
\varepsilon-s_0<\fracc{|\M|}{r_\varepsilon}$ (using
$\varepsilon<s_0-s_1$) and since 
$s_0-\varepsilon-\fracc{|\M|}{r_\varepsilon}=s_0-\varepsilon-
\fracc{|\M|}{p_0}-(\fracc{|\M|}{p_1}-s_1-\varepsilon)=s_1-
\frac{|\M|}{p^*_1}$.

From \eqref{4.8c} and \eqref{4.10c} one derives (2d), and \eqref{4.10e}
may be used for (2e).

The final statement in the theorem is based on \eqref{4.8d} and \eqref{4.10d}.
\end{proof}

\begin{rem} \label{BBBopt-rem}
As for the $FFF$ cases it is seen that the spaces on the right hand
sides of \eqref{4.14} can neither have  smoothness
indices larger than $s_1=\min(s_0,s_1)$ nor have integral-exponents outside
$[\frac{|\M|}{p^*_1}, \fracc{|\M|}{p}]$ when the index is $s_1$. 
And, still for index $s_1$, the sum-exponent can not be lower than $q$. 

The possibility of obtaining $\fracc{|\M|}{r}$ equal to
$\frac{|\M|}{p^*_1}$  is more
delicate. Observe that the conditions on $s_j-\fracc{|\M|}{p_j}$
and $s_j$ in (1a)--(1d) and (2a)--(2e) exhaust the possibilities.
(In (1d) the subcase $s_1=0=\fracc{|\M|}{p_1}$ of (1b) is omitted,
and in (2b)--(2e) the possibility $s_0=\fracc{|\M|}{p_0}$ is excluded 
because of overlap with the case $B^{\Mi,s_0}_{p_0,q_0}\hookrightarrow
L_\infty$.) 

Concerning the optimality of \eqref{4.16} and \eqref{4.17} we have that
 \begin{itemize}
  \item $o\ge\max(q_0,q_1)$ in (1a), (2a) and (2b) is necessary by
        $(3')$ and $(5')$ in Theorem~\ref{ncss-thm} and in (2d) by 
        $(3')$ applied for $j=0$ and $j=1$. Similarly $o\ge q$ is 
        necessary in (1b) and (1c) and $o\ge q_1$ in (1d), (2c) and (2e);
  \item the $L_\infty$-conditions in (1a) and (2b) are
        unremovable by (7) in \eqref{2.14} and Remark~\ref{embd-rem}.
  \item the conditions $q_j<t_j$ ocurring in (1c) and (2b)--(2e) are
        not in the present paper shown to be necessary. 

        For case (2d) there
        are counterexamples in \cite{ST} that show for $\M=(1,\dots,1)$ that
        $q_0\le t_0$ and $q_1\le t_1$ must hold, but for the other cases 
        it seems to be an open problem whether these conditions 
        are necessary. 
 \end{itemize}
\end{rem}

\begin{rem} \label{vertex-rem}
The conditions $q_j<t_j$ are connected to the question
of having $A_2$ represented by the ``$\circ$'' in Figure~\ref{A2-fig},
and the answer depends on $A_0\oplus A_1$ instead
of on $A_2$, contrary to the rest of the line segment with
$s=\min(s_0,s_1)$. It should be observed that such conditions apply 
only for {\em one value\/} of $j$, except for (2d) where
$s_0=s_1$. 

For this reason the cases with $s_0>s_1\ge0$ can not be
reduced to those with $s_0=s_1$ by means of a Sobolev embedding
$B^{\Mi,s_0}_{p_0,q_0}\hookrightarrow B^{\Mi,s_1}_{r,q_0}$. 
\end{rem}

\bigskip

Because of the condition on the sum-exponents in the Sobolev
embeddings for the Besov spaces, the question of having $A_2$ with
$s_2<s_1=\min(s_0,s_1)$ and $\fracc{|\M|}{p_2}=s_2-
\min^+(s_j-\fracc{|\M|}{p_j})$ is more complicated for the $BBB$ cases
than for the $FFF$ cases.  

However it suffices to use embeddings of $B^{\Mi,s_0}_{p_0,q_0}$ and
$B^{\Mi,s_1}_{p_1,q_1}$:

\begin{thm} \label{BBB2-thm}
With assumptions as in Theorem~\ref{BBB-thm}, the product
$\pi(\cdot,\cdot)$ is bounded for each $s_2<s_1$ and $o\in\,]0,\infty]$
 \begin{equation}
 B^{\Mi,s_0}_{p_0,q_0}\oplus B^{\Mi,s_1}_{p_1,q_1}
 \xrightarrow{\;\pi(\cdot,\cdot)\;}  
 \bigcap\bigl\{\,B^{\Mi,s_2}_{r,o}\bigm| 
 (\tfrac{|\M|}{p^*_1}-(s_1-s_2))_+
         <\fracc{|\M|}{r}\le \fracc{|\M|}{p} \,\bigr\}.
 \label{4.31}
 \end{equation}
Moreover, when $\frac{|\M|}{p^*_1}-s_1+s_2\ge0$ the space $B^{\Mi,s_2}_{r,o}$
with $\fracc{|\M|}r=\frac{|\M|}{p^*_1}-s_1+s_2$
may be included if $B^{\Mi,s_0}_{p_0,q_0}\hookrightarrow L_\infty$
holds in addition to one of the conditions 
 \begin{equation}
 \begin{alignedat}{2}
 &\text{\rom{(1a)}} &\quad s_1-\fracc{|\M|}{p_1}&>
 s_0-\fracc{|\M|}{p_0},\ o\ge q_0, 
 \\
 &\text{\rom{(1b)}} &\quad s_1-\fracc{|\M|}{p_1}&=
 s_0-\fracc{|\M|}{p_0},\ o\ge\max(q_0,q_1), \text{ and 
      $B^{\Mi,s_1}_{p_1,q_1}\hookrightarrow L_\infty$ if
           $s_1=\fracc{|\M|}{p_1}$},
 \\
 &\text{\rom{(1c)}} &\quad s_1-\fracc{|\M|}{p_1} &<
  s_0-\fracc{|\M|}{p_0},\ o\ge q_1,
 \end{alignedat}
 \label{4.33}
 \end{equation}
and  when $\frac{|\M|}{p^*_1}-s_1+s_2\ge0$ each of the conditions 
 \begin{equation}
 \begin{alignedat}{2}
 &\text{\rom{(2a)}}&\quad s_1&>\fracc{|\M|}{p_1} 
           \text{ and }o\ge q_0,
 \\
 &\text{\rom{(2b)}}&\quad s_1&=\fracc{|\M|}{p_1},\
  B^{\Mi,s_1}_{p_1,q_1}\hookrightarrow L_\infty,\ s_0<\fracc{|\M|}{p_0},\
 o\ge q_0,
 \\
 &\text{\rom{(2c)}}&\quad s_1&<\fracc{|\M|}{p_1},\
 s_0<\fracc{|\M|}{p_0},\ o\ge (\fracc1{q_0}+\fracc1{q_1})^{-1}
 \end{alignedat}
 \label{4.34}
 \end{equation}
allow the space $B^{\Mi,s_2}_{r,o}$ with $\fracc{|\M|}{r}=
\frac{|\M|}{p^*_1}-s_1+s_2$ to be included when $0<s_0\le\fracc{|\M|}{p_0}$.
\end{thm}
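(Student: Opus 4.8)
The plan is to obtain Theorem~\ref{BBB2-thm} from Theorem~\ref{BBB-thm} and Corollary~\ref{basic-cor} by composing $\pi(\cdot,\cdot)$ with the linear embeddings of Section~\ref{embd-ssect}, exploiting that lowering the output smoothness from $s_1=\min(s_0,s_1)$ to $s_2<s_1$ frees a gap $\varepsilon=s_1-s_2>0$. The decisive point is that this gap lets one replace the \emph{saturated} Sobolev embeddings of \eqref{1.20}\,---\,which alone force both a sum-exponent restriction $q\le o$ and, at the critical integrability, the conditions $q_j<t_j$\,---\,either by the simple embedding \eqref{1.19'} or by \emph{strict} Sobolev embeddings, neither of which constrains the sum-exponent. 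Throughout I keep $s_0\ge s_1$ and the notation $p,p^*_1,q$ of \eqref{4.4'}.

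First I would treat the open range \eqref{4.31}, where $o$ is arbitrary, splitting the admissible interval at the vertex $\frac{|\M|}{p^*_1}$. For $\frac{|\M|}{p^*_1}<\fracc{|\M|}{r}\le\fracc{|\M|}{p}$ the space $B^{\Mi,s_1}_{r,q}$ is already furnished by Theorem~\ref{BBB-thm}, and $B^{\Mi,s_1}_{r,q}\hookrightarrow B^{\Mi,s_2}_{r,o}$ holds for every $o$ by \eqref{1.19'} since $\varepsilon>0$; for $(\frac{|\M|}{p^*_1}-\varepsilon)_+<\fracc{|\M|}{r}\le\frac{|\M|}{p^*_1}$ the point $(s_2,\fracc{|\M|}{r})$ lies strictly above the Sobolev line through $(s_1,\frac{|\M|}{p^*_1})$, so choosing $\tilde r$ with $\frac{|\M|}{\tilde r}$ just above $\frac{|\M|}{p^*_1}$ (still inside the range of Theorem~\ref{BBB-thm}) makes $B^{\Mi,s_1}_{\tilde r,q}\hookrightarrow B^{\Mi,s_2}_{r,o}$ a Sobolev embedding with \emph{strict} slope inequality, hence valid for all $o$ by \eqref{1.20}. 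This yields \eqref{4.31}.

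The endpoint $\fracc{|\M|}{r}=\frac{|\M|}{p^*_1}-\varepsilon$ is the delicate part: the target then sits exactly on the Sobolev line issuing from the vertex, so no strict embedding is available and one must land on that line by a saturated step. Rather than force the vertex \emph{at smoothness} $s_1$\,---\,the source of the conditions $q_j<t_j$ in Theorem~\ref{BBB-thm}\,---\,I would, in the regimes where one factor lies in $L_\infty$, pre-embed by a saturated Sobolev embedding of a \emph{single} factor the one on whose Sobolev line the target vertex lies, and only then invoke Theorem~\ref{BBB-thm} for the resulting pair, whose smoothness indices are now strictly ordered. Under $B^{\Mi,s_0}_{p_0,q_0}\hookrightarrow L_\infty$ the target vertex lies on the $(s_0,p_0)$-line in case (1a), on both lines in (1b), and on the $(s_1,p_1)$-line in (1c); pre-embedding the corresponding factor down to smoothness $s_2$ costs exactly $o\ge q_0$, $o\ge\max(q_0,q_1)$, $o\ge q_1$ respectively, and in the new generic pair the remaining factor still lies in $L_\infty$, so reaching its vertex uses only the generic parts \eqref{4.8'}--\eqref{4.10'} of Corollary~\ref{basic-cor}, with no $q_j<t_j$ entering. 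The subcases (2a), (2b), where $0<s_0\le\fracc{|\M|}{p_0}$ but $B^{\Mi,s_1}_{p_1,q_1}\hookrightarrow L_\infty$, are handled identically (the $(s_0,p_0)$-line binds, giving $o\ge q_0$); in each case one checks via \eqref{4.4'} that the recomputed vertex of the embedded pair equals $\frac{|\M|}{p^*_1}-\varepsilon$.

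The genuine obstacle is case (2c), where $s_1<\fracc{|\M|}{p_1}$ and $s_0<\fracc{|\M|}{p_0}$, so that \emph{neither} factor embeds into $L_\infty$: then all three of $\pi_1$, $\pi_2$, $\pi_3$ have their natural outputs on the common diagonal line $s-\fracc{|\M|}{p}=(s_0-\fracc{|\M|}{p_0})+(s_1-\fracc{|\M|}{p_1})$, and a naive componentwise reduction only yields the sum-exponent $\max(q_0,q_1)$. To obtain the sharp value $o\ge q_2:=(\fracc1{q_0}+\fracc1{q_1})^{-1}$, which matches the necessary condition $(6')$ of Theorem~\ref{ncss-thm}, I would route the product through an intermediate $L_\nu$ space as in the proofs of Theorems~\ref{FFF3-thm} and \ref{FFF4-thm} (note that condition (1) of Theorem~\ref{ncss-thm} guarantees $\nu\ge1$), using the summation Propositions~\ref{zero-prop} and \ref{border-prop}, after a Sobolev reduction to $\fracc1{q_0}+\fracc1{q_1}\ge1$, so that the surviving norm is the $\ell_{q_2}$-one, and then embedding $L_\nu$ into $B^{\Mi,s_2}_{r,o}$. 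Verifying that this produces precisely $q_2$\,---\,and that it dovetails with the strict and simple embeddings used for the rest of the interval\,---\,is, I expect, the main work; the remaining cases reduce cleanly to Theorem~\ref{BBB-thm} as above.
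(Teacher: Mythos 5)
Your handling of \eqref{4.31} and of cases (1a)--(1c), (2a)--(2b) is viable and close in spirit to the paper's own proof (the paper, rather than pre-embedding one factor and re-invoking Theorem~\ref{BBB-thm}, directly improves the estimate of the single problematic operator, $\pi_1$ or $\pi_3$, via \eqref{4.8'} resp.\ the negative-smoothness part \eqref{3.18} of Theorem~\ref{basic-thm}; both routes rest on the same machinery, and I checked that your reduced pairs do land in endpoint cases of Theorem~\ref{BBB-thm} yielding the stated exponents $q_0$, $\max(q_0,q_1)$, $q_1$). Two repairs are needed, though. First, you must begin with ``it suffices to treat $s_2=s_1-\varepsilon$ for small $\varepsilon>0$'' (as the paper does): for strongly negative $s_2$ the pre-embedded factor $B^{\Mi,s_2}_{\tilde p_j,q_j}$ need not exist ($\fracc{|\M|}{\tilde p_j}=\fracc{|\M|}{p_j}-s_j+s_2$ can be negative), and the reduced pair violates the standing hypothesis $s_0+s_1>|\M|(\fracc1{p_0}+\fracc1{p_1}-1)_+$ of Theorem~\ref{BBB-thm}, since its smoothness sum is $s_1+s_2$ resp.\ $s_0+s_2$; the general $s_2$ then follows by a saturated Sobolev embedding of the output, which preserves $o$. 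Second, reaching the vertex of the reduced pair is still an endpoint statement, so it uses the endpoint cases \eqref{4.16} of Theorem~\ref{BBB-thm}, not ``only the generic parts \eqref{4.8'}--\eqref{4.10'}''; what saves you is that the proviso $q_j<t_j$ there is tied to equal smoothness indices, hence vacuous for the reduced pair.

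The genuine gap is case (2c): the $L_\nu$ detour cannot produce $o\ge q_2:=(\fracc1{q_0}+\fracc1{q_1})^{-1}$. For one thing, the preliminary ``Sobolev reduction to $\fracc1{q_0}+\fracc1{q_1}\ge1$'' does not exist in the Besov scale: your target lies exactly on the diagonal $s-\fracc{|\M|}{p}=(s_0-\fracc{|\M|}{p_0})+(s_1-\fracc{|\M|}{p_1})$, so any pre-embedding of the factors must preserve their Sobolev invariants, and by \eqref{1.20} a saturated Besov-to-Besov embedding can only \emph{increase} the sum exponents; if, say, $q_0=q_1=4$, no exponents $r_j\ge q_j$ with $\fracc1{r_0}+\fracc1{r_1}\ge1$ exist. (The analogous reduction in Section~\ref{indp-ssect} is available only for $F$-spaces, whose Sobolev embeddings \eqref{1.20'}, \eqref{1.21} carry no sum-exponent conditions.) For another, even when $q_2\le1$, passing through $L_\nu$ destroys precisely the information to be proved: at the saturated index one has $L_\nu\hookrightarrow B^{\Mi,s_2}_{r,o}$ only for $o\ge\nu$ (isotropically, by Jawerth's embedding in \eqref{1.21}), resp.\ $o\ge\nu+\varepsilon$ or $o=\infty$ in general---this loss is exactly what produces the endpoint exponents $\max(q_1,p+\varepsilon)$ and $\infty$ in \eqref{4.47} and Theorem~\ref{BBB3-thm}---and never for $o\ge q_2$, which can be $<1\le\nu$ (take $q_0=q_1=1$). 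The tool that actually yields the harmonic mean, and the one the paper uses, is the negative-smoothness estimate \eqref{3.18} and its $\pi_3$ analogue: since $s_0<\fracc{|\M|}{p_0}$ (resp.\ $s_1<\fracc{|\M|}{p_1}$), a saturated Sobolev embedding carries one factor to $B^{\Mi,-\varepsilon}_{r,q_0}$ (resp.\ $B^{\Mi,-\varepsilon}_{r,q_1}$) with its sum exponent intact, whereupon $\pi_1$ (resp.\ $\pi_3$) lands exactly on the diagonal with sum exponent $q_2$, while $\pi_2$ already has this by \eqref{3.16}. Your premise that a componentwise treatment ``only yields the sum-exponent $\max(q_0,q_1)$'' overlooks these estimates; with them, no $L_\nu$ routing is needed.
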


\begin{proof} Formula \eqref{4.31} is obtained from
Theorem~\eqref{BBB-thm} by means of embeddings,
cf.~Figure~\ref{embd-fig}. Hence it remains to show the sufficiency of
the conditions (1a)--(2c). 

In case (1a) the receiving space $A_2=B^{\Mi,s_2}_{r,o}$ may be
obtained for $\fracc{|\M|}{r}=\frac{|\M|}{p^*_1}-s_1+s_2$ and
$o=\max(q_0,q_1)$ by means of a Sobolev embedding of the space covered
by Theorem~\ref{BBB-thm}. To obtain $o$ as low as $q_0$ it suffices to
improve the estimate of $\pi_1$, for the $\pi_2$ estimate gives
$o=(\fracc1{q_0}+\fracc1{q_1})^{-1}$ or better. 

It is enough to treat $s_2=s_1-\varepsilon$ for
arbitrarily small $\varepsilon>0$. By \eqref{4.8'} 
 \begin{equation}
 A_0\oplus A_1\hookrightarrow
 B^{\Mi,s_0}_{p_0,q_0}\oplus B^{\Mi,s_1-\varepsilon}_{p_1,q_0}
 \xrightarrow{\;\pi_1\;}
 \bigcap\bigl\{\,B^{\Mi,s_2}_{r,q_0}\bigm|
   \fracc{|\M|}{p_1}< \fracc{|\M|}{r}\le
   \fracc{|\M|}{p_0}+\fracc{|\M|}{p_1} \,\bigr\}.
 \label{4.35} 
 \end{equation}
Taking $\varepsilon$ so small that $s_1-\varepsilon-\fracc{|\M|}{p_1}>
s_0-\fracc{|\M|}{p_0}$ the value $\fracc{|\M|}{r}=
\frac{|\M|}{p^*_1}-\varepsilon=\fracc{|\M|}{p_0}-s_0+s_1-\varepsilon$ 
is included on the right hand side. This proves (1a) and (1b) is
evident from Theorem~\ref{BBB-thm}.

For the treatment of (1c) it is necessary to modify the estimate of
$\pi_3(\cdot,\cdot)$. Consider $s_2=s_1-\varepsilon$ for small 
$\varepsilon\in\,]0,\fracc{|\M|}{p_1}-s_1]$ and take
$r\in\,]p_1,\infty]$ such that
$s_1-\fracc{|\M|}{p_1}=-\varepsilon-\fracc{|\M|}{r}$. Then
$B^{\Mi,s_1}_{p_1,q_1}\hookrightarrow B^{\Mi,-\varepsilon}_{r,q_1}$.
From the last statement in Theorem~\ref{basic-thm} it is seen that
$\pi_3$ maps $B^{\Mi,s_0}_{p_0,q_0}\oplus B^{\Mi,-\varepsilon}_{r,q_1}$ 
into $B^{\Mi,s_0-\varepsilon}_{r_0,o}$ when
$\fracc{|\M|}{r_0}=\fracc{|\M|}{p_0}+\fracc{|\M|}{r}$ and
$o=(\fracc1{q_0}+\fracc1{q_1})^{-1}$. But
 \begin{equation}
 s_0-\varepsilon-\fracc{|\M|}{r_0}=s_0-\fracc{|\M|}{p_0}+
 s_1-\fracc{|\M|}{p_1}\ge
 s_1-\fracc{|\M|}{p_1}=s_2-(\tfrac{|\M|}{p^*_1}-s_1+s_2),
 \label{4.36}
 \end{equation}
so $\pi_3$ maps $B^{\Mi,s_0}_{p_0,q_0}\oplus B^{\Mi,s_1}_{p_1,q_1}$
into $B^{\Mi,s_2}_{r,q_0}$ for
$\fracc{|\M|}{r}=\frac{|\M|}{p^*_1}-s_1+s_2$.

For (2a) the treatment of $\pi_1$ in case (1a) is easily modified by
taking $\varepsilon$ such that $s_1-\varepsilon>\fracc{|\M|}{p_1}$. In
(2b) one can handle $\pi_1$ by means of \eqref{3.18}: For
$s_2=s_1-\varepsilon$ with a small
$\varepsilon\in\,]0,\fracc{|\M|}{p_0}-s_0]$ the number
$r\in\,]p_0,\infty]$ such that
$s_0-\fracc{|\M|}{p_0}=-\varepsilon-\fracc{|\M|}{r}$ is considered.
Then $\pi_1(B^{\Mi,s_0}_{p_0,q_0}\oplus B^{\Mi,s_1}_{p_1,q_1})\subset
B^{\Mi,s_1-\varepsilon}_{r_0,o}$ for $\fracc{|\M|}{r_0}=
\fracc{|\M|}{p_1}+\fracc{|\M|}{r}$ and
$o=(\fracc1{q_0}+\fracc1{q_1})^{-1}$. Here $\fracc{|\M|}{r_0}=
\frac{|\M|}{p^*_1}-\varepsilon$ is equal to the value of 
$\fracc{|\M|}{r}$ pertinent to \eqref{4.34}. 

In (2c) the case with $s_1<0$ is easy concerning $\pi_3$ since we
already have a receiving space (for this operator) with sum-exponent
$(\fracc1{q_0}+\fracc1{q_1})^{-1}$, cf.~\eqref{4.10e}. Observe that
$\pi_1$ can be treated as in case (2b), and that for  $s_1\ge0$ one
may treat $\pi_3$ along these lines too. Indeed, if
$\varepsilon\in\,]0,\fracc{|\M|}{p_1}-s_1]$ and
$s_1-\fracc{|\M|}{p_1}=-\varepsilon-\fracc{|\M|}{r}$ it is found that
$\pi_1(B^{\Mi,s_0}_{p_0,q_0}\oplus B^{\Mi,s_1}_{p_1,q_1})\subset
B^{\Mi,s_0-\varepsilon}_{r_0,o}$ for $\fracc{|\M|}{r_0}=
\fracc{|\M|}{p_0}+\fracc{|\M|}{r}$. Here
$\fracc{|\M|}{r_0}=\fracc{|\M|}{r_2}-\varepsilon+s_0-s_1$, and hence
$B^{\Mi,s_0-\varepsilon}_{r_0,o}\hookrightarrow 
B^{\Mi,s_1-\varepsilon}_{r,o}$ for
$\fracc{|\M|}{r}=\frac{|\M|}{p^*_1}-\varepsilon$.
\end{proof}

The optimality of \eqref{4.31}--\eqref{4.34} is seen from
(4)--(7), $(5')$ and $(6')$ in Theorem~\ref{ncss-thm}, 
cf.~also \eqref{2.31}.

Thus Theorem~\ref{BBB2-thm} gives a complete description of the
$(s_2,p_2,q_2)\in\Pl(B^{\Mi,s_0}_{p_0,q_0},
B^{\Mi,s_1}_{p_1,q_1})$ with $s_2<\min(s_0,s_1)$ when
$(s_j,p_j,q_j)_{j=0,1} \in \Dm(\pi)$.

\bigskip

The borderline cases with
$s_0+s_1=(\fracc{|\M|}{p_0}+\fracc{|\M|}{p_1}-|\M|)_+$ are now given a
treatment analogous to the one in Section~\ref{FFF-ssect}.

\begin{thm} \label{BBB3-thm}
Let $(s,p_0,q_0)$ and $(-s,p_1,q_1)$ satisfy the three inequalities $s>0$,
$1\ge\fracc1{p_0}+\fracc1{p_1}$ and $\fracc1{q_0}+\fracc1{q_1}\ge 1$,
and recall \eqref{4.4'}. Then $\pi(\cdot,\cdot)$ is bounded
 \begin{equation}
 B^{\Mi,s}_{p_0,q_0}\oplus B^{\Mi,-s}_{p_1,q_1}
 \xrightarrow{\;\pi(\cdot,\cdot)\;}
 \bigcap\bigl\{\,B^{\Mi,-s}_{r,q_1}\bigm| \tfrac{|\M|}{p^*_1}
         <\fracc{|\M|}{r}\le \fracc{|\M|}{p} \,\bigr\}.
 \label{4.43}
 \end{equation}
In \eqref{4.43} the space $B^{\Mi,-s}_{p^*_1,o}$ may be included if
$B^{\Mi,s}_{p_0,q_0}\hookrightarrow L_\infty$ 
holds together with one of the conditions 
 \begin{equation}
 \begin{alignedat}{2}
 &\text{\rom{(1a)}}&\quad s&>\fracc{|\M|}{p_0} \text{ and } o\ge q_1,\\
 &\text{\rom{(1b)}}&\quad s&=\fracc{|\M|}{p_0},\ p>1 \text{ and }
  o\ge\max(q_1,p+\varepsilon) \text{ for $\varepsilon>0$},\\
 &\text{\rom{(1c)}}&\quad s&=\fracc{|\M|}{p_0},\ p=1 \text{ and } o=\infty,
 \end{alignedat}
 \label{4.45}
 \end{equation}
and if one of the following conditions (where
$t_0=|\M|(\fracc{|\M|}{p_0}-s)^{-1}$)
 \begin{equation}
 \begin{alignedat}{2}
 &\text{\rom{(2a)}}&\quad s&<\fracc{|\M|}{p_0},\ q_0<t_0,\ p>1
    \text{ and } o\ge \max(q_1,p+\varepsilon),\\
 &\text{\rom{(2b)}}&\quad s&<\fracc{|\M|}{p_0},\ q_0<t_0,\ p=1 \text{ and }
 o=\infty,
 \end{alignedat}
 \label{4.46}
 \end{equation} 
holds, the space $B^{\Mi,-s}_{p^*_1,o}$ can receive when 
$0<s<\fracc{|\M|}{p_0}$. 

It suffices with $\varepsilon=0$ above if either $p\ge2$ or
$\M=(1,\dots,1)$, and it suffices with $q_0\le t_0$ if either $t_0\le 2$
or $\M=(1,\dots,1)$.
\end{thm}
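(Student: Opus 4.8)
The plan is to use Lemma~\ref{pi-lem} to split $\pi=\pi_1+\pi_2+\pi_3$ and to estimate the three para-products separately, following the proof of Theorem~\ref{FFF3-thm} but attending to the Besov sum-exponents. Since $s_1=-s<0$, the last statement of Theorem~\ref{basic-thm} (formula~\eqref{3.18} with the factors interchanged) gives that $\pi_3$ maps $B^{\Mi,s}_{p_0,q_0}\oplus B^{\Mi,-s}_{p_1,q_1}$ continuously into $B^{\Mi,0}_{p,q_2}$, where $\frac{1}{q_2}=\frac{1}{q_0}+\frac{1}{q_1}$. For $\pi_2$, the identity $s_0+s_1=0$ lets one bound $\norm{u_{j-1}v_j+u_jv_j+u_jv_{j-1}}{L_p}$ summed over $j$ as in~\eqref{3.20}, by H\"older's inequality in $x$ together with the sequence inequality $\sum_j a_jb_j\le\norm{\{a_j\}}{\ell_{q_0}}\norm{\{b_j\}}{\ell_{q_1}}$; this is precisely where the hypothesis $\frac{1}{q_0}+\frac{1}{q_1}\ge1$ enters. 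As $\frac{1}{p_0}+\frac{1}{p_1}\le1$ forces $p\ge1$ (and hence $p_0\ge1$), completeness of $L_p$---equivalently Proposition~\ref{zero-prop}(1)---yields that $\pi_2$ maps into $L_p$. Finally $\pi_1$ has $s_0=s>0$, so I would first embed the leading factor, $B^{\Mi,s}_{p_0,q_0}\hookrightarrow L_{\tilde p_0}$ for a suitable $\tilde p_0\ge1$, and then apply~\eqref{3.8}.

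For the strict inclusion~\eqref{4.43} the images $L_p$ and $B^{\Mi,0}_{p,q_2}$ of $\pi_2$ and $\pi_3$ embed, via Sobolev and simple embeddings \eqref{1.19}--\eqref{1.20}, into $\bigcap\{B^{\Mi,-s}_{r,q_1}\mid \frac{|\M|}{p}-s<\frac{|\M|}{r}\le\frac{|\M|}{p}\}$; for $0<s\le\frac{|\M|}{p_0}$ the lower endpoint $\frac{|\M|}{p}-s$ equals $\frac{|\M|}{p^*_1}$, while for $s>\frac{|\M|}{p_0}$ it lies strictly below $\frac{|\M|}{p^*_1}=\frac{|\M|}{p_1}$, so in either case the whole range $\frac{|\M|}{p^*_1}<\frac{|\M|}{r}\le\frac{|\M|}{p}$ is covered. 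Because the lower bound is strict there is slack in both smoothness and integrability, so one always lands in the interior of a Sobolev segment and the target sum-exponent $q_1$ (which dominates $q_2$) is reached. The same range is attained by $\pi_1$ on choosing $B^{\Mi,s}_{p_0,q_0}\hookrightarrow L_{\tilde p_0}$ with $\frac{|\M|}{\tilde p_0}>\frac{|\M|}{p_0}-s$ in~\eqref{3.8}. This proves~\eqref{4.43}.

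The delicate point is the endpoint $\frac{|\M|}{r}=\frac{|\M|}{p^*_1}$. Under $B^{\Mi,s}_{p_0,q_0}\hookrightarrow L_\infty$ one has $p^*_1=p_1$, and $\pi_1$ maps through~\eqref{3.8} with $\tilde p_0=\infty$ into $B^{\Mi,-s}_{p_1,q_1}$, forcing $o\ge q_1$. For $s>\frac{|\M|}{p_0}$ the value $\frac{|\M|}{p_1}$ is strictly interior to the Sobolev range of the $\pi_2,\pi_3$ images, so no further constraint arises---this is (1a). For $s=\frac{|\M|}{p_0}$ the endpoint $\frac{|\M|}{p_1}$ coincides with the Sobolev-line endpoint $\frac{|\M|}{p}-s$ of the $L_p$-image, and the embedding $L_p\hookrightarrow B^{\Mi,-s}_{p^*_1,o}$ at equality in the Sobolev relation imposes $o\ge p+\varepsilon$ when $p>1$ (reducible to $o\ge p$ if $p\ge2$ or $\M=(1,\dots,1)$, using \eqref{1.19''} respectively the Franke--Jawerth embedding~\eqref{1.21}) and only $o=\infty$ when $p=1$; combined with $o\ge q_1$ this yields (1b), (1c). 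For $0<s<\frac{|\M|}{p_0}$ (so $\frac{|\M|}{p^*_1}=\frac{|\M|}{p}-s$) the same Sobolev-endpoint analysis governs $\pi_2,\pi_3$, while $\pi_1$ must now reach $\frac{|\M|}{\tilde p_0}=\frac{|\M|}{p_0}-s$ in~\eqref{3.8} (again contributing sum-exponent $q_1$), i.e.\ the borderline embedding $B^{\Mi,s}_{p_0,q_0}\hookrightarrow L_{t_0}$ of~\eqref{1.25}, which holds exactly under $q_0<t_0$ (or $q_0\le t_0$ if $t_0\le2$ or $\M=(1,\dots,1)$); here $\tilde p_0\ge1$ because $p_0\ge1$ and $s>0$. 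This gives (2a), (2b).

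The main obstacle I anticipate is exactly this endpoint bookkeeping: verifying that at $\frac{|\M|}{r}=\frac{|\M|}{p^*_1}$ the $L_p$-image embeds into $B^{\Mi,-s}_{p^*_1,o}$ with the sharp sum-exponent, and matching the two anisotropic-versus-isotropic thresholds ($p+\varepsilon$ against $p$, and $q_0<t_0$ against $q_0\le t_0$) to the correct clauses of~\eqref{1.21} and~\eqref{1.25}. Optimality of all the receiving spaces then follows from (4)--(7), $(5')$ and $(6')$ of Theorem~\ref{ncss-thm} together with~\eqref{2.31}, as in the $FFF$ cases.
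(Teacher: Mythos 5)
Your proof is correct and takes essentially the same route as the paper's: the paper's (very terse) proof consists precisely of the chain $\pi_2\colon B^{\Mi,s}_{p_0,q_0}\oplus B^{\Mi,-s}_{p_1,q_1}\to L_p\hookrightarrow B^{\Mi,-s}_{r_0,p+\varepsilon}$ (resp.\ $B^{\Mi,-s}_{r_0,\infty}$ for $p=1$) recorded in \eqref{4.47}, with $\pi_1$, $\pi_3$ and the endpoint bookkeeping handled exactly as you do\,---\,via \eqref{3.8}, the last statement of Theorem~\ref{basic-thm} together with Proposition~\ref{zero-prop}, and the borderline embeddings \eqref{1.21} and \eqref{1.25}\,---\,by carrying over the proof of Theorem~\ref{FFF3-thm}. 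Only your closing sentence overreaches: the paper explicitly notes after this proof that it is \emph{not} clear whether the sum-exponents $o$ in (1a)--(1c) and (2a)--(2b) are optimal, so optimality of the receiving spaces does not simply follow from Theorem~\ref{ncss-thm}.
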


\begin{proof} The property analogous to the one in \eqref{4.13'} is  
 \begin{equation}
 B^{\Mi,s}_{p_0,q_0}\oplus B^{\Mi,-s}_{p_1,q_1}\xrightarrow{\;\pi_2(\cdot,\cdot)\;}
 L_p\hookrightarrow 
 \begin{cases}B^{\Mi,-s}_{r_0,p+\varepsilon} &\quad\text{for $p>1$},\\
 B^{\Mi,-s}_{r_0,\infty} &\quad\text{for $p=1$},  \end{cases}
 \label{4.47}
 \end{equation}
for $\varepsilon>0$; even $\varepsilon=0$ is possible if either
$p\ge2$ or $\M=(1,\dots,1)$. 
\end{proof}

It is not clear whether the sum-exponents $o$ in 
(1a)--(1c) and (2a)--(2b) are optimal. For this
reason, we shall not treat $A_2$ spaces with
$s_2<\min(s_0,s_1)$ here; results may be obtained by use of the same
methods as for Theorem~\ref{BBB2-thm} when needed.

\begin{thm} \label{BBB4-thm}
Let $s_0+s_1=\fracc{|\M|}{p_0}+\fracc{|\M|}{p_1}-|\M|>0$ and suppose
that $s_1<0$ and $\fracc1{q_0}+\fracc1{q_1}\ge 1$. 
Then $\pi(\cdot,\cdot)$ is bounded
 \begin{equation}
 B^{\Mi,s_0}_{p_0,q_0}\oplus B^{\Mi,s_1}_{p_1,q_1}
 \xrightarrow{\;\pi(\cdot,\cdot)\;}
 \bigcap\bigl\{\,B^{\Mi,s_1}_{r,q_1}\bigm| \tfrac{|\M|}{p^*_1}
         <\fracc{|\M|}{r}\le |\M| \,\bigr\}.
 \label{4.48}
 \end{equation}
Moreover, when $B^{\Mi,s_0}_{p_0,q_0}\hookrightarrow L_\infty$
the space $B^{\Mi,s_1}_{p_1,q_1}$ can receive in \eqref{4.48} for
$s_0>\fracc{|\M|}{p_0}$, while $B^{\Mi,s_1}_{p_1,\infty}$ can do so
for $s_0=\fracc{|\M|}{p_0}$. 

Similarly $B^{\Mi,s_1}_{p^*_1,\infty}$ may be included when
$0<s_0<\fracc{|\M|}{p_0}$ holds in addition to
$q_0<t_0:=|\M|(\fracc{|\M|}{p_0}-s_0)^{-1}$ (or just $q_0\le t_0$ if
$t_0\le 2$ or if $\M=(1,\dots,1)$).
\end{thm}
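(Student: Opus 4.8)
The plan is to follow the proof of Theorem~\ref{FFF4-thm}, handling the three para-multiplication operators $\pi_1$, $\pi_2$ and $\pi_3$ separately and reassembling by Lemma~\ref{pi-lem}; the only genuine difference from the $FFF$ case is that the Besov norms force one to track the sum-exponents, exactly as in the passage from Theorem~\ref{FFF3-thm} to Theorem~\ref{BBB3-thm}. First I would record the arithmetic consequences of the borderline hypothesis: from $s_0+s_1=\fracc{|\M|}{p_0}+\fracc{|\M|}{p_1}-|\M|>0$ one gets $\fracc1{p_0}+\fracc1{p_1}>1$, so $p<1$, and the number $\fracc{|\M|}{r_0}:=\fracc{|\M|}{p_1}+\fracc{|\M|}{p_0}-s_0$ simplifies to $s_1+|\M|$. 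In particular $\fracc{|\M|}{r_0}=\frac{|\M|}{p^*_1}$ when $0<s_0\le\fracc{|\M|}{p_0}$, whereas $p^*_1=p_1$ in the case $B^{\Mi,s_0}_{p_0,q_0}\hookrightarrow L_\infty$.

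The heart of the matter is the high--high operator $\pi_2$. Estimating the terms $u_{j-1}v_j+u_jv_j+u_jv_{j-1}$ by H\"older's inequality as in \eqref{3.21} gives $\Norm{\{2^{s_2j}\norm{u_jv_j}{L_p}\}}{\ell_q}\le c\norm{u}{B^{\Mi,s_0}_{p_0,q_0}}\norm{v}{B^{\Mi,s_1}_{p_1,q_1}}$ with $\fracc1q=\fracc1{q_0}+\fracc1{q_1}$ and $s_2=s_0+s_1=\fracc{|\M|}p-|\M|$. Because the hypothesis $\fracc1{q_0}+\fracc1{q_1}\ge1$ means $q\le1$, and because the spectra satisfy \eqref{3.6}, Proposition~\ref{border-prop}(1) applies and furnishes convergence of $\pi_2(u,v)$ in $L_1$ with the corresponding bound. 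Then $L_1\hookrightarrow B^{\Mi,0}_{1,\infty}$, and the Sobolev embedding \eqref{1.20}\,---\,whose smoothness gap is an equality only at $\fracc{|\M|}{r_0}=s_1+|\M|$ and is strict for $r<r_0$\,---\,yields $B^{\Mi,s_1}_{r,q_1}$ for every $r$ with $1\le r<r_0$, i.e.\ $\frac{|\M|}{p^*_1}<\fracc{|\M|}r\le|\M|$. This is the binding restriction and produces the upper bound $\fracc{|\M|}r\le|\M|$ in \eqref{4.48}.

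For the remaining two operators the sign $s_1<0$ does the work. By the last assertion of Theorem~\ref{basic-thm}, $\pi_3$ maps $B^{\Mi,s_0}_{p_0,q_0}\oplus B^{\Mi,s_1}_{p_1,q_1}$ boundedly into $B^{\Mi,s_0+s_1}_{p,q_2}$, which I would lower to smoothness $s_1$ by a Sobolev embedding, reaching all $B^{\Mi,s_1}_{r,q_1}$ with $\frac{|\M|}{p^*_1}<\fracc{|\M|}r\le\fracc{|\M|}p$; the low--high operator $\pi_1$ is treated through \eqref{3.8} and \eqref{3.10} after embedding $B^{\Mi,s_0}_{p_0,q_0}$ into a suitable $L_{t_0}$ or $F^{\Mi,0}_{t_0,1}$ by means of \eqref{1.25}, which again descends to $\fracc{|\M|}r=\frac{|\M|}{p^*_1}$. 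Since both $\pi_1$ and $\pi_3$ reach at least as far as $\pi_2$, intersecting the three ranges gives \eqref{4.48}.

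The endpoint inclusions at $\fracc{|\M|}r=\frac{|\M|}{p^*_1}$ form the delicate part, and I expect the sum-exponent bookkeeping there to be the main obstacle. When $B^{\Mi,s_0}_{p_0,q_0}\hookrightarrow L_\infty$ the endpoint $\fracc{|\M|}r=\fracc{|\M|}{p_1}$ is supplied by the $\pi_3$-endpoint clauses \eqref{4.10a}--\eqref{4.10e} of Corollary~\ref{basic-cor}, which yield sum-exponent $q_1$ for $s_0>\fracc{|\M|}{p_0}$ but only $\infty$ for $s_0=\fracc{|\M|}{p_0}$ (through \eqref{4.10e}); one must check in parallel that $\pi_1$ and $\pi_2$ reach the same endpoint, the latter via the term $B^{\Mi,s_1}_{r_0,\infty}$ already found. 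For the case $0<s_0<\fracc{|\M|}{p_0}$ with $q_0<t_0$ I would run the $\varepsilon$-shift argument used for \eqref{4.20} in Theorem~\ref{BBB-thm}: embed $B^{\Mi,s_0}_{p_0,q_0}\hookrightarrow B^{\Mi,s_0-\varepsilon}_{r_\varepsilon,o}$ and apply the interior estimate to land at $\fracc{|\M|}r=\frac{|\M|}{p^*_1}$ with sum-exponent $\infty$. The care needed is that all three para-products must meet this common endpoint with a compatible sum-exponent; confirming that the value is forced to be $\infty$ rather than something smaller, and that the stated $L_\infty$- and $q_0<t_0$-conditions are exactly those demanded by the Sobolev embeddings \eqref{1.21}, is where the routine but fiddly work lies. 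The concluding relaxation to $q_0\le t_0$ when $t_0\le2$ or $\M=(1,\dots,1)$ follows from the sharpened embeddings recorded in \eqref{1.25} and \eqref{1.21}.
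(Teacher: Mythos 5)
Your overall route is the paper's: the paper also sends $\pi_2$ through the H\"older estimate and Proposition~\ref{border-prop} into $L_1$, and then uses $L_1\hookrightarrow B^{\Mi,s_1}_{r_0,\infty}\cap\bigcap\{\,B^{\Mi,s_1}_{r,q_1}\mid r_0>r\ge1\,\}$ (this is exactly \eqref{4.50}), while $\pi_1$ and $\pi_3$ are handled, as in Theorems~\ref{FFF3-thm} and~\ref{FFF4-thm}, by \eqref{3.8}/\eqref{3.10} and the final ($s_1<0$) statement of Theorem~\ref{basic-thm} plus embeddings. However, two of your steps do not stand as written. First, you cannot quote the endpoint clauses of Corollary~\ref{basic-cor}: that corollary carries the standing hypothesis $s_0+s_1>|\M|(\fracc1{p_0}+\fracc1{p_1}-1)_+$, and here the borderline assumption gives \emph{equality} (indeed $\fracc1{p_0}+\fracc1{p_1}>1$, so $s_0+s_1=|\M|(\fracc1{p_0}+\fracc1{p_1}-1)_+$). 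The clause you actually need, \eqref{4.10e}, does survive, because its proof uses only the last statement of Theorem~\ref{basic-thm} (valid for every $s_1<0$, with no condition on $s_0+s_1$) followed by the equality case of the Sobolev embedding \eqref{1.20}; but this has to be argued, not cited as a black box.

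Second, the $\varepsilon$-shift you propose for the endpoint in the case $0<s_0<\fracc{|\M|}{p_0}$, $q_0<t_0$, cannot work. An embedding $B^{\Mi,s_0}_{p_0,q_0}\hookrightarrow B^{\Mi,s_0-\varepsilon}_{r_\varepsilon,o}$ along the Sobolev line leaves $s_0-\fracc{|\M|}{p_0}$, hence also $\tfrac{|\M|}{p^*_1}=\fracc{|\M|}{p_0}+\fracc{|\M|}{p_1}-s_0$, unchanged, so any ``interior'' estimate applied to the shifted pair still produces only the strict inequality $\fracc{|\M|}{r}>\tfrac{|\M|}{p^*_1}$; and shifting off the Sobolev line makes the shifted pair violate the necessary condition (1) of Theorem~\ref{ncss-thm}, so no boundedness at all can be factored through it. The device that does reach the endpoint is the one you already wrote down for the generic part: $q_0<t_0$ (or $q_0\le t_0$ with $t_0\le2$ or $\M=(1,\dots,1)$) gives $B^{\Mi,s_0}_{p_0,q_0}\hookrightarrow L_{t_0}$ by \eqref{1.25} ff., where $t_0>1$ here because $\fracc{|\M|}{t_0}=\fracc{|\M|}{p_0}-s_0=|\M|+s_1-\fracc{|\M|}{p_1}<|\M|$, and then \eqref{3.8} lands $\pi_1$ exactly at $\fracc{|\M|}{r}=\fracc{|\M|}{t_0}+\fracc{|\M|}{p_1}=\tfrac{|\M|}{p^*_1}$ with sum-exponent $q_1$. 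Intersecting this with the $\pi_2$-term $B^{\Mi,s_1}_{p^*_1,\infty}$ from \eqref{4.50} and the $\pi_3$-term $B^{\Mi,s_1}_{p^*_1,q_2}$ (where $\fracc1{q_2}=\fracc1{q_0}+\fracc1{q_1}$) gives the asserted receiver $B^{\Mi,s_1}_{p^*_1,\infty}$; note in particular that it is the $\pi_2$ contribution, not \eqref{4.10e}, that forces the sum-exponent $\infty$. With these two repairs your argument coincides with the paper's.
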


\begin{proof} For the treatment of $\pi_2$ one can use the continuity of
 \begin{equation}
 B^{\Mi,s_0}_{p_0,q_0}\oplus B^{\Mi,s_1}_{p_1,q_1}\xrightarrow{\;\pi_2\;}
 L_1\hookrightarrow B^{\Mi,s_1}_{r_0,\infty}\cap 
 (\bigcap\bigl\{\,B^{\Mi,s_1}_{r,q_1}\bigm| r_0> r\ge 1\,\bigr\} ),
 \label{4.50}
 \end{equation}
that one has in view of the condition $\fracc1{q_0}+\fracc1{q_1}\ge 1$,
cf.~Proposition~\ref{border-prop}. 
\end{proof}

It is seen by use of embeddings that all the borderline cases with 
 \begin{equation}
 s_0+s_1=\fracc{|\M|}{p_0}+\fracc{|\M|}{p_1}-|\M|\quad
 \text{and}\quad \fracc1{q_0}+\fracc1{q_1}\ge 1
 \label{4.51}
 \end{equation}
allow application of $\pi(\cdot,\cdot)$, that is, \eqref{4.51} implies
that $(s_j,p_j,q_j)_{j=0,1}\in \Dm(\pi,BB)$.

However, for $0\le s_1\le s_0$ the resulting $A_2$ spaces with
$s_2<0\le\min(s_0,s_1)$ are not optimal, cf.~\cite[Thm.~4.1]{Ama}.

\bigskip

Altogether we have now in Sections~\ref{FFF-ssect} and \ref{BBB-ssect}
given a fairly complete description of $\Dm(\pi,BB)$ and
$\Dm(\pi,FF)$. In fact, using \eqref{4.4} in Theorem~\ref{FFF-thm} for
the generic $FF\bigdot$ cases, \eqref{4.4-3} for the cases with
$s_0+s_1=0$ and \eqref{4.5-4''} ff.\,---\,
and analogously for the $BB\bigdot$ cases\,---\,we have found 

\begin{cor} \label{dscr-cor}
When $\max(s_0,s_1)>0$ in the $BB\bigdot$ or in the $FF\bigdot$ cases,
the simultaneous fulfilment of the conditions \rom{(1)}, \rom{($1'$)},
\rom{(2)} and \rom{($2'$)} in Theorem~\ref{ncss-thm} is necessary and 
sufficient for the boundedness of $\pi(\cdot,\cdot)$ in \eqref{1.6} .
\end{cor}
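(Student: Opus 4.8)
The plan is to read this corollary as the identification of two parameter sets: the region $R$ cut out by conditions (1), ($1'$), (2) and ($2'$) of Theorem~\ref{ncss-thm}, and the domain $\Dm(\pi,FF)$ (respectively $\Dm(\pi,BB)$), under the standing assumption $\max(s_0,s_1)>0$. By commutativity of $\pi$ I would take $s_0\ge s_1$, so that $\max(s_0,s_1)=s_0>0$. The necessity half, $\Dm(\pi,FF)\subset R$, is already available: if $\pi\colon A_0\oplus A_1\to A_2$ is bounded then \eqref{2.13} holds for $f,g\in\cal S(\Rn)$, since these lie in $\cal O_M(\Rn)$ and $\pi(f,g)=fg$ there by Proposition~\ref{slw-prop}; Theorem~\ref{ncss-thm} then yields (1), ($1'$), (2) and ($2'$). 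Thus the entire task is the converse inclusion $R\subset\Dm$, that is, sufficiency.

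For sufficiency I would use that (1) and (2) together read $s_0+s_1\ge|\M|(\fracc1{p_0}+\fracc1{p_1}-1)_+$, and partition $R$ according to which inequality is saturated. First, if $s_0+s_1>|\M|(\fracc1{p_0}+\fracc1{p_1}-1)_+$ (the generic stratum $\Dm(\pi)$), then Theorem~\ref{FFF-thm}, resp.\ Theorem~\ref{BBB-thm}, produces a nonempty family of receiving spaces, so membership in $\Dm(\pi,FF)$, resp.\ $\Dm(\pi,BB)$, is immediate; here ($1'$) and ($2'$) are vacuous, their equality hypotheses failing.

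Two borderline strata remain. If $s_0+s_1=0$, then $s_0>0>s_1=-s_0$, while (1) forces $\fracc1{p_0}+\fracc1{p_1}\le1$ and ($2'$) forces $\fracc1{q_0}+\fracc1{q_1}\ge1$; these are exactly the three hypotheses of Theorem~\ref{FFF3-thm}, resp.\ Theorem~\ref{BBB3-thm}, which then applies. If instead $s_0+s_1=|\M|(\fracc1{p_0}+\fracc1{p_1}-1)>0$ (so $\fracc1{p_0}+\fracc1{p_1}>1$), I would invoke Theorem~\ref{FFF4-thm}, resp.\ Theorem~\ref{BBB4-thm}, when $s_1<0$, and the embedding arguments summarised at \eqref{4.5-4''}ff.\ (resp.\ \eqref{4.51}ff.) when $0\le s_1\le s_0$; in the Besov case the required hypothesis $\fracc1{q_0}+\fracc1{q_1}\ge1$ is supplied precisely by ($1'$), whereas in the Triebel--Lizorkin case ($1'$) carries no content, constraining only the $BB\bigdot$ and $BF\bigdot$ cases, and Theorem~\ref{FFF4-thm} needs no $q$-restriction.

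I expect the only genuine work to be bookkeeping: confirming that the three strata exhaust $R$, and that at each saturated inequality the necessary conditions ($1'$), ($2'$) coincide with the hypotheses of the matching borderline theorem. The one place where care is truly needed is the role of the hypothesis $\max(s_0,s_1)>0$: it serves exactly to exclude the corner $s_0=s_1=0$ from the stratum $s_0+s_1=0$, and that corner is the single case whose sufficiency is \emph{not} covered above, being tied to the open $L_p$-type questions following Proposition~\ref{border-prop}. The substantial analysis has already been expended in Theorems~\ref{FFF-thm}--\ref{BBB4-thm}; this corollary merely assembles them.
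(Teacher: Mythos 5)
Your proposal is correct and follows essentially the same route as the paper: necessity via Theorem~\ref{ncss-thm} (applied to Schwartz functions through Proposition~\ref{slw-prop}), and sufficiency by stratifying into the generic case (Theorems~\ref{FFF-thm}/\ref{BBB-thm}), the stratum $s_0+s_1=0$ (Theorems~\ref{FFF3-thm}/\ref{BBB3-thm}), and the stratum $s_0+s_1=|\M|(\fracc1{p_0}+\fracc1{p_1}-1)>0$ (Theorems~\ref{FFF4-thm}/\ref{BBB4-thm} together with \eqref{4.5-4''}~ff.\ and \eqref{4.51}~ff.), which is exactly the assembly the paper indicates before the corollary. The only blemish is a side remark: the excluded corner $s_0=s_1=0$ is tied in the paper to Remark~\ref{ST-rem} (the $p_j=1$ embedding issue) rather than to the open question after Proposition~\ref{border-prop}, but this does not affect the argument.
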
 

\begin{rem} \label{ST-rem}
The cases with $s_0=0=s_1$ are treated in the preprint \cite{ST},
where it is shown that $A_j\hookrightarrow L_{p_j}$ for $j=0$ and $1$
together with $L_p\hookrightarrow A$ must hold when \eqref{1.6} and
$s_0=0=s_1$ do so. By H\"older's inequality these embeddings conversely
imply \eqref{1.6} in this case, but the author is unable to follow the
present proof in \cite{ST} for the cases with $p_j=1$. Hence a complete 
description of $\Dm(\pi,BB)$ and $\Dm(\pi,FF)$ is left for the future.
\end{rem}

\subsection{The mixed cases}  \label{othr-ssect}
Concerning the remaining $BBF$, $BF\bigdot$, $FB\bigdot$ and $FFB$
cases we shall verify the sufficiency claimed above \eqref{4.2}:
Theorem \ref{FFF-thm} shows that
$(s_j,p_j,q_j)\in\Dm(\pi,FF)$ when
$(s_j,p_j,q_j)\in\Dm(\pi)$. But then
$(s_j,p_j,q_j)\in\Dm(\pi,\bigdot\bigdot)$, since there
is in any case a simple embedding $A_0\oplus A_1\hookrightarrow 
F^{\Mi,s_0-\varepsilon}_{p_0,\infty}\oplus 
F^{\Mi,s_1-\varepsilon}_{p_1,\infty}$,
where $(s_j-\varepsilon,p_j,\infty)_{j=0,1}\in\Dm(\pi)$ when
$\varepsilon>0$.
Thus $\Dm(\pi)\subset\Dm(\pi,\bigdot\bigdot)$.

In addition it is clear from this that (by taking $\varepsilon>0$
sufficiently small) $A_2$ can be obtained with any parameter
$(s_2,p_2,q_2)$ for which $s_2<s_1$,
$\fracc1{p_2}\le\fracc1{p_0}+\fracc1{p_1}$ and $s_2-\fracc{|\M|}{p_2}<
\min^+(s_j-\fracc{|\M|}{p_j})$, cf.~the interior of the
region sketched in Figure~\ref{A2-fig}.

\subsection{$\psi$-independence} \label{indp-ssect}
Strictly speaking, boundedness has in this section only been obtained
for $\pi_\psi(\cdot,\cdot)$ with $\psi$
as an arbitrary function entering in Definition~\ref{pi-defn}. 

For $(s_j,p_j,q_j)_{j=0,1}\in\Dm(\pi,\bigdot\bigdot)$ and
$(s_2,p_2,q_2)\in \Pl(A_0,A_1)$ we shall now verify that the action of
$\pi_\psi(\cdot,\cdot)\colon A_0\oplus A_1\to A_2$
does not depend on $\psi$ (except for a
$BF\bigdot$ borderline case with $p_1=1$, $q_0=q_1=\infty$ that is left open
for $\M\ne(1,\dots,1)$). 

\bigskip

When applying $\pi_\psi(\cdot,\cdot)$ to spaces with $(s_j,p_j,q_j)_{j=0,1}\in
\Dm(\pi)$ we may
assume that $q_0$ and $q_1<\infty$ so that either $\cal S(\Rn)$ or
$C^\infty(\Rn)$ is dense in $A_0$ and $A_1$. When $A_j=
B^{\Mi,s_j}_{\infty,q_j}(\Rn)$ for $j=0$ and $1$, 
it is found from Proposition~\ref{lp-prop} that 
$\pi_\psi(\cdot,\cdot)$ on $A_0\oplus A_1$ is an extension
by continuity of the $\psi$-independent restriction of
$\mu(\cdot,\cdot)$ to $C^\infty(\Rn)\times C^\infty(\Rn)$. 
The argument carries over to the situation where 
one or both of the $p_j<\infty$.

For the borderline cases with $s_0+s_1=0$ the inequality
$\fracc1{q_0}+\fracc1{q_1}\ge1$ assures that $q_j<\infty$ for $j=0$ or
$1$. So on a dense subset of $A_0\oplus A_1$ the bilinear
operator $\pi_\psi(\cdot,\cdot)$ coincides with a restriction
of the product on $\cal O_M\times\cal S'$ by Proposition~\ref{slw-prop}.

When $s_0+s_1=\fracc{|\M|}{p_0}+\fracc{|\M|}{p_1}-|\M|$ one can for
the $BB\bigdot$ cases use the argument above, since the inequality
$\fracc1{q_0}+\fracc1{q_1}\ge1$ holds according to
Theorem~\ref{ncss-thm}. When $A_j=F^{\Mi,s_j}_{p_j,q_j}$ for $j=0$ and
$1$, one can reduce to this situation: Let
$\varepsilon=s_0+s_1=\fracc{|\M|}{p_0}+\fracc{|\M|}{p_1}-|\M|$.

For $\varepsilon=0$ the argument for $s_0+s_1=0$ applies. When
$\varepsilon>0$ there is an embedding $A_j\hookrightarrow
B^{\Mi,t_j}_{r_j,r_j}$ with $t_j=s_j-\varepsilon_j$,
$\fracc{|\M|}{r_j}=\fracc{|\M|}{p_j}-\varepsilon_j$ and
$\varepsilon_0+\varepsilon_1<\varepsilon$. Because
 \begin{equation}
 t_0+t_1=\fracc{|\M|}{r_0}+\fracc{|\M|}{r_1}-|\M|=
 \varepsilon-\varepsilon_0-\varepsilon_1>0
 \label{6.51}
 \end{equation}
it is found for the sum-exponents of $B^{\Mi,t_j}_{r_j,r_j}$ that
$\fracc1{r_0}+\fracc1{r_1}\ge1$. In connection with Theorem~\ref{BBB4-thm}
this shows that $\pi(\cdot,\cdot)$ is
defined on $B^{\Mi,t_0}_{r_0,r_0}\oplus B^{\Mi,t_1}_{r_1,r_1}\supset A_0\oplus
A_1$, and the $\psi$-independence on $A_0\oplus A_1$ follows.

In the  $BF\bigdot$ cases the inequality
$\fracc1{q_0}+\fracc1{p_1}\ge1$ holds, so the
possibility $q_0=\infty$ is excluded for $p_1>1$; hence either $\cal
S$ or $C^\infty$ is dense in $A_0$, so Proposition~\ref{slw-prop}
applies. For arbitrary $p_1<\infty$ we may define $\varepsilon$ and
$\varepsilon_1$ as above (and take $\varepsilon_0=0$). When
$\varepsilon>0$ and $p_1<1$ a  small $\varepsilon_1$ yields $r_1<1$ so
that $\fracc1{q_0}+\fracc1{r_1}>1$ and $\psi$-independence follows.
The case $p_1=1$ poses a problem only when $q_0=\infty=q_1$, but for
$\M=(1,\dots,1)$ one may use that 
$F^{\Mi,s_1}_{1,\infty} \hookrightarrow B^{\Mi,t}_{r,1}$ for
$s_1-|\M|=t-\fracc{|\M|}{r}$ when $r>1$ and note that
$\pi(\cdot,\cdot)$ is defined on $B^{\Mi,s_0}_{p_0,q_0}\oplus
B^{\Mi,t}_{r,1}$.

Altogether this shows the $\psi$-independence (with the mentioned
exception), and thus the formulation in Section~\ref{suff-sect} of
results for $\pi(\cdot,\cdot)$ has been justified.

\section{Multiplication on open sets} \label{subset-sect}
For an arbitrary open set $\Omega\subset\Rn$ the product
$\pi_{\Omega}(\cdot,\cdot)$ is defined by lifting to $\Rn$. 

\begin{defn} \label{subset-defn} 
The {\em product\/} $\pi_\Omega(u,v)$ is defined for
$u$ and $v\in\cal D'(\Omega)$ when there exists $u'$ and $v'\in\cal
S'(\Rn)$ such that $r_\Omega u'=u$ and $r_\Omega v'=v$ and such that for
every $\psi\in C^\infty_0(\Rn)$ with $\psi(x)=1$ for $x$ in a
neighbourhood of $x=0$ the sequence 
 \begin{equation}
 r_\Omega\bigl(\cal F^{-1}(\psi_k\cal Fu')\cdot
               \cal F^{-1}(\psi_k\cal Fv')\bigr)
 \label{5.1}
 \end{equation}
converges in $\cal D'(\Omega)$ with a $\psi$-independent limit. 

In the affirmative case $\pi_\Omega(u,v)=\lim_{k\to\infty}r_\Omega
\cal F^{-1}(\psi_k\cal Fu')\cdot\cal F^{-1}(\psi_k\cal Fv')$.
\end{defn}

Naturally this definition needs to be justified. Let $u_j$
and $v_j\in\cal S'(\Rn)$ satisfy $r_\Omega u_j=u$ and
$r_\Omega v_j=v$ for $j=1$ and $2$ and let
$r_\Omega u_1^k v_1^k:=r_\Omega(u_1^k v_1^k)$ converge in 
$\cal D'(\Omega)$. Then one can use the identity
 \begin{equation}
 r_\Omega u_1^k v_1^k-r_\Omega u_2^k v_2^k=
 r_\Omega (u_1^k-u_2^k)v_1^k+r_\Omega u_2^k(v_1^k-v_2^k)
 \label{5.2}
 \end{equation}
to infer from Proposition \ref{local-prop} 
that also $r_\Omega u_2^k v_2^k$ converges, and that $\lim
r_\Omega u_2^k v_2^k$ is equal to $\lim r_\Omega u_1^k v_1^k$. 
Thus the existence of a limit is independent of how $u'$ and $v'$ are
chosen, and there is $\psi$-independence for every pair $(u',v')$ if
there is for one. Observe
also that in Definition~\ref{subset-defn} $\pi(u',v')$ need not be 
defined, and that we get back the definition of $\pi$ itself when $\Omega=\Rn$.

In the sequel, $B^{\Mi,s}_{p,q}(\overline{\Omega})=r_\Omega
B^{\Mi,s}_{p,q}(\Rn)$, e.g., is equipped with the infimum quasi-norm.

\begin{thm} \label{subset-thm} 
Let $\pi(\cdot,\cdot)\colon A_0\oplus A_1\to A_2$ be bounded for  
spaces $A_j$ that for $j=0$, $1$ and $2$ satisfy either 
$A_j=B^{\Mi,s_j}_{p_j,q_j}(\Rn)$ or $A_j=F^{\Mi,s_j}_{p_j,q_j}(\Rn)$.

When $\Omega\subset\Rn$ is open, one has boundedness of 
 \begin{equation}
 \pi_\Omega(\cdot,\cdot)\colon A_0(\overline{\Omega})\oplus
 A_1(\overline{\Omega})\to A_2(\overline{\Omega}).
 \label{5.3}
 \end{equation} 
Moreover, if $f\in r_\Omega(L_{p,\op{loc}}(\Rn)\cap
\cal S'(\Rn))$ and $g\in r_\Omega(L_{q,\op{loc}}(\Rn)\cap\cal
S'(\Rn))$ then
 \begin{equation}
 \pi_\Omega(f,g)=f(x)\cdot g(x)\in L_{r,\op{loc}}(\Omega),
 \label{5.4}
 \end{equation}
when $0\le\fracc1r=\fracp+\fracc1q\le1$.
\end{thm}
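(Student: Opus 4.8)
The plan is to reduce the whole statement to the results already proved on $\Rn$ by lifting, using the justification given immediately after Definition~\ref{subset-defn}. First I would observe that for \emph{any} lifts $u'\in A_0(\Rn)$ and $v'\in A_1(\Rn)$ with $r_\Omega u'=u$ and $r_\Omega v'=v$, the hypothesis that $\pi(\cdot,\cdot)\colon A_0\oplus A_1\to A_2$ is bounded guarantees that $\pi(u',v')$ exists in $A_2(\Rn)$, is $\psi$-independent, and satisfies $\cal F^{-1}(\psi_k\cal Fu')\cdot\cal F^{-1}(\psi_k\cal Fv')\to\pi(u',v')$ in $\cal S'(\Rn)$. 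Since $r_\Omega\colon\cal D'(\Rn)\to\cal D'(\Omega)$ is continuous, applying $r_\Omega$ to this convergence shows that the sequence \eqref{5.1} converges in $\cal D'(\Omega)$ to $r_\Omega\pi(u',v')$ with a $\psi$-independent limit. Hence $\pi_\Omega(u,v)$ is defined and
$$\pi_\Omega(u,v)=r_\Omega\pi(u',v').$$

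For the boundedness \eqref{5.3} I would then estimate using the infimum quasi-norm on $A_j(\overline{\Omega})$. Given $\varepsilon>0$, I choose lifts with $\norm{u'}{A_0}\le(1+\varepsilon)\norm{u}{A_0(\overline{\Omega})}$ and $\norm{v'}{A_1}\le(1+\varepsilon)\norm{v}{A_1(\overline{\Omega})}$. Because $\pi(u',v')$ is a lift of $r_\Omega\pi(u',v')$, the infimum quasi-norm satisfies $\norm{r_\Omega\pi(u',v')}{A_2(\overline{\Omega})}\le\norm{\pi(u',v')}{A_2}$, and the identity above together with the boundedness of $\pi$ on $\Rn$ gives
$$\norm{\pi_\Omega(u,v)}{A_2(\overline{\Omega})}\le\norm{\pi(u',v')}{A_2}\le c\norm{u'}{A_0}\norm{v'}{A_1}\le c(1+\varepsilon)^2\norm{u}{A_0(\overline{\Omega})}\norm{v}{A_1(\overline{\Omega})}.$$
Letting $\varepsilon\to0$ yields \eqref{5.3}.

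For the identity \eqref{5.4} the same lifting scheme applies, now invoking Proposition~\ref{lp-prop} in place of the abstract boundedness. I would pick $f'\in L_{p,\op{loc}}(\Rn)\cap\cal S'(\Rn)$ and $g'\in L_{q,\op{loc}}(\Rn)\cap\cal S'(\Rn)$ with $r_\Omega f'=f$ and $r_\Omega g'=g$. Since $0\le\fracp+\fracc1q=\fracc1r\le1$, Proposition~\ref{lp-prop} shows that $\pi(f',g')$ is defined and equals $f'\cdot g'\in L_{r,\op{loc}}(\Rn)$. By the first paragraph, $\pi_\Omega(f,g)=r_\Omega(f'\cdot g')$; and since pointwise multiplication is local, $r_\Omega(f'\cdot g')=f\cdot g$ a.e.\ in $\Omega$, whence $\pi_\Omega(f,g)=f(x)\cdot g(x)\in L_{r,\op{loc}}(\Omega)$.

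All the computations here are routine, so the step I would flag as the genuine content—the main obstacle—is the well-definedness of $\pi_\Omega(u,v)$: that the limit in \eqref{5.1} exists with a $\psi$-independent value and is \emph{independent of the chosen lifts}, so that the identity $\pi_\Omega(u,v)=r_\Omega\pi(u',v')$ is meaningful. This is precisely what the discussion following Definition~\ref{subset-defn} supplies, via Proposition~\ref{local-prop} applied to the difference identity \eqref{5.2}. Once that independence is in hand, both assertions collapse to the corresponding $\Rn$-statements composed with the contraction $r_\Omega$.
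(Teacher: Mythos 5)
Your proposal is correct and follows essentially the same route as the paper: both reduce to the $\Rn$-results by lifting, identify $\pi_\Omega(u,v)=r_\Omega\pi(u',v')$ via the lift- and $\psi$-independence established after Definition~\ref{subset-defn}, obtain \eqref{5.3} by passing to the infimum over lifts (your $(1+\varepsilon)$ argument is the same step), and deduce \eqref{5.4} from Proposition~\ref{lp-prop} applied to a lift of $f$ and $g$. The only cosmetic difference is that the paper applies Proposition~\ref{lp-prop} to the particular lifts $e_\Omega f$, $e_\Omega g$, whereas you use the arbitrary lifts furnished by the hypothesis, which works just as well.
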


\begin{proof} When $(u,v)\in A_0(\overline\Omega)\oplus
A_1(\overline\Omega)$ any lift $(u',v')\in A_0\oplus A_1$ admits
application of $\pi$, so a fortiori $\pi_\Omega(u,v)$ exists and
equals $r_\Omega\pi(u',v')$. From
 \begin{equation}
 \norm{\pi_\Omega(u,v)}{A_2(\overline\Omega)}\le
 \norm{\pi(u',v')}{A_2} \le
 \|\pi\|\norm{u'}{A_0}\norm{v'}{A_1}
 \label{5.5}
 \end{equation}
it follows by taking the infimum over $u'$ and $v'$ that 
$\pi_\Omega$ is bounded. Concerning $f$ and $g$ one can simply apply
Proposition \ref{lp-prop} to $e_\Omega f$ and $e_\Omega g$.
\end{proof}

In \cite{T2} and \cite{T3} there is not given any {\em precise\/} 
definition of 
the restriction to $\Omega$, but, because $\pi$ can not
be identified with $\mu$ in general, and because $q=\infty$ excludes
denseness of smooth functions, it is a point to show that
$\pi_\Omega(u,v)$ does not depend on the actual choice of the
lift. The approach used in Definition \ref{subset-defn} takes
care of this in a general way. (By doing it for each space 
$A_0(\overline{\Omega})\oplus A_1(\overline{\Omega})$ a consistency
problem arises, since $u$ and $v$ may belong to other spaces.)
 
In the particular case when $\Omega$ is of finite measure the
inclusion $L_r(\Omega)\hookrightarrow L_p(\Omega)$, valid for
$\infty\ge r\ge p>0$, carries over to the scales
$B^{\Mi,s}_{p,q}(\overline{\Omega})$ and 
$F^{\Mi,s}_{p,q}(\overline{\Omega})$ when $\Omega$ is suitably
`nice', cf.\ Lemma~\ref{reg-lem} below. 
Consequently\,---\,when the results of Section~\ref{suff-sect} are carried
over to $\pi_\Omega(\cdot,\cdot)$ by means of Theorem~\ref{subset-thm}
above\,---\,the restriction $\fracc{|\M|}{r}\le
\fracc{|\M|}{p_0}+\fracc{|\M|}{p_1}$ may be removed
when $\Omega$ is bounded, e.g.

This observation raises the question whether the other conditions in 
Theorem~\ref{ncss-thm} can be shown to hold for $\pi_\Omega$ or not.
The methods in Section \ref{ncss-sect} are only applicable for $\Omega=\Rn$, 
and it remains open how to proceed in general when $\Omega\ne\Rn$.
Even so it would be rather surprising if other modifications were necessary. 

\bigskip

In Lemma \ref{reg-lem} ff.\ below we address in particular the
embeddings $B^{\Mi,s}_{r,q}(\overline{\Omega})\hookrightarrow 
B^{\Mi,s}_{p,q}(\overline{\Omega})$ and 
$F^{\Mi,s}_{r,q}(\overline{\Omega})\hookrightarrow 
F^{\Mi,s}_{p,q}(\overline{\Omega})$ etc. that are shown to be valid
provided $\infty\ge r\ge p>0$  when $\Omega$ is a suitable set with
$\op{meas}(\Omega)<\infty$.

For isotropic spaces over a bounded $\Omega$ the results are  
identical to \cite[3.3.1]{T2}, 
except that $q=\infty$ is not included there in the $F$ case for $t=s$.
Seemingly the technique here gives a simpler proof of all cases with $r\ge p$.

\begin{lem}  \label{reg-lem} 
Let $\Omega\subset\Bbb R^n$ be an open bounded set, and let
$(s,p,q)$ and $(t,r,o)$ belong to $\Bbb R\times\left]0,\infty\right]
\times\left]0,\infty\right]$.

Then there is an embedding $B^{\Mi,t}_{r,o}(\overline\Omega)\hookrightarrow 
B^{\Mi,s}_{p,q}(\overline\Omega)$ when
 \begin{equation}
 \begin{gathered}
 \text{ $t\geq s$ and $\;t-\fracc{|\M|}r\ge s-\fracc{|\M|}p$ are satisfied},\\
 \text{together with $\;o\le q$ if $\;t=s$, or if 
 $\;t-\fracc{|\M|}r=s-\fracc{|\M|}p$};  
 \end{gathered}
 \label{4.5.7}
 \end{equation}
and when $r$ and $p<\infty$ and moreover
 \begin{equation}
 \begin{gathered}
 \text{ $t\ge s$ and $\;t-\fracc{|\M|}r\ge s-\fracc{|\M|}p$ are satisfied},\\
 \text{together with $\;o \le q$ if $\;t=s$}, 
 \end{gathered}
 \label{4.5.6} 
 \end{equation}     
an embedding $F^{\Mi,t}_{r,o}(\overline\Omega)
\hookrightarrow F^{\Mi,s}_{p,q}(\overline\Omega)$ exists.
\end{lem}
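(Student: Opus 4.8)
The plan is to reduce the assertion to a single \emph{base case}\,---\,the lowering of the integral-exponent at fixed smoothness and sum-exponent over $\overline\Omega$\,---\,and then to recover the general statement by composing with the linear embeddings over $\Rn$ recalled in Section~\ref{embd-ssect}. Writing $\sigma_0=t-\fracc{|\M|}{r}$ and $\sigma_1=s-\fracc{|\M|}{p}$, the hypotheses read $t\ge s$ and $\sigma_0\ge\sigma_1$. First I would dispose of the case $r\le p$: then $\sigma_0\ge\sigma_1$ is exactly the Sobolev condition, so \eqref{1.20} (resp. \eqref{1.20'} in the $F$ case), together with the simple embeddings \eqref{1.19}, \eqref{1.19'} when $r=p$, gives $B^{\Mi,t}_{r,o}(\Rn)\hookrightarrow B^{\Mi,s}_{p,q}(\Rn)$ directly on $\Rn$; restricting with $r_\Omega$ yields the claim, and the sum-exponent requirements in \eqref{4.5.7}/\eqref{4.5.6} are precisely those of \eqref{1.20}/\eqref{1.20'}. (Note that $t=s$ forces $r\ge p$, so this subcase contains no spurious instances.) When instead $r>p$, the differential condition $\sigma_0\ge\sigma_1$ is automatic from $t\ge s$, and the only genuine task is to drop the integral-exponent from $r$ to $p$; afterwards the smoothness and sum-exponent are adjusted by the simple embeddings \eqref{1.19}, \eqref{1.19'}, which account for the clause ``$o\le q$ if $t=s$''.

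Thus everything rests on the base case: for $\infty\ge r\ge p>0$ and $\meas(\Omega)<\infty$,
\[
 B^{\Mi,\sigma}_{r,o}(\overline\Omega)\hookrightarrow B^{\Mi,\sigma}_{p,o}(\overline\Omega),\qquad
 F^{\Mi,\sigma}_{r,o}(\overline\Omega)\hookrightarrow F^{\Mi,\sigma}_{p,o}(\overline\Omega).
\]
To prove this I would fix $u$ in the left-hand space, pick an extension $u'\in B^{\Mi,\sigma}_{r,o}(\Rn)$ with norm at most twice the infimum, and localise: choosing $\eta\in C^\infty_0(\Rn)$ equal to $1$ on a neighbourhood of the compact set $\overline\Omega$, the distribution $v:=\eta u'$ still restricts to $u$ on $\Omega$, is compactly supported, and satisfies $\norm{v}{B^{\Mi,\sigma}_{r,o}}\le c\norm{u'}{B^{\Mi,\sigma}_{r,o}}$ because multiplication by $\eta\in\cal O_M$ is bounded on $B^{\Mi,\sigma}_{r,o}$ (by Proposition~\ref{slw-prop} it coincides with $\pi(\eta,\cdot)$, and boundedness follows from Theorem~\ref{BBB-thm} on taking $\eta\in B^{\Mi,\tau}_{\infty,\infty}$ with $\tau$ large; likewise with Theorem~\ref{FFF-thm} in the $F$ case). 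It then suffices to show $\norm{v}{B^{\Mi,\sigma}_{p,o}}\le c\norm{v}{B^{\Mi,\sigma}_{r,o}}$ for such compactly supported $v$, since $v$ is an admissible extension of $u$.

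The concluding and main step is a splitting of physical space adapted to $\supp v$. Enclosing $\supp v$ in a ball $B$ and passing to its complement, on $B$ Hölder's inequality (finite measure) gives $\norm{v_j}{L_p(B)}\le\meas(B)^{\fracci1p-\fracci1r}\norm{v_j}{L_r}$, which reproduces the $B^{\Mi,\sigma}_{r,o}$-norm after insertion into the $\ell_o$-sum; in the $F$ case the same inequality is applied directly to the function $x\mapsto\norm{\{2^{\sigma j}v_j(x)\}}{\ell_o}$, so the sum-exponent $o$ is preserved verbatim. The hard part will be the complementary region, where $v_j=\cal F^{-1}\Phi_j\cal Fv$ is evaluated far from $\supp v$: using the uniform anisotropic Schwartz bounds $|D^\alpha\cal F^{-1}\Phi_j(x)|\le c_{N}2^{j(|\M|+\M\cdot\alpha)}(1+2^j[x])^{-N}$ and the finite order of the compactly supported $v$, I would estimate $|v_j(x)|\le c_{N}2^{-jN}[x]^{-N}$ for $[x]$ bounded below.

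For $N$ large this tail is integrable in $L_p$ over the far region and summable against $2^{\sigma j}$ in $\ell_o$, the total being dominated by a fixed seminorm of $v$ and hence by $\norm{v}{B^{\Mi,\sigma}_{r,o}}$ through the continuous embedding into $\cal S'$. Combining the two regions proves the base case, and with it the lemma. The one point demanding care is exactly this far-field decay estimate: the $j$-dependence of the kernels and the anisotropic weight $[x]$ must be tracked precisely so that the resulting series converges uniformly in the relevant norm, while the remaining reductions are routine applications of the embeddings already established.
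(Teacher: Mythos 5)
Your proposal is correct, but the heart of it---the base case of lowering the integral-exponent over $\overline\Omega$ at fixed smoothness and sum-exponent---is proved by a genuinely different argument from the paper's. Both proofs make the same top-level reduction (base case plus the $\Rn$-embeddings of Section~\ref{embd-ssect}, restricted by $r_\Omega$), but the paper establishes the base case by \emph{iterating its own $p_0\ne p_1$ product theorems}: it chooses nested bounded open sets $\Omega=\Omega_0\subset\overline\Omega_0\subset\Omega_1\subset\dots\subset\Omega_{N+1}$ and cutoffs $\psi_j\equiv1$ on $\Omega_j$ with $\supp\psi_j\subset\Omega_{j+1}$, notes that each $\psi_j$ lies in $F^{\Mi,s_j}_{2,2}$ (a space of \emph{finite} integral-exponent, which is the crucial point), and uses \eqref{4.5.11} to see that multiplication by $\psi_j$ maps $F^{\Mi,s}_{r_j,q}$ into $F^{\Mi,s}_{r_{j+1},q}$ with $\fracc1{r_{j+1}}=\fracc1{r_j}+\frac12$, while $\psi_k\cdots\psi_0w=\psi_0w$ leaves the restriction to $\Omega$ unchanged; after finitely many steps the exponent $p$ is reached. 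You instead invoke multiplication only once, by a single cutoff that \emph{preserves} the integral-exponent, merely to obtain a compactly supported representative, and then prove $\norm{v}{B^{\Mi,\sigma}_{p,o}}\le c\norm{v}{B^{\Mi,\sigma}_{r,o}}$ by hand via the near/far splitting (H\"older on a bounded set, kernel decay of $\check\Phi{}_j$ away from $\supp v$). Your route is more elementary and classical (close in spirit to \cite[3.3.1]{T2}); the paper's route is much shorter once the product theorems are in place, is precisely the ``theoretical application of the results for $p_0\ne p_1$'' advertised in Section~\ref{appl-sect}, and carries over verbatim to unbounded $\Omega$ of finite measure admitting the exhaustion \eqref{4.5.14} (Corollary~\ref{subset-cor}), where your compact-support argument is unavailable.

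Two points in your sketch need to be stated precisely. First, in the $F$ case the cutoff must be placed in a space of finite integral-exponent, say $\eta\in F^{\Mi,\tau}_{2,2}$ with $\tau$ large: Theorem~\ref{FFF-thm} assumes $p_0<\infty$, so ``likewise with Theorem~\ref{FFF-thm}'' does not apply literally to $\eta\in B^{\Mi,\tau}_{\infty,\infty}$ (this adjustment is exactly what the paper itself makes). Second, in the far-field estimate the constant in front of $2^{-jN}[x]^{-N}$ must be $c\norm{v}{B^{\Mi,\sigma}_{r,o}}$ with $c$ independent of $v$, not an uncontrolled constant attached to the individual compactly supported distribution $v$ (its order and distributional bounds are not a priori governed by the Besov norm). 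This is repaired by writing $v_j(x)=\ang{v,\chi\,\check\Phi{}_j(x-\cdot)}$ for one fixed $\chi\in C^\infty_0(\Rn)$ equal to $1$ near $\supp\eta$---so that all the $v$ under consideration are supported in a single compact set---and invoking the quantitative form of the embedding $B^{\Mi,\sigma}_{r,o}\hookrightarrow\cal S'(\Rn)$, namely $|\ang{v,\varphi}|\le c\norm{v}{B^{\Mi,\sigma}_{r,o}}\norm{\varphi}{\cal S,N}$ for some fixed $N$. With these adjustments your argument is complete.
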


\begin{proof} 
We show for $s\in\Bbb R$ and $\infty>r\ge p>0$ the existence of an embedding
 \begin{equation}
 F^{\Mi,s}_{r,q}(\overline\Omega)\hookrightarrow 
 F^{\Mi,s}_{p,q}(\overline\Omega),
 \quad\text{\em for any}\quad \,q\in\left]0,\infty\right],
 \label{4.5.10}
 \end{equation}
by use of the result that on $\Bbb R^n$ one has, cf.\ Section~\ref{suff-sect},
 \begin{equation}
 F^{\Mi,s_0}_{2,2}\oplus F^{\Mi,s}_{r,q}
  \xrightarrow{\;\pi(\cdot,\cdot)\;}
 \bigcap\{\,F^{\Mi,s}_{t,q}\mid \fracc1r\le \fracc1t\le \fracc1r+\tfrac12\,\},
 \label{4.5.11}
 \end{equation}
for $s_0>\fracc{|\M|}2$, when $s_0>s$ and 
$s_0+s>\max(0,\fracc{|\M|}r-\frac{|\M|}{2})$.

Let $N$ be a natural number such that $p\ge(\fracc1r+\frac{N}{2})^{-1}$
and let $\Omega_j$ be open bounded sets such that $\Omega=\Omega_0
\subset\overline\Omega_0\subset\Omega_1\subset\dots
\subset\overline\Omega_N\subset\Omega_{N+1}$. For each $j=0,\dots,N$
we pick $\psi_j\in C^\infty(\Bbb R^n)$ such that
$\psi_j\equiv 1$ on $\Omega_j,$ and $\operatorname{supp}\psi_j
\subset\Omega_{j+1}$.

Now let $v\in F^{\Mi,s}_{r,q}(\overline\Omega)$. Since $r_\Omega 
\psi_0w=v$ when $r_\Omega w=v$ and since $\psi_k\dots\psi_0w=\psi_0w$,
a repeated application of \eqref{4.5.11} gives, with $\fracc1{r_j}=
\fracc1r+\frac{j}{2}$, 
 \begin{equation}
 \begin{split}
 \norm{v}{F^{\Mi,s}_{r_{k+1},q}(\overline{\Omega})}
 \le&\inf\bigl\{\,\norm{\psi_k\dots\psi_0w}{F^{\Mi,s}_{r_{k+1},q}} 
         \bigm| w\in F^{\Mi,s}_{r,q},\quad r_\Omega w=v \,\bigr\}
 \\
 \le&\,c\norm{\psi_k}{F^{\Mi,s_k}_{2,2}}\,\dots
 \norm{\psi_0}{F^{\Mi,s_0}_{2,2}} 
 \norm{v}{ F^{\Mi,s}_{r,q}(\overline{\Omega})},
 \end{split}
 \label{4.5.13}
 \end{equation}
for some $c<\infty$, if $s_0,\dots,s_k$ are big enough. For some $k\le N$ 
it is even possible to take the $F^{\Mi,s}_{p,q}$-norm on the left hand
side of \eqref{4.5.13}, cf.\  \eqref{4.5.11} and the definition of $N$.

A similar procedure works for $t=s$ in the Besov-case. The full
statements now follow by use the embeddings in Section~\ref{embd-ssect}. 
\end{proof}

Obviously one could equally well work with sets $\Omega$ of finite
measure for which there exists open sets $\Omega_j$ and $\psi_j\in\cal
S(\Rn)$ such that
 \begin{equation}
  \begin{gathered}
  \Omega=\Omega_0\subset\dots\subset\Omega_j\subset\overline{\Omega}_j
 \subset\Omega_{j+1}\subset\dots, \\
  \op{meas}(\Omega_j)<\infty\quad\text{for all}\quad j\in\N_0,\\
 \psi_j\equiv 1\ \text{on}\ \Omega_j,\quad \supp
\psi_j\subset\Omega_{j+1}.
 \label{4.5.14}
 \end{gathered}
 \end{equation}
Indeed, $\psi_k\in W^{\smlnt{s_k}}_2(\Rn)\hookrightarrow
F^{\Mi,s_k}_{2,2}(\Rn)$
since $\op{meas}(\Omega_{k+1})<\infty$. Thus we have

\begin{cor} \label{subset-cor}
When $\Omega\subset\Rn$ is open with $\op{meas}(\Omega)<\infty$ and
when \eqref{4.5.14} holds for some $\psi_j$ and $\Omega_j$, 
then Lemma~\ref{reg-lem} holds for $\Omega$.
\end{cor}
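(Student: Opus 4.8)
The plan is to repeat the proof of Lemma~\ref{reg-lem} word for word, replacing the nested bounded sets and the compactly supported cutoffs constructed there by the sets $\Omega_j$ and the functions $\psi_j$ supplied by hypothesis \eqref{4.5.14}. Scanning that proof, boundedness of $\Omega$ entered at a single point: it guaranteed that the cutoffs $\psi_j$, then chosen in $C^\infty_0(\Rn)$, lie in the anisotropic spaces $F^{\Mi,s_k}_{2,2}(\Rn)$ that serve as the first factor in the para-multiplication estimate \eqref{4.5.11}. Everything else---the restriction identity, the iteration, and the final appeal to the embeddings of Section~\ref{embd-ssect}---is insensitive to whether $\Omega$ is bounded or merely of finite measure. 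Hence the whole matter reduces to re-establishing $\psi_j\in F^{\Mi,s_k}_{2,2}(\Rn)$ under the present weaker hypothesis.

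First I would check this membership for arbitrarily large smoothness $s_k$. By \eqref{4.5.14} the function $\psi_j$ is smooth with bounded derivatives and has support in $\Omega_{j+1}$, where $\op{meas}(\Omega_{j+1})<\infty$; consequently every $D^\alpha\psi_j$ is bounded and supported in a set of finite measure, hence lies in $L_2(\Rn)$. Thus $\psi_j\in W^m_2(\Rn)=F^m_{2,2}(\Rn)$ for every $m\in\N_0$. Since $[\xi]\le|\xi|$ once $[\xi]\ge1$ (because each $m_k\ge1$), the isotropic space embeds into the anisotropic one as soon as the isotropic order dominates, so $F^{\smlnt{s_k}}_{2,2}(\Rn)\hookrightarrow F^{\Mi,s_k}_{2,2}(\Rn)$; taking $m=\smlnt{s_k}$ then places $\psi_j$ in $F^{\Mi,s_k}_{2,2}(\Rn)$ with finite norm, which is exactly the property that boundedness of $\Omega$ previously provided for free.

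With the cutoffs controlled, the iteration runs exactly as in Lemma~\ref{reg-lem}: for $v\in F^{\Mi,s}_{r,q}(\overline\Omega)$ and a lift $w$ with $r_\Omega w=v$ one still has $\psi_k\cdots\psi_0 w=\psi_0 w$ and hence $r_\Omega(\psi_k\cdots\psi_0 w)=v$, while repeated use of \eqref{4.5.11} raises the integral-exponent from $r$ to $p$ in finitely many steps, each controlled by $c\,\norm{\psi_k}{F^{\Mi,s_k}_{2,2}}\cdots\norm{\psi_0}{F^{\Mi,s_0}_{2,2}}\norm{v}{F^{\Mi,s}_{r,q}(\overline\Omega)}$ as in \eqref{4.5.13}; the Besov case with $t=s$ and the passage to the full parameter range via Section~\ref{embd-ssect} are identical. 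The one point demanding care---and the only place where the finite-measure hypothesis does real work in lieu of boundedness---is the membership $\psi_j\in F^{\Mi,s_k}_{2,2}(\Rn)$ of the preceding paragraph; once it is in hand, the conclusion of Lemma~\ref{reg-lem} transfers without further change.
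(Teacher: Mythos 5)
Your proposal is correct and follows the paper's own route: the paper likewise observes that the only role of boundedness in Lemma~\ref{reg-lem} is to place the cutoffs in the spaces $F^{\Mi,s_k}_{2,2}(\Rn)$, and replaces it by the observation that $\psi_k\in W^{\smlnt{s_k}}_2(\Rn)\hookrightarrow F^{\Mi,s_k}_{2,2}(\Rn)$ because $\op{meas}(\Omega_{k+1})<\infty$. Your extra verification of the embedding $W^{\smlnt{s_k}}_2\hookrightarrow F^{\Mi,s_k}_{2,2}$ via $[\xi]\le|\xi|$ for $[\xi]\ge1$ is a valid filling-in of a step the paper takes for granted.
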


\section{Applications}  \label{appl-sect}

Already in Lemma~\ref{reg-lem} ff.\ above there is a theoretical
application of the results for $p_0\ne p_1$, cf.~\eqref{4.5.11}.

For boundary problems for non-linear partial differential equations
Theorem~\ref{subset-thm} is a tool, which allows one to treat, say,
products in $B^{\Mi,s}_{p,q}$ and $F^{\Mi,s}_{p,q}$ spaces. 
For the particular cases with $p_0=p_1=p_2$ one can hereby apply the
$\Rn$-results in \cite[Thm.~6.1]{Y1} to such problems. In the author's thesis
\cite{JJ93} this approach has been used for the stationary
Navier--Stokes equations. In particular the non-linear terms are
estimated in the $B^{s}_{p,q}$ and $F^{s}_{p,q}$ spaces in this way.

Moreover, a rather satisfactory set of {\em 
regularity properties\/} for these equations have been deduced there,
and in the particular case of boundary conditions of {\em class 2\/}
(similar to those in \cite{GS2}), the product results for $p_0=p_1\ne
p_2$ enter in a {\em decisive\/} way. See \cite[Thm.~5.5.3]{JJ93}
for further details. 

The general anisotropic results with $\M\ne(1,\dots,1)$ are applicable to the
time-dependent Navier--Stokes equations, cf.~\cite{GS2} and \cite{G5},
and to other non-linear parabolic problems, that are considered on a
cylinder like $\Omega\times\,]0,T[\,$ with the time variable running
in the interval $\,]0,T[$. For such problems it is most convenient that the
open sets in Definition~\ref{subset-defn} and Theorem~\ref{subset-thm}
are allowed to have non-smooth boundaries. 

The $BF\bigdot$ and $FB\bigdot$ cases, that are only given a minimal
treatment on $\Rn$ here, should be relevant for differentials of
non-linear operators, and hence for certain stability questions.
As an example one could take the differential $d\pi$ of the product 
$\pi(\cdot,\cdot)$ at a point $u\in B^{\Mi,s}_{p,q}$; this is the 
linear operator $v\mapsto d\pi(u)v=\pi(u,v)+\pi(v,u)$, that may act between
$F^{\Mi,s}_{p,q}$ spaces.

%

%\bibliography{referencer}
%\bibliographystyle{amsalpha}
%\input{mlta.bbl}

\providecommand{\bysame}{\leavevmode\hbox to3em{\hrulefill}\thinspace}
\providecommand{\MR}{\relax\ifhmode\unskip\space\fi MR }
% \MRhref is called by the amsart/book/proc definition of \MR.
\providecommand{\MRhref}[2]{%
  \href{http://www.ams.org/mathscinet-getitem?mr=#1}{#2}
}
\providecommand{\href}[2]{#2}

\end{document}